\documentclass[10pt]{amsart}

\usepackage{geometry,graphicx,amssymb,amsmath,amsbsy,eucal,amsfonts,mathrsfs,amscd,bm,esint,yhmath}
\usepackage[all]{xy}
\usepackage{tikz}
\usetikzlibrary{arrows,shapes}
\usetikzlibrary{arrows, decorations.markings,fit}
\usetikzlibrary{calc,3d}
\usepackage{tikz-3dplot}
\usetikzlibrary{positioning, quotes}

\tikzset{
    >=stealth',
    punkt/.style={
           rectangle,
           rounded corners,
           draw=black, very thick,
           text width=6.5em,
           minimum height=2em,
           text centered},
    pil/.style={
           ->,
           thick,
           shorten <=2pt,
           shorten >=2pt,}
}

\usepackage{listings}

\geometry{
    letterpaper,
    left=   1.2in,
    right=  1.2in,
    top=    1.25in,
    bottom= 1.25in
}
\linespread{1.}

\numberwithin{equation}{section}

\allowdisplaybreaks[3]

\newtheorem{theorem}{Theorem}[section]
\newtheorem{lemma}[theorem]{Lemma}

\newtheorem{proposition}[theorem]{Proposition}
\theoremstyle{definition}
\newtheorem{definition}[theorem]{Definition}
\theoremstyle{remark}
\newtheorem{remark}[theorem]{Remark}

\newcommand{\p}{{\partial}}

\newcommand{\nab}{\nabla}

\newcommand{\Div}{{\rm div}\,}

\newcommand{\pol}{\EuScript{P}}
\newcommand{\bpol}{\boldsymbol{\pol}}
\newcommand{\bld}[1]{\boldsymbol{#1}}

\newcommand{\bI}{\bld{I}}

\newcommand{\bv}{\bld{v}}
\newcommand{\bw}{\bld{w}}

\newcommand{\bp}{\bld{p}}
\newcommand{\bn}{\bld{n}}
\newcommand{\bu}{\bld{u}}

\newcommand{\bV}{\bld{V}}

\newcommand{\bPi}{\bld{\Pi}}

\newcommand{\bH}{\bld{H}}

\newcommand{\bx}{\bld{x}}
\newcommand{\bC}{\bld{C}}
\newcommand{\bL}{\bld{L}}

\newcommand{\bX}{{\bm X}}

\newcommand{\balpha}{{\bm \alpha}}

\newcommand{\calV}{\mathcal{V}}

\newcommand{\bbR}{\mathbb{R}}

\newcommand{\calS}{\mathcal{S}}
\newcommand{\calT}{\mathcal{T}}

\newcommand{\bnu}{{\bm \nu}}
\newcommand{\calP}{\mathcal{P}}
\newcommand{\calM}{\mathcal{M}}

\newcommand{\bell}{{\bm \ell}}

\newcommand{\pt}{\wideparen}

\makeatletter
\def\widebreve{\mathpalette\wide@breve}
\def\wide@breve#1#2{\sbox\z@{$#1#2$}%
     \mathop{\vbox{\m@th\ialign{##\crcr
\kern0.08em\brevefill#1{0.8\wd\z@}\crcr\noalign{\nointerlineskip}%
                    $\hss#1#2\hss$\crcr}}}\limits}
\def\brevefill#1#2{$\m@th\sbox\tw@{$#1($}%
  \hss\resizebox{#2}{\wd\tw@}{\rotatebox[origin=c]{90}{\upshape(}}\hss$}
\makeatletter

\newcommand{\ipt}{\breve}
\newcommand{\iptW}{\widebreve}

\title[FEM for surface Stokes]{A tangential and penalty-free finite element 
method for the surface Stokes problem}\thanks{The first author was partially supported by NSF grant DMS-2012326.  The second author 
was partially supported by NSF grants DMS-2011733 and DMS-2309425}

\author[A. Demlow and M. Neilan]{
Alan Demlow\address{Department of Mathematics, Texas A\&M University, College Station, TX, 77843}
\email{demlow@math.tamu.edu}
\and{Michael Neilan}
\address{Department of Mathematics, University of Pittsburgh, Pittsburgh, PA 15260}
\email{neilan@pitt.edu}}

\subjclass[2000]{65N12, 65N15, 65N30}
\keywords{surface Stokes equation; finite element method; MINI element}

\begin{document}

\maketitle
\begin{abstract}  
Surface Stokes and Navier-Stokes equations are used to model fluid flow on surfaces.  They have attracted 
significant recent attention in the numerical analysis literature because approximation of their solutions poses significant challenges 
not encountered in the Euclidean context.  One challenge comes from the need to simultaneously enforce tangentiality and 
$H^1$ conformity (continuity) of discrete vector fields used to approximate solutions in the velocity-pressure formulation.  
Existing methods in the literature all enforce one of these two constraints weakly either by penalization or by use of Lagrange multipliers.   
Missing so far is a robust and systematic construction of surface Stokes finite element spaces which employ 
nodal degrees of freedom, including MINI, Taylor-Hood, Scott-Vogelius, and other composite elements which can lead to 
divergence-conforming or pressure-robust discretizations.  In this paper we construct surface MINI spaces whose velocity fields are {tangential}.  
They are not $H^1$-conforming, but do lie in $H({\rm div})$ and do not require penalization 
to achieve optimal convergence rates.   We prove stability and optimal{-order} energy-norm convergence of the method and demonstrate optimal{-order} 
{convergence} of the velocity field in $L_2$ via numerical experiments.   The core advance in the paper is the 
construction of nodal degrees of freedom for the velocity field.  This technique also may be used to construct surface counterparts to 
many other standard Euclidean Stokes spaces, and we accordingly present numerical experiments indicating optimal{-order} convergence 
of nonconforming tangential surface Taylor-Hood $\mathbb{P}^2-\mathbb{P}^1$ elements.  
\end{abstract}

\pagestyle{myheadings}
\thispagestyle{plain}

\section{Introduction}

In this paper, we consider the surface Stokes problem:
\begin{subequations}
\label{eqn:Stokes}
\begin{alignat}{2}
\label{eqn:Stokes1}
-\bPi {\rm div}_\gamma {\rm Def}_\gamma \bu + \nab_\gamma p +\bu & = {\bm f}\qquad \text{on }\gamma,\\
\label{eqn:Stokes2}
{\rm div}_\gamma \bu & = 0\qquad \text{on }\gamma.
\end{alignat}
\end{subequations}
Here, $\gamma\subset \bbR^3$
 is a smooth and connected two-dimensional surface with outward unit
 normal $\bnu$,
 $\bPi = {\bf I}-\bnu\otimes \bnu$ is the orthogonal projection
 onto the tangent space of $\gamma$,
 and $\nab_\gamma$ and ${\rm div}_\gamma$ are the surface
 gradient and surface divergence operators, respectively.
 Furthermore,
 ${\rm Def}_\gamma$ is the tangential deformation operator,
 and the forcing function ${\bm f}$ is assumed to be tangential
 to the surface to ensure well-posedness.  Further assumptions
 and notation are given in Section \ref{sec-notation};  
 {cf.~\cite{JankuhnEtal18} for derivation of {the} surface Stokes {problem} and related models and further discussion of their properties.}
 The system of equations \eqref{eqn:Stokes} is subject to the
 tangential velocity constraint $\bu\cdot \bnu = 0$.
To address degeneracies related to Killing fields, i.e., non-trivial tangential vector fields in 
the kernel of ${\rm Def}_\gamma$, we include a zeroth-order mass term in 
the momentum equations \eqref{eqn:Stokes1} (cf.~Remark \ref{rem-Killing}).

We consider surface finite element methods (SFEMs),
a natural methodology mimicking the  variational formulation and built upon classical
Galerkin principles.  In this approach
the domain $\gamma$ is approximated by a polyhedral (or higher-order) surface $\Gamma_h$
whose faces constitute the finite element mesh. 
Similar to the Euclidean setting,
SFEMs for the surface Stokes problem based on the standard velocity-pressure formulation
must use compatible discrete spaces. 
Specifically, a discrete inf-sup condition must be satisfied. 
Given that SFEMs 
 utilize the same framework as their Euclidean counterparts, 
employing mappings via affine or polynomial diffeomorphisms, 
one may anticipate that  numerous 
classical inf-sup stable Stokes pairs 
can be adapted to their surface analogues, 
readily enabling the construction of stable SFEMs for \eqref{eqn:Stokes}.

However, the tangential velocity constraint poses a significant hurdle
to constructing stable and convergent SFEMs.  As $\Gamma_h$ is merely Lipschitz continuous,
its outward unit normal is discontinuous at mesh edges and vertices.
As a result, the tangential projection 
of continuous, piecewise smooth functions
does not lead to $\bH^1$-conforming functions.  Moreover,
there do not exist canonical, degree-of-freedom-preserving pullbacks for 
tangential $\bH^1$ vector fields, in particular, the Piola
transform preserves tangentiality and in-plane normal continuity, but not in-plane tangential continuity.
Finally, a continuous, tangential, and piecewise smooth vector field
on $\Gamma_h$ must necessarily vanish on mesh corners except in exceptional cases where all incident triangles are coplanar.
Indeed, at a mesh corner there are at least three
faces emanating from a common vertex, whose outward unit normal vectors
span $\mathbb{R}^3$.  Therefore tangentiality of a continuous vector field 
with respect to each of the three planes implies that it vanishes at the vertex.
Thus any piecewise polynomial space simultaneously satisfying both
tangentiality and continuity exhibits a locking-type phenomenon with
poor approximation properties.


There is a substantial recent literature on numerical approximation of the surface Stokes and related 
problems such as the surface vector Laplace {equation}.  Most of these circumvent the difficulties described above in one of three ways:  by relaxing the pointwise tangential constraint, by relaxing $\bH^1$-conformity of the finite element space, or by using a different formulation of the {surface} Stokes problem.   
For the former, one can weakly impose the tangential constraint via penalization or 
Lagrange multipliers \cite{Fries18,GJOR18,OQRY18,Maxim19,HansboLarsonLarsson20,JORZ21,BJPRV22}.
In principle, this allows one to use inf-sup stable Euclidean Stokes pairs to solve
the analogous surface problem. However, this methodology
requires superfluous degrees of freedom, as the velocity space is approximated by arbitrary vectors in 
$\mathbb{R}^3$ rather than tangential vectors.  In addition an unnatural high-order geometric approximation of the unit normal
of the true surface is needed to obtain optimal-order approximations.  Therefore for problems
in which full information of the exact surface is unknown (e.g., free-boundary problem), these penalization schemes
lead to SFEMs with sub-optimal convergence properties. However, it was shown recently 
in \cite{HP23} that the tangential component of the solution converges optimally for a standard isoparametric
geometry approximation in most cases assuming a correct choice of penalty parameters.  The only exception is the case where tangential $L_2$ errors are considered along with affine (polyhedral) surface approximations.
Alternatively, one may relax $\bH^1$-conformity and use finite element trial and test
functions that are not continuous on the discrete surface $\Gamma_h$. 
In this direction, SFEMs utilizing tangentially- and $\bH({\rm div})$-conforming finite element spaces
such as Raviart-Thomas and Brezzi-Douglas-Marini combined 
with discontinuous Galerkin techniques are proposed and analyzed in  \cite{SurfaceStokes1,SurfaceStokes2}; cf.~\cite{CockburnEtal05} for similar methods for Euclidean Stokes equations.  
Here, additional consistency, symmetry, and stability terms are added to the method. These terms add some complexity to the 
implementation, especially for higher-order surface approximations, but are standard in the context of discontinuous Galerkin methods.
Optimal-order convergence is observed experimentally for a standard SFEM formulation that does not require higher-order approximations of any geometric information.  
Discretizations of stream function formulations of the surface Stokes equations have also appeared in the literature \cite{NVW12,Re20,BR20,BJPRV22}.  However, as with methods weakly enforcing tangentiality, they require higher-order approximation to the surface normal and in addition require computation of curvature information which can in and of itself be a challenging problem. 
 {These methods are} also restricted to simply connected surfaces.  As a final note, {\it trace SFEMs}, in which discretizations of surface PDE are formulated with respect to a background 3D mesh and a corresponding 3D finite element space, are especially important in the context of dynamic surface fluid computations.  Trace formulations are well-developed for $H^1$ conforming/tangentially nonconforming methods and stream function formulations, but have not yet appeared for $\bH({\rm div})$-conforming methods.  

In this paper, we design {a SFEM} for the surface Stokes problem \eqref{eqn:Stokes}
using a strongly tangential finite element space that is based 
on {a} conforming, inf-sup stable Euclidean {pair}. 
The method is based on the standard variational formulation for the Stokes problem
and does not require additional consistency terms or extrinsic penalization.
As far as we are aware, this is the first {SFEM} for the surface Stokes problem
with these properties.
The key issue that we address is the assignment of degrees of freedom (DOFs)
of {tangential} vector fields at Lagrange nodes, in particular,
at vertices of the surface triangulation. 

To expand on this last point and to describe our proposed approach, 
consider a vertex/DOF, call $a$, of the triangulation of the discrete geometry
approximation $\Gamma_h$, and let $\calT_a$ denote the set of faces 
in the triangulation that have $a$ as a vertex.
We wish to interpret and define the values
of tangential vector fields forming our finite element space at this vertex
in a way that ensures the resulting discrete spaces
have desirable approximation and weak-continuity properties.
As the mesh elements in $\calT_h$ generally
lie in different planes, it is immediate that such vector fields
are generally multi-valued at $a$.

Let $\bp =\bp_\gamma$ denote the closest point projection onto $\gamma$,
and note that, because $\gamma$ is smooth, continuous and tangential vector fields
are well-defined and single-valued at $\bp(a)$.  Thus, 
as the Piola transform preserves tangentiality,
a natural assignment is to construct finite element functions $\bv$
with the property $\bv|_{K}(a) = \calP_{\bp^{-1}}\pt{\bv}\big|_{\bp(K)}\ \forall K\in \calT_a$
for some vector field $\pt{\bv}$ tangent at $\bp(a)$, where $\calP_{\bp^{-1}}$ is the Piola transform
of the inverse mapping $\bp^{-1}:\gamma\to \Gamma_h$; see Figure \ref{fig1}.
Imposing this condition on Lagrange finite element DOFs likely
leads to the sought out approximation and weak-continuity properties,
and thus, conceptually may lead to convergent SFEMs for \eqref{eqn:Stokes}.
However, the implementation of the resulting finite element method
requires explicit information about the exact surface $\gamma$ and its closest 
point projection.  Therefore this construction is of little practical value.

Instead of this idealized construction, 
we fix an arbitrary face $K_a\in \calT_a$.
Given the value $\bv|_{K_a}(a)$ and $K\in \calT_a$, we then assign
$\bv|_{K}(a) = \pol_{\bp^{-1}_{K_a}} \bv|_{K_a}(a)$, the Piola transform of  $\bv|_{K_a}(a)$
with respect to the inverse of the closest point projection onto the plane containing $K_a$; see Figure \ref{fig2}.
 This transform is linear with a relatively simple formula (cf.~Definition \ref{def:MaK}), and 
 it  only uses geometric information from $\Gamma_h$.  Moreover,
 we show that this construction is only an $O(h^2)$ perturbation from the idealized setting.
 As a result, the constructed finite element spaces possess sufficient weak continuity properties
to ensure that the resulting scheme is convergent for the surface Stokes problem \eqref{eqn:Stokes}.

To clearly communicate the main ideas and to keep technicalities at minimum, we focus
on a polyhedral approximation to $\gamma$ and on
the lowest-order MINI pair, which in the Euclidean setting
takes the discrete velocity space to be the (vector-valued) linear Lagrange space
enriched with cubic bubbles, and the discrete pressure space to be
the (scalar) Lagrange space. We expect the main ideas to be applicable
to other finite element pairs  (e.g, Taylor--Hood, Scott-Vogelius \cite{ScottVogelius85,JohnEtal17,TaylorHood73,BoffiEtalBook}),
although the stability must be shown on a case-by-case basis.  Below we 
{present numerical experiments demonstrating the viability of our approach for 
$\mathbb{P}^2$ surface approximations paired with a $\mathbb{P}^2-\mathbb{P}^1$ Taylor-Hood finite element space} and plan to address
generalizations of our approach more fully in future works.

The rest of the paper is organized as follows.
In the next section, we introduce the notation and provide
some preliminary results.  In Section \ref{sec-FE},
we define the surface finite element spaces based on the classical
MINI pair.  Here, we  show that the spaces have optimal-order approximation
properties and are inf-sup stable. We also establish weak continuity
properties of the discrete velocity space via an $H^1$-conforming relative
on the true surface.  In Section \ref{sec-FEM}, we define the finite element
space and prove optimal-order estimates in the energy norm. Finally in Section \ref{sec-numerics}
we provide numerical experiments which support the theoretical results.

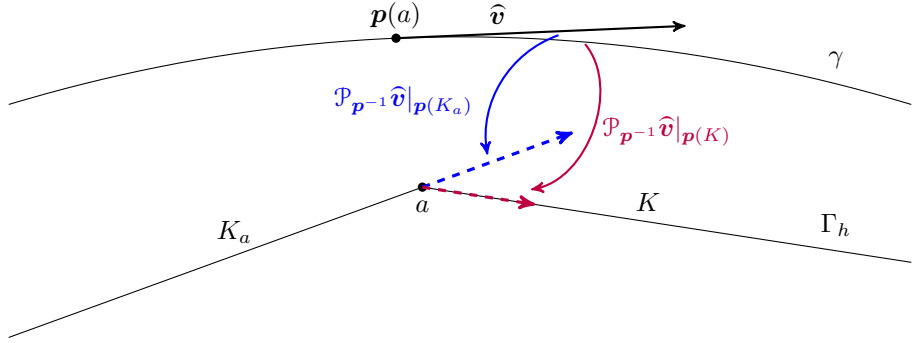
\begin{figure}
\centering
\begin{tikzpicture}

\def\theta{-0.85}
\draw[black, domain=-6:6,smooth,variable=\t]plot (\t,-0.025*\t*\t);
\draw (5,-0.3) node {$\gamma$};  
\node[inner sep = 0pt,minimum size=3.5pt,fill=black!100,circle] (n2) at (\theta,-0.025*\theta*\theta)  {};
 \draw (\theta,-0.025*\theta*\theta+0.3) node {$\bp(a)$};  

\draw[->,thick](\theta,{-0.05*\theta*(\theta-\theta)-0.025*\theta*\theta})--(3,{-0.05*\theta*(3-\theta)-0.025*\theta*\theta});
\draw (0.5,0.3) node {$\pt{\bv}$};

\draw[-](-6,-4)--(-0.5,-2); %
\draw[-](6,-3)--(-0.5,-2);
\draw (5,-2.5) node {$\Gamma_h$};  
\node[inner sep = 0pt,minimum size=3.5pt,fill=black!100,circle] (n3) at (-0.5,-2)  {};
 \draw (-0.5,-2.25) node {$a$};  
\draw (2.5,-2.2) node {$K$};
\draw (-3,-2.6) node {$K_a$};

\draw[->,very thick,dashed,blue](-0.5,-2)--(1.5,{(2/5.5)*(1.5+0.5)-2});
\draw[->,very thick,dashed,purple](-0.5,-2)--(1,{(-1/6.5)*(1+0.5)-2});

\node (phys1) at (.5,{(2/5.5)*(.5+0.5)-2}) {}; 
\node (ref)  at (1.5,{-0.05*\theta*(1.5-\theta)-0.025*\theta*\theta}) {}
edge[pil,bend right=40,blue] (phys1.west);
 
\node (phys2) at (0.75,{(-1/6.5)*(0.75+0.5)-2+0.15}) {}; 
\node (ref)  at (1.5,{-0.05*\theta*(1.5-\theta)-0.025*\theta*\theta}) {}
edge[pil,bend left=60,purple] (phys2.east);
 
\draw[purple] (2.75,-1.25) node {$\calP_{\bp^{-1}}\pt{\bv}|_{\bp(K)}$};
\draw[blue] (-0.75,-0.85) node {$\calP_{\bp^{-1}}\pt{\bv}|_{\bp(K_a)}$};

\end{tikzpicture}
\caption{\label{fig1}A pictorial description of an idealized assignment of nodal values on a one-dimensional surface.
Here, a tangential vector $\pt{\bv}$ at the point $\bp(a)$ is mapped to each element $K$
via the Piola transform with respect to the inverse mapping $\bp^{-1}|_K$.
}
\end{figure}

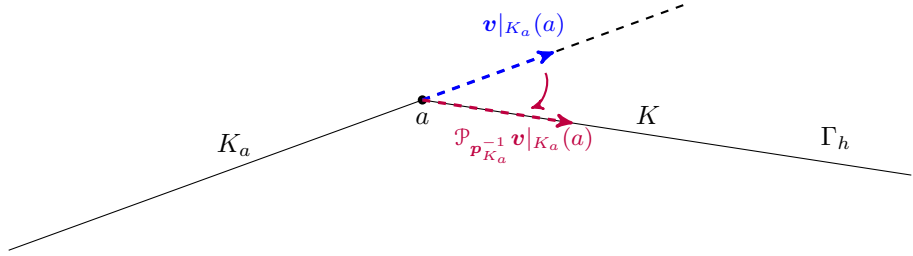
\begin{figure}
\centering
\begin{tikzpicture}

\draw[-](-6,-4)--(-0.5,-2);
\draw[-](6,-3)--(-0.5,-2);
\draw (5,-2.5) node {$\Gamma_h$};  
 \draw (-0.5,-2.25) node {$a$};  
 \node[inner sep = 0pt,minimum size=3.5pt,fill=black!100,circle] (n3) at (-0.5,-2)  {};
 \draw (2.5,-2.2) node {$K$};
\draw (-3,-2.6) node {$K_a$};

\draw[-,dashed,thick](-0.5,-2)--(3,{(2/5.5)*(3+0.5)-2});

\draw[->,very thick,dashed,blue](-0.5,-2)--(1.25,{(2/5.5)*(1.25+0.5)-2});
\draw[->,very thick,dashed,purple](-0.5,-2)--(1.5,{(-1/6.5)*(1.5+0.5)-2});

 \node (phys) at (1,{(-1/6.5)*(1+0.5)-2+0.1}) {};
\node (ref)  at (1.,{(2/5.5)*(1.+0.5)-2}) {}
edge[pil,bend left=50,purple] (phys.west);

\draw[blue] (0.85,-1) node {{\small{$\bv|_{K_a}(a)$}}};
\draw[purple] (0.85,-2.6) node {{\small{$\calP_{\bp_{K_a}^{-1}} \bv|_{K_a}(a)$}}};

\end{tikzpicture}
\caption{\label{fig2}
A pictorial description of our construction on a one-dimensional surface.
The value of a vector field $\bv$ at vertex $a$ restricted to $K_a$
is mapped to $K$ via the Piola transform with respect to the inverse
of the closest point projection onto the plane containing $K_a$.
}
\end{figure}

\section{Notation and Preliminaries}\label{sec-notation}
We assume $\gamma\subset \bbR^3$
is a smooth, connected, and orientable two-dimensional surface
without boundary.  The signed distance function 
of $\gamma$ is denoted by $d$, which
satisfies $d<0$ in the interior of $\gamma$ and $d>0$ in the 
exterior.
We set $\bnu(x) = \nab d(x)$ to be the outward-pointing unit normal 
(where the gradient is understood as a column vector)
and ${\bf H}(x) = D^2 d(x)$ the Weingarten map.
The tangential projection operator
is $\bPi = {\bf I} - \bnu\otimes \bnu$,
where ${\bf I}$ is the $3\times 3$ identity matrix,
and the outer product
of two vectors ${\bm a}$ and ${\bm b}$ satisfies
$({\bm a}\otimes {\bm b})_{i,j} = a_i b_j$.
The smoothness of $\gamma$ ensures
the existence of $\delta>0$ sufficiently small such 
that the closest point projection
\[ 
\bp(x) := x-d(x)\bnu(x)
\]
is well defined in
the tubular region
$U = \{x\in \bbR^3:\ {\rm dist}(x,\gamma)\le \delta\}$.

For a scalar function $q:\gamma\to \bbR$ 
we define its extension
$q^e:U\to \bbR$ via $q^e = q\circ \bp$.
Likewise, for $\bv = (v_1,v_2,v_3)^\intercal :\gamma \to \bbR^3$
its extension $\bv^e:U\to \bbR^3$ satisfies
$(\bv^e)_i = v_i^e$ for $i=1,2,3$.
Define the surface gradient $\nab_\gamma q = \bPi \nab q^e$,
and for a (column) vector field $\bv = (v_1,v_2,v_3)^\intercal :\gamma \to \bbR^3$,
we let $\nab \bv^e = (\nab v_1^e,\nab v_2^e,\nab v_3^e)^\intercal $
denote the Jacobian matrix of $\bv^e$.  
We then see $(\nab \bv^e \bPi)_{i,:} = ((\nab v^e_i)^\intercal  \bPi) = (\bPi \nab v^e_i)^\intercal = (\nabla_\gamma v_i)^\intercal$,
i.e., the $ith$ row of $\nab \bv^e \bPi$ coincides with $(\nabla_\gamma v_i)^\intercal$.
The tangential surface gradient (covariant derivative) of $\bv$ is
defined by $\nab_\gamma \bv = \bPi \nab \bv^e \bPi$,
and the surface divergence operator of $\bv$
is ${\rm div}_\gamma \bv = {\rm tr}(\nab_\gamma \bv)$.
The deformation of a tangential vector field
is defined as the symmetric part of its surface gradient, i.e.,
\[
{\rm Def}_\gamma \bv = \frac12 \big(\nab_\gamma \bv+(\nab_\gamma \bv)^\intercal\big).
\]
For a matrix field ${\bf A}:\bbR^{3\times 3}$, the divergence
${\rm div}_\gamma {\bf A}$ is understood to act row-wise.

Let $L_2(\gamma)$ denote the space of square-integrable
functions on $\gamma$ and let $\mathring{L}_2(\gamma)$
be the subspace of $L_2(\gamma)$ 
consisting of $L_2$-functions with vanishing mean.
We let $W_p^m(\gamma)$ be the Sobolev space
of order $m$ and exponent $p$ on $\gamma$ with corresponding norm $\|\cdot\|_{W^{m}_p(\gamma)}$.
We use the  notation $H^m(\gamma) = W_2^m(\gamma)$ with $\|\cdot\|_{H^m(\gamma)} = \|\cdot \|_{W^m_2(\gamma)}$,
and the convention $|\cdot|_{H^0} = \|\cdot\|_{L_2}$, $|\cdot|_{W^0_p} = \|\cdot\|_{L_p}$.
Analogous vector-valued spaces are denoted in boldface (e.g., $\bL_2(\gamma) = (L_2(\gamma))^3$
and $\bH^1(\gamma) = (H^1(\gamma))^3$).
We let $\bH_T^1(\gamma)$
be the subspace of $\bH^1(\gamma)$ whose members
are tangent to $\gamma$, and set
\[
\bH({\rm div}_\gamma;\gamma) = \{\bv\in \bL_2(\gamma):\ {\rm div}_\gamma \bv\in L_2(\gamma)\}.
\]

Let $\Gamma_h$ be a  polyhedral surface
approximation of $\gamma$ with triangular faces.
We assume that $\Gamma_h$ is an $O(h^2)$ 
approximation in the sense that $d(x) = O(h^2)$
for all $x\in \Gamma_h$. We further assume
 $h$ is sufficiently small to ensure $\Gamma_h\subset U$, in particular,
the closest point projection is well-defined on $\Gamma_h$.
We denote by $\calT_h$ {the} set of faces of $\Gamma_h$,
and assume this triangulation is shape-regular (i.e.,
the ratio of the diameters of the inscribed and circumscribed 
circles of each face is uniformly bounded). 
For simplicity and to ease the presentation,
we further assume that $\calT_h$ is quasi-uniform, i.e., 
 $h:=\max_{K'} {\rm diam}(K') \approx {\rm diam}(K)$
 for all $K\in \calT_h$.  
  The image of the mesh elements and the resulting set on the exact surface are given, respectively, by 
\[
K^\gamma = \bp(K),\qquad \calT_h^\gamma = \{\bp( K):\  K\in  \calT_h\}. 
\]
  
We use the notation $a\lesssim b$ (resp., $a\gtrsim b$) if
there exists a constant $C>0$ independent of the mesh parameter $h$
such that $a\le C b$ (resp., $a\ge C b$).  {The statement
$a\approx b$ means $a\lesssim b$ and $a\gtrsim b$.}

 Set $\calV_h$ to be the set of vertices in $\calT_h$,
 and for each $K\in \calT_h$, let $\calV_K$ denote
 the set of three vertices of $K$.
 For each $a\in \calV_h$, let $\calT_a\subset \calT_h$
 denote the set of faces having $a$ as a vertex.
 For $K\in \calT_h$, we define
 the patches
 \[
 \omega_K = \mathop{\bigcup_{K'\in \calT_h}}_{\bar K'\cap \bar K\neq \emptyset} K',\qquad
 \omega_K' = \mathop{\bigcup_{K'\in \calT_h}}_{\bar K'\cap \overline \omega_K\neq \emptyset} K',
 \]
so that $\omega_K\subset \omega_K' \subset \Gamma_h$. 
 The patches $\omega_{K^\gamma}$ and $\omega'_{K^\gamma}$
 associated with $K^\gamma = \bp(K)$ are defined analogously.

 The (piecewise constant) outward unit normal of $\Gamma_h$
 is denoted by $\bnu_h$,
 and we shall use the notation $\bnu_K = \bnu_h|_K\in \bbR^3$,
 its restriction to $K\in \calT_h$.  
We assume that  $|\bnu \circ \bp - \bnu_h|\lesssim h$. 
 The tangential projection
 with respect to $\Gamma_h$ is $\bPi_h = {\bf I}-\bnu_h\otimes \bnu_h$,
 and we assume there exists $c>0$ independent of $h$ such that 
$\bnu \cdot \bnu_h\ge c>0$ on $\Gamma_h$.
We let $\mu_h(x)$ satisfy $\mu_h d\sigma_h(x) = d\sigma(\bp(x))$,
where $d\sigma$ and $d\sigma_h$ are surface measures
of $\gamma$ and $\Gamma_h$, respectively.  In particular,
\[
\int_{\Gamma_h} (q\circ \bp)\mu_h = \int_\gamma q\qquad \forall q\in {L_1(\gamma)}.
\]
From  \cite[Proposition 2.5]{Demlow09},
we have 
\begin{equation}\label{eqn:muForm}
\mu_h(x) = \bnu(x)\cdot \bnu_h(x) \prod_{i=1}^2 (1-d(x)\kappa_i(x))\qquad x\in \Gamma_h,
\end{equation}
and  
\begin{equation}\label{eqn:muBound}
|1-\mu_h(x)|\lesssim h^2,
\end{equation}
where $\{\kappa_1,\kappa_2\}$ are the eigenvalues
of ${\bf H}$, whose corresponding eigenvectors are orthogonal to $\bnu$.
We set $\mu_K = \mu_h|_K$ to be the restriction of $\mu_h$ to $K\in \calT_h$.

Surface differential operators with respect to $\Gamma_h$
are denoted and defined analogously to those on $\gamma$.
We also set  ($m\in \mathbb{N}$)
\[
\bH^m_h(\Gamma_h) = \{\bv\in \bL_2(\Gamma_h):\ \bv|_K\in \bH^m(K)\ \forall K\in \calT_h\},\qquad \|\bv\|_{H^m_h(K)}^2 = \sum_{K\in \calT_h} \|\bv\|_{H^m(K)}^2
\]
to be the piecewise $H^m$ Sobolev space and norm, respectively.
Likewise,  $\bH^m_h(\gamma)$ is the piecewise
Sobolev space with respect to $\calT_h^\gamma$ with
corresponding norm $\|\bv\|_{H^m_h(\gamma)}^2 = \sum_{K\in \calT_h} \|\bv\|_{H^m(K^\gamma)}^2$,
and ${\rm Def}_{\gamma,h}$ denotes the piecewise deformation operator with respect to $\calT_h^\gamma$.

We end this section by stating a well-known characterization of 
$\bH({\rm div}_{\Gamma_h};\Gamma_h) =\{\bv\in \bL_2(\Gamma_h):\ {\rm div}_{\Gamma_h} \bv \in L_2(\Gamma_h)\}$.
For each edge $e$ of the mesh, 
denote by $K_1^e,K_2^e\in \calT_h$
the two triangles in the mesh such that $e = \p K_1^e\cap \p K_2^e$.
Let $\bn_j^e$ denote the outward in-plane normal to {$\p K_j^e$},
and note that in general, $\bn^e_1\neq -\bn^e_2$ on $e$.
Then a vector field $\bv\in \bH^1_h(\Gamma_h)$ satisfies
$\bv\in \bH({\rm div}_{\Gamma_h};\Gamma_h)$ if and only if \cite{SurfaceStokes2}
\begin{equation}\label{eqn:HdivProp}
\bv_1\cdot \bn_1^e|_e + \bv_2\cdot \bn_2^e|_e = 0\quad \text{for all edges }e,
\end{equation}
where $\bv_j = \bv|_{K_j^e}$.

\subsection{Extensions and lifts}
For the rest of the paper, 
we view the closest point projection
as a mapping from the discrete surface approximation
to the true surface, i.e.,
$\bp:\Gamma_h\to \gamma$.
Restricted to $\Gamma_h$
the projection is a bijection, 
and in particular has a well-defined
inverse: $\bp^{-1}:\gamma \to \Gamma_h$.
Recall that for a scalar or vector-valued function $q$ on $\gamma$, 
its extension (now to $\Gamma_h$) is $q^e = q\circ \bp$. 
For a scalar or vector-valued function $q$ defined on $\Gamma_h$, we define its lift
via $q^\ell = q\circ \bp^{-1}$. Note that $(q^\ell)^e = q$ on $\gamma$ and likewise, $(q^e)^\ell = q$ on $\Gamma_h$.
For $q\in H^m_h(\gamma)\ (m=0,1,2)$,
there holds
\begin{equation}\label{eqn:extEquivalence}
\|q\|_{H^m(K^\gamma)}\approx \| q^e\|_{H^m(K)}\qquad \forall K\in \calT_h,
\end{equation}
which follows 
from a change of variables, the chain rule, and the smoothness assumptions
of $\gamma$ (cf.~\cite{Dziuk88}).

\subsection{Surface Piola transforms}
Following \cite{Steinmann08,CD16,SurfaceStokes1} we summarize the divergence-conforming 
Piola transform with respect to a mapping between surfaces.
Let $\calS_0$ and $\calS_1$ be two sufficiently smooth surfaces,
and let $\Phi:\calS_0\to \calS_1$ be a diffeomorphism with inverse
$\Phi^{-1}:\calS_1\to \calS_0$.  Let $d\sigma_i$ be
the surface measure of $\calS_i$,
and let $\mu$ formally satisfy $\mu d\sigma_0 = d\sigma_1$.
Then the Piola transform of a vector field $\bv:\calS_0\to \bbR^3$
with respect to $\Phi$
is given by 
\[
(\calP_\Phi \bv)\circ \Phi= \mu^{-1} D\Phi \bv.
\]
Likewise, for $\bv:\calS_1\to \bbR^3$ its Piola transform
with respect to $\Phi^{-1}$ is
\[
(\calP_{\Phi^{-1} \bv)\circ \Phi^{-1} = (\mu\circ \Phi^{-1}) D \Phi^{-1} \bv.}
\]
Similar to the Euclidean setting, there holds
\begin{equation}\label{eqn:DivPhi}
{\rm div}_{\calS_0} \bv = \mu {\rm div}_{\calS_1} \calP_\Phi \bv\qquad \forall \bv\in \bH({\rm div};\calS_0),
\end{equation}
in particular, $\calP_{\Phi}:\bH({\rm div}_{\calS_0};\calS_0)\to \bH({\rm div}_{\calS_1};\calS_1)$
and $\calP_{\Phi^{-1}}:\bH({\rm div}_{\calS_1};\calS_1)\to \bH({\rm div}_{\calS_0};\calS_0)$
are bounded mappings.  Moreover, as $D\Phi$ and $D\Phi^{-1}$
are tangent maps, the Piola transform yields tangential vector fields:
if $\bnu_j$ is the unit normal of the surface
$\calS_j$,
then $(\calP_\Phi \bv)\cdot \bnu_1=0$ on $\calS_1$
and $(\calP_{\Phi^{-1}} \bv)\cdot \bnu_0=0$ on $\calS_0$.

In the case $\Phi = \bp$, $\calS_0 = \Gamma_h$,
and $\calS_1 = \gamma$ (so that $\mu=\mu_h$),
the Piola transform of $\bv:\Gamma_h\to \bbR^3$ with $\bPi_h \bv= \bv$
is  \cite{CD16,SurfaceStokes1}
\begin{equation}\label{eqn:Piola}
\pt \bv \circ \bp := \calP_{\bp}  \bv =  \frac1{\mu_h} \big[\bPi - d {\bf H}\big]  \bv,
\end{equation}
whereas the Piola transform of $\bv:\gamma\to \bbR^3$
with respect to the inverse $\bp^{-1}$ is given by
\begin{equation}\label{eqn:invPiola}
\ipt \bv := \calP_{\bp^{-1}}\bv = \mu_h \Big[{\bf I}- \frac{\bnu\otimes \bnu_h}{\bnu\cdot \bnu_h}\Big][{\bf I}-d {\bf H}]^{-1} (\bv \circ \bp). 
\end{equation}
Note that $\pt{\ipt{\bv}} = \bv$ on $\gamma$ and $\ipt{\pt{\bv}} = \bv$ on $\Gamma_h$.
Moreover, it follows from \eqref{eqn:DivPhi} that for all $K\in \calT_h$,
\begin{subequations}
\label{eqn:PiolaInv1}
\begin{align}
\int_{K^\gamma} ({\rm div}_\gamma \bv) q &= \int_K ({\rm div}_{\Gamma_h} \ipt \bv)  q^e\qquad \forall \bv\in \bH({\rm div}_\gamma;K^\gamma),\ q\in {L_2(K^\gamma)}.
\end{align}
and 
\begin{align}
\int_K ({\rm div}_{\Gamma_h} \bv)  q & = \int_{K^\gamma} ({\rm div}_\gamma \pt \bv) q^\ell \qquad \forall \bv\in \bH({\rm div}_{\Gamma_h};K),\ q\in {L_2(K)}.
\end{align}
\end{subequations}

The following lemma states
 the equivalence of norms of vector fields and their Piola transforms.
%
 \begin{lemma}\label{lem:PiolaNormEquiv}
For $K\in \calT_h$, let $\bv:K^\gamma\to \bbR^3$
and $\ipt{\bv} = \calP_{\bp^{-1}} \bv:K\to \bbR^3$
be related by \eqref{eqn:invPiola} restricted to $K$. 
Then if $\bv\in \bH^m(K^\gamma)$ for some $m\in \{0,1,2\}$, 
there holds $\ipt{\bv}\in H^m(K)$.  Moreover,
%
\begin{equation}\label{eqn:PiolaNormEquiv}
\|\bv\|_{H^m(K^\gamma)}  \approx \|\ipt \bv\|_{H^m(K)}.
\end{equation}
 \end{lemma}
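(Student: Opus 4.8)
The plan is to prove the norm equivalence \eqref{eqn:PiolaNormEquiv} by working on a fixed element $K\in\calT_h$, transferring everything to a reference triangle, and carefully tracking the $h$-dependence of the geometric factors appearing in the explicit formula \eqref{eqn:invPiola}. First I would recall that $\ipt\bv = \calP_{\bp^{-1}}\bv$ is given explicitly by
\[
\ipt{\bv} = \mu_h\,\Big[{\bf I}-\tfrac{\bnu\otimes\bnu_h}{\bnu\cdot\bnu_h}\Big]\,[{\bf I}-d{\bf H}]^{-1}\,(\bv\circ\bp),
\]
so that $\ipt\bv$ is obtained from the pulled-back field $\bv\circ\bp$ by left-multiplication with the matrix field $\bM := \mu_h[{\bf I}-\bnu\otimes\bnu_h/(\bnu\cdot\bnu_h)][{\bf I}-d{\bf H}]^{-1}$. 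The key structural observations are: (i) $\bM$ and its inverse are uniformly bounded on $\Gamma_h$ in $L_\infty$, since $\mu_h\approx 1$ by \eqref{eqn:muBound}, $\bnu\cdot\bnu_h\ge c>0$ by assumption, $|d|\lesssim h^2$ so $[{\bf I}-d{\bf H}]^{-1} = {\bf I}+O(h^2)$, and the projection-type factor has bounded inverse because $\bnu\cdot\bnu_h$ is bounded away from zero; (ii) since $\gamma$ is smooth and $d$, $\bnu$, ${\bf H}$ are smooth functions of $x$ restricted to the (flat) element $K$, together with the $O(h^2)$-closeness of $\Gamma_h$ to $\gamma$, all entries of $\bM$ are smooth on $K$ with $\|\bM\|_{W^m_\infty(K)}\lesssim 1$ and $\|\bM^{-1}\|_{W^m_\infty(K)}\lesssim 1$ for $m=0,1,2$, with constants depending only on $\gamma$.

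Second, for the case $m=0$ the estimate is immediate: a change of variables gives $\|\ipt\bv\|_{L_2(K)}^2 = \int_K |\bM(\bv\circ\bp)|^2 \approx \int_{K^\gamma} |\bv|^2\,\mu_h^{-1} \approx \|\bv\|_{L_2(K^\gamma)}^2$, using \eqref{eqn:muBound}, the $L_\infty$ bounds on $\bM,\bM^{-1}$, and the Jacobian identity relating integration over $K$ and $K^\gamma$; the reverse inequality follows by applying the same reasoning to $\calP_{\bp}$ using \eqref{eqn:Piola}. For $m=1,2$, I would combine the chain rule on the composition $\bv\circ\bp$ with the product rule for multiplication by $\bM$. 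Concretely, for a first derivative, $\nabla_{\Gamma_h}\ipt\bv = (\nabla_{\Gamma_h}\bM)(\bv\circ\bp) + \bM\,\nabla_{\Gamma_h}(\bv\circ\bp)$, and $\nabla_{\Gamma_h}(\bv\circ\bp)$ is controlled by $(\nabla_\gamma\bv)\circ\bp$ times $D\bp$, with $\|D\bp\|_{W^1_\infty(K)}\lesssim 1$; bounding the right-hand side in $L_2(K)$ and changing variables back to $K^\gamma$ yields $\|\ipt\bv\|_{H^1(K)}\lesssim \|\bv\|_{H^1(K^\gamma)}$, and the reverse direction again follows by symmetry via $\calP_{\bp}$. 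The $m=2$ case is the same bookkeeping carried one derivative further, using that $D^2\bp$ and $D^2\bM$ are uniformly bounded on $K$ (again because $\gamma$ is smooth and $d,\bnu,{\bf H}$ inherit uniform bounds on the flat triangle $K$). Here I would invoke \eqref{eqn:extEquivalence} to freely pass between $\|(\nabla^j_\gamma\bv)\circ\bp\|_{L_2(K)}$ and $\|\nabla^j_\gamma\bv\|_{L_2(K^\gamma)}$.

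The main obstacle I anticipate is not any single inequality but the uniform-in-$h$ control of the $W^m_\infty$ norms of the geometric matrix field $\bM$ and of the pullback map $\bp$ restricted to $K$: one must check that differentiating the factors $\mu_h$, $\bnu\cdot\bnu_h$, $d$, and ${\bf H}$ along the flat element $K$ produces no hidden inverse powers of $h$. This is where the hypotheses $|d|\lesssim h^2$, $\mu_h\approx 1$, $\bnu\cdot\bnu_h\ge c$, and the smoothness of $\gamma$ (so that $\bnu$ and ${\bf H}$ and their derivatives are bounded on the tubular neighborhood $U$) are used in an essential way; since $\bnu_h$ is piecewise constant it contributes nothing to derivatives on $K$, and the only genuinely $x$-dependent smooth quantities are evaluated on a set of diameter $O(h)$ sitting within $O(h^2)$ of $\gamma$. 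Once these bounds are in hand, a standard scaling argument on the reference triangle (or a direct product/chain-rule estimate using shape-regularity and quasi-uniformity of $\calT_h$) delivers \eqref{eqn:PiolaNormEquiv} with constants depending only on $\gamma$, its curvature, and the shape-regularity constant of the mesh.
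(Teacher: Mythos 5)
Your proposal is correct and follows essentially the same route as the paper, which for this lemma simply cites \cite{SurfaceStokes1} (Lemma 4.1) for $m=0,1$ and notes that $m=2$ is analogous: write $\ipt{\bv}$ as a uniformly smooth matrix field times the pullback $\bv\circ\bp$, control the $W^m_\infty$ norms of the geometric factors via $|d|\lesssim h^2$, $\mu_h\approx 1$, $\bnu\cdot\bnu_h\ge c$, and the smoothness of $\gamma$, then change variables using \eqref{eqn:extEquivalence}. One small imprecision: the matrix $\bM=\mu_h\bigl[{\bf I}-\bnu\otimes\bnu_h/(\bnu\cdot\bnu_h)\bigr][{\bf I}-d{\bf H}]^{-1}$ annihilates $\bnu$ and hence is not invertible on $\bbR^3$, so the lower bound cannot come from a pointwise bound on $\bM^{-1}$; it must be obtained exactly as in your fallback, by applying the same argument to $\calP_{\bp}$ from \eqref{eqn:Piola} and using that $\bv$ is (implicitly) tangential so that $\pt{\ipt{\bv}}=\bv$.
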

 \begin{proof}
The proof for the cases $m=0,1$ is found in \cite[Lemma 4.1]{SurfaceStokes1}.
The case $m=2$ follows from similar arguments and is therefore omitted.
 \end{proof}

We also need a similar result
that relates the ${L_2}$ norm
of the deformation tensors
of $\bv$ and its Piola transform $\pt{\bv}$.
The proof of the following result
is given in the appendix.
\begin{lemma}\label{lem:DefRelation}
For $K\in \calT_h$, let $\bv\in \bH_T^1(K)$ and $\pt \bv\in \bH_T^1(K^\gamma)$
be related via $\pt \bv = \calP_{\bp} \bv$.
Then
\begin{align}\label{eqn:DefDef}
|{\rm Def}_{\gamma} \pt \bv-({\rm Def}_{\Gamma_h} \bv)\circ \bp^{-1}|\lesssim  h \big(|(\nab_{\Gamma_h} \bv)\circ \bp^{-1}|+ |\bv\circ \bp^{-1}|\big).
\end{align}
Consequently, by a change of variables,
\begin{align}
\label{eqn:DefRelation}
\|{\rm Def}_{\gamma} \pt \bv\|_{L_2(K^\gamma)}\lesssim \|{\rm Def}_{\Gamma_h} \bv\|_{L_2(K)}+ h \big( \|\nab_{\Gamma_h} \bv\|_{L_2(K)}
+ \|\bv\|_{L_2(K)}\big).
\end{align}
\end{lemma}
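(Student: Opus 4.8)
The plan is to work pointwise on a fixed element $K\in\calT_h$ and compare the two deformation tensors directly using the explicit formula \eqref{eqn:Piola} for the Piola transform $\calP_\bp$. First I would write $\pt\bv\circ\bp = \mu_h^{-1}[\bPi - d\bH]\bv =: M\bv$ with $M = \mu_h^{-1}(\bPi - d\bH)$, and record the key smallness facts that will drive every estimate: $|1-\mu_h|\lesssim h^2$ by \eqref{eqn:muBound}, $|d|\lesssim h^2$ on $\Gamma_h$ by the $O(h^2)$ geometric assumption, $|\bnu\circ\bp-\bnu_h|\lesssim h$ so that $|\bPi - \bPi_h|\lesssim h$ and $|M - \bPi_h|\lesssim h$, and $\bH$ is bounded by smoothness of $\gamma$. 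Consequently $M$ is a perturbation of $\bPi_h$ of size $O(h)$, and since $\bv = \bPi_h\bv$ is tangential to $K$, we also have $M\bv = \bv + O(h)|\bv|$.

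Next I would differentiate. By the chain rule $\nab_\gamma(\pt\bv) = \bPi\,\nab(\pt\bv^e)\,\bPi$ where $\pt\bv^e = (M\bv)^e\circ\bp$ is the constant-in-normal extension, so computing $\nab(\pt\bv^e)$ and applying $\bPi\cdot\bPi$ reduces to differentiating $M(x)\bv(x)$ along $\Gamma_h$ and then transporting via $D\bp$. Using the product rule, $\nab_{\Gamma_h}(M\bv) = (\nab_{\Gamma_h}M)\bv + M\nab_{\Gamma_h}\bv$; the first term is bounded by $C|\bv|$ because $\nab_{\Gamma_h}M$ is controlled by $\nab_{\Gamma_h}(\mu_h^{-1})$, $\nab_{\Gamma_h}d$, and $\nab_{\Gamma_h}\bH$, all of which are $O(1)$ (indeed $\nab_{\Gamma_h}d = \bPi_h\bnu = O(h)$ is even smaller). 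The relation between $D\bp$ on $\Gamma_h$ and $\bPi$ is $D\bp = [\bI - d\bH]\bPi$ up to the usual identities, so pushing forward via $\bp^{-1}$ introduces another factor that is $\bPi + O(h^2)$. Assembling these, $(\nab_\gamma\pt\bv)\circ\bp = \bPi_h(\nab_{\Gamma_h}\bv)\bPi_h + O(h)\big(|\nab_{\Gamma_h}\bv| + |\bv|\big)$, i.e. the covariant gradient of $\pt\bv$ equals the covariant gradient of $\bv$ up to projection discrepancies and an $O(h)$ remainder. Symmetrizing both sides (the map $A\mapsto\frac12(A+A^\intercal)$ has operator norm $1$) gives \eqref{eqn:DefDef}, and then \eqref{eqn:DefRelation} follows from \eqref{eqn:DefDef} by squaring, integrating over $K^\gamma$, and changing variables via $\bp$ (which is bi-Lipschitz with Jacobian $\mu_h\approx 1$, so $\|g\circ\bp^{-1}\|_{L_2(K^\gamma)}\approx\|g\|_{L_2(K)}$).

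The main obstacle I anticipate is the bookkeeping in the middle step: keeping careful track of which discrepancies are genuinely $O(h)$ versus $O(h^2)$, and correctly handling the two tangential projections (one from $\nab_\gamma$ acting on the left and right, one hidden in $D\bp$) so that the leading terms $\bPi_h\nab_{\Gamma_h}\bv\,\bPi_h$ and $\nab_{\Gamma_h}\bv$ differ only by $O(h)$ multiples of $\nab_{\Gamma_h}\bv$ itself rather than by something uncontrolled. In particular one must use $\bv=\bPi_h\bv$ at the right moments, since without tangentiality the projections would not telescope. Everything else — the product-rule expansion, the boundedness of the geometric quantities, and the final change of variables — is routine given the estimates already collected in Section \ref{sec-notation}.
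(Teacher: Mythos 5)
There is a genuine gap, and it is exactly at the point you flag as "bookkeeping": the product-rule term involving the derivative of the Piola matrix. You write $\pt\bv\circ\bp = M\bv$ with $M=\mu_h^{-1}(\bPi-d{\bf H})$ and then bound $(\nab_{\Gamma_h}M)\bv$ by $C|\bv|$, observing only that $\nab\mu_h^{-1}$, $\nab\bnu$ and $\nab{\bf H}$ are $O(1)$. With that treatment the best you can conclude is
\begin{align*}
|{\rm Def}_{\gamma}\pt\bv-({\rm Def}_{\Gamma_h}\bv)\circ\bp^{-1}|\lesssim h\,|(\nab_{\Gamma_h}\bv)\circ\bp^{-1}|+|\bv\circ\bp^{-1}|,
\end{align*}
i.e.\ the zeroth-order term appears \emph{without} the factor $h$, so the asserted conclusion $(\nab_\gamma\pt\bv)\circ\bp=\bPi_h(\nab_{\Gamma_h}\bv)\bPi_h+O(h)(|\nab_{\Gamma_h}\bv|+|\bv|)$ does not follow from the estimates you have stated. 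The factor $h$ on $|\bv|$ is not a bookkeeping artifact: it requires a structural cancellation. In the paper's Step 3 one computes (cf.\ \eqref{eqn:nabL}--\eqref{eqn:AlmostDone}) that the $O(1)$ part of $\nab{\bf L}\,\bv$ (your $\nab M\,\bv$) is exactly ${\rm tr}({\bf H})({\bf L}\bv)\otimes\bnu-\bnu\otimes({\bf H}\bv)-({\bf H}\bv)\otimes\bnu$, plus a term $(\bnu\cdot\bv){\bf H}$ which is already $O(h|\bv|)$ because $\bv\cdot\bnu_h=0$ and $|\bnu-\bnu_h|\lesssim h$; obtaining this form also needs $\nab\mu_h=-{\rm tr}({\bf H})\,\bnu+O(h)$, proved via Jacobi's formula in \eqref{eqn:nabmu}. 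Every $O(1)$ term is a rank-one matrix with $\bnu$ on one side, and it is only after sandwiching by the tangential projections in \eqref{eqn:LChain} — using $\bPi\bnu=0$, $\bPi_h\bnu=\bPi_h(\bnu-\bnu_h)=O(h)$, and $|\bPi-\bPi_h|\lesssim h$ — that these contributions drop to $O(h|\bv|)$. This annihilation mechanism is absent from your proposal; treating $\nab_{\Gamma_h}M$ as merely bounded loses it irretrievably.

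The loss is not harmless for the paper either: the weaker bound with $+|\bv|$ is strictly weaker than \eqref{eqn:DefDef}, and it is the full strength of \eqref{eqn:DefDef} that delivers the $O(h)$ geometric-consistency estimate \eqref{eqn:GhBound} used in the convergence proof (a remainder of size $|\bv|$ there would give an $O(1)$ consistency error). The remainder of your plan — the chain-rule identity transporting $\nab_{\Gamma_h}$ to $\nab_\gamma$ (the paper's Step 1), the smallness facts $|M-\bPi_h|\lesssim h$, $|[{\bf I}-\tfrac{\bnu\otimes\bnu_h}{\bnu\cdot\bnu_h}][{\bf I}-d{\bf H}]^{-1}-\bPi_h|\lesssim h$, the symmetrization, and the change of variables with $\mu_h\approx1$ for \eqref{eqn:DefRelation} — matches the paper's Steps 1--2 and is fine (a minor slip: the pushforward factor differs from $\bPi_h$ by $O(h)$, not from $\bPi$ by $O(h^2)$, but since it multiplies $\nab_{\Gamma_h}\bv$ this does not matter). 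To repair the proof you must carry out the explicit computation of $\nab M\,\bv$, isolate its $\bnu$-rank-one structure, and invoke the projection annihilation before estimating.
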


We now apply the above definitions of Piola transforms to mappings between planes (surface triangles), which is critical to our construction of vertex degrees of freedom for vector fields on $\Gamma_h$.
\begin{definition}\label{def:MaK}
For each vertex $a\in \calV_h$ in the triangulation, 
we arbitrarily choose a single (fixed) face $K_a\in \calT_a$.
For $K\in \calT_a$, we define
$\calM_a^K:\bbR^3\to \bbR^3$ by
%
%
\begin{equation}\label{eqn:calMDef}
\calM_a^K \bx= \Big(\bnu_{K_a} \cdot \bnu_K \Big[ {\bf I} - \frac{\bnu_{K_a} \otimes \bnu_{K}}{\bnu_{K_a}\cdot \bnu_{K}}\Big]\Big) \bx,
\end{equation}
where we recall $\bnu_{K_a}$ and $\bnu_{K}$ are the outward unit normals of $K_a$ and $K$, respectively.
In particular, $\calM_a^K \bx$ is the Piola transform of $\bx$ with respect
to the inverse of the closest point projection onto the plane containing $K_a$ (cf.~\eqref{eqn:invPiola}).
\end{definition}
\begin{remark}
By properties of the Piola transform, $\calM_a^K \bx$
is tangential to $K$, i.e., $(\calM_a^K \bx)\cdot \bnu_K=0$ for all $x\in \mathbb{R}^3$.
\end{remark}

We next show that the ``ideal'' and ``practical'' interpretations of vectors at vertices discussed in the introduction (cf. Figures \ref{fig1} and \ref{fig2}) do not differ by too much.
\begin{lemma}\label{lem:PiolaD} 
Fix $a\in \calV_h$ and let $\bu$ lie in the tangent plane of $\gamma$ at 
$\bp(a)$.  For $K\in \calT_a$, let $\ipt{\bu}_K = \calP_{\bp^{-1}} \bu|_K$ be the Piola transform of $\bu$ to $K$ via the inverse of the closest point projection (cf.~\eqref{eqn:invPiola}).
Then 
\begin{align}
\label{piola_defect}
|\ipt{\bu}_K-\mathcal{M}_a^K \ipt \bu_{K_a}|
\lesssim h^2 |\ipt \bu_{K_a}| \le h^2 |\bu|.
\end{align} 
\end{lemma}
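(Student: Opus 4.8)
The plan is to compare the two vectors $\ipt{\bu}_K = \calP_{\bp^{-1}}\bu|_K$ and $\mathcal{M}_a^K \ipt{\bu}_{K_a}$ by expressing both as explicit linear maps applied to $\bu$ and estimating the difference of the matrix factors. First I would write out $\ipt{\bu}_K$ using the formula \eqref{eqn:invPiola} evaluated at the point $a$ (so that $\bv\circ\bp$ becomes $\bu$ since $\bu$ is tangent at $\bp(a)$): this gives $\ipt{\bu}_K = \mu_h(a)\big[{\bf I}-\tfrac{\bnu\otimes\bnu_K}{\bnu\cdot\bnu_K}\big][{\bf I}-d{\bf H}]^{-1}\bu$, where all geometric quantities $\bnu, \mu_h, d, {\bf H}$ are evaluated at $a$. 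Similarly $\ipt{\bu}_{K_a} = \mu_h(a)\big[{\bf I}-\tfrac{\bnu\otimes\bnu_{K_a}}{\bnu\cdot\bnu_{K_a}}\big][{\bf I}-d{\bf H}]^{-1}\bu$. The key observation is that the only $K$-dependent factor in $\ipt{\bu}_K$ is the projection-type matrix $\big[{\bf I}-\tfrac{\bnu\otimes\bnu_K}{\bnu\cdot\bnu_K}\big]$, while $\mathcal{M}_a^K$ from \eqref{eqn:calMDef} is the analogous object built from $\bnu_{K_a}$ and $\bnu_K$ instead of $\bnu$ and $\bnu_K$. So the whole comparison reduces to showing that replacing the true normal $\bnu(a)$ by the discrete normal $\bnu_{K_a}$ in that matrix produces only an $O(h^2)$ perturbation once we track how the factors compose.

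The main algebraic step is the identity (or near-identity) expressing $\mathcal{M}_a^K \ipt{\bu}_{K_a}$ in the same form as $\ipt{\bu}_K$ up to lower-order terms. Concretely, I would compute
\begin{align*}
\mathcal{M}_a^K \ipt{\bu}_{K_a}
= \Big(\bnu_{K_a}\cdot\bnu_K\Big[{\bf I}-\tfrac{\bnu_{K_a}\otimes\bnu_K}{\bnu_{K_a}\cdot\bnu_K}\Big]\Big)\,\mu_h(a)\Big[{\bf I}-\tfrac{\bnu\otimes\bnu_{K_a}}{\bnu\cdot\bnu_{K_a}}\Big][{\bf I}-d{\bf H}]^{-1}\bu,
\end{align*}
and then multiply out the two bracketed $3\times 3$ matrices $\big(\bnu_{K_a}\cdot\bnu_K{\bf I}-\bnu_{K_a}\otimes\bnu_K\big)$ and $\big({\bf I}-\tfrac{\bnu\otimes\bnu_{K_a}}{\bnu\cdot\bnu_{K_a}}\big)$. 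Using $\bnu_{K_a}^\intercal\big({\bf I}-\tfrac{\bnu\otimes\bnu_{K_a}}{\bnu\cdot\bnu_{K_a}}\big) = \bnu_{K_a}^\intercal - \tfrac{(\bnu_{K_a}\cdot\bnu)\bnu_{K_a}^\intercal}{\bnu\cdot\bnu_{K_a}} = 0$, the cross term $\bnu_{K_a}\otimes\bnu_K$ composed on the right with the second bracket involves $\bnu_K^\intercal$ acting on that bracket — which is $\bnu_K^\intercal - \tfrac{(\bnu_K\cdot\bnu)\bnu_{K_a}^\intercal}{\bnu\cdot\bnu_{K_a}}$, i.e.\ an $O(h)$ vector since $\bnu_K\cdot\bnu = 1+O(h^2)$, $\bnu_{K_a}\cdot\bnu = 1+O(h^2)$, and $\bnu_K - \bnu_{K_a} = O(h)$ along with $\bnu - \bnu_{K_a}=O(h)$. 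Combined with the outer scalar $\bnu_{K_a}\cdot\bnu_K = 1 + O(h^2)$, the product collapses to $\mu_h(a)\big({\bf I}-\bnu_{K_a}\otimes\bnu_K + O(h)\big)[{\bf I}-d{\bf H}]^{-1}\bu$ — not yet good enough. The refinement I expect to need is to exploit that this difference gets multiplied against $\ipt{\bu}_{K_a}$, which itself lies in the tangent plane of $K_a$, i.e.\ $\bnu_{K_a}^\intercal \ipt{\bu}_{K_a} = 0$; this kills the would-be $O(h)$ terms that are proportional to $\bnu_{K_a}$ and upgrades the remaining error to $O(h^2)$. This cancellation against the tangential constraint of $\ipt{\bu}_{K_a}$ is the crux and the step I expect to be the main obstacle — it requires carefully writing every intermediate matrix as (something annihilating $\bnu_{K_a}$) plus (something of size $O(h^2)$ acting arbitrarily), rather than naively bounding each factor.

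To close the estimate I would then invoke the standard geometric bounds already available in Section \ref{sec-notation}: $|d(a)|\lesssim h^2$ so $[{\bf I}-d{\bf H}]^{-1} = {\bf I}+O(h^2)$; $|1-\mu_h(a)|\lesssim h^2$ from \eqref{eqn:muBound}; $|\bnu\circ\bp - \bnu_h|\lesssim h$ and the nondegeneracy $\bnu\cdot\bnu_h\ge c>0$, together with shape-regularity giving $|\bnu_K - \bnu_{K_a}|\lesssim h$ for $K,K_a\in\calT_a$; and for the scalar products $\bnu\cdot\bnu_K = 1 - \tfrac12|\bnu-\bnu_K|^2 = 1+O(h^2)$ (unit vectors), likewise for $\bnu_{K_a}\cdot\bnu_K$ and $\bnu\cdot\bnu_{K_a}$. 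Assembling these, the difference of the two $3\times3$ operators applied to $\bu$ is bounded by $Ch^2$, and since $|\ipt{\bu}_{K_a}| \approx |\bu|$ (a consequence of Lemma \ref{lem:PiolaNormEquiv} with $m=0$, or directly from \eqref{eqn:invPiola} and the bounds above), we obtain
\begin{align*}
|\ipt{\bu}_K - \mathcal{M}_a^K \ipt{\bu}_{K_a}| \lesssim h^2 |\ipt{\bu}_{K_a}| \le h^2|\bu|,
\end{align*}
which is \eqref{piola_defect}.
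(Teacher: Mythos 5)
Your setup is essentially the paper's: evaluate \eqref{eqn:invPiola} at the vertex, use $|d(a)|\lesssim h^2$, $|1-\mu_h|\lesssim h^2$, $|\bnu-\bnu_K|+|\bnu-\bnu_{K_a}|\lesssim h$ and $1-\bnu\cdot\bnu_K=\tfrac12|\bnu-\bnu_K|^2\lesssim h^2$, and then beat the naive $O(h)$ bound by a cancellation. The gap is in the crux step itself, which you flag as uncertain and describe with a mechanism that does not work as stated. You propose to use $\bnu_{K_a}^{\intercal}\ipt{\bu}_{K_a}=0$ to ``kill the $O(h)$ terms proportional to $\bnu_{K_a}$.'' But in your expansion the matrices act on $[{\bf I}-d{\bf H}]^{-1}\bu$ (tangent to $\gamma$ at $\bp(a)$), not on $\ipt{\bu}_{K_a}$; and more fundamentally, a rank-one term $\bnu_{K_a}\otimes\bz$ applied to a vector $\bx$ yields $(\bz\cdot\bx)\,\bnu_{K_a}$, an \emph{output} in the direction $\bnu_{K_a}$, which no tangentiality property of the operand can annihilate --- tangentiality of an operand only helps by making a scalar $\bz\cdot\bx$ small when $\bz$ is close to the relevant normal. (A small additional imprecision: you write $\mu_h(a)$ for both elements, whereas $\mu_K(a)\ne\mu_{K_a}(a)$; this is harmless since both are $1+O(h^2)$.)

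The cancellation that actually closes your version of the argument is different: setting $\bw=[{\bf I}-d{\bf H}]^{-1}\bu$ and multiplying everything out, the difference $\ipt{\bu}_K-\calM_a^K\ipt{\bu}_{K_a}$ reduces, modulo $O(h^2)|\bu|$, to $-\big((\bnu_K-\bnu_{K_a})\cdot\bw\big)\bnu+\big((\bnu_K-\bnu_{K_a})\cdot\bw\big)\bnu_{K_a}=\big((\bnu_K-\bnu_{K_a})\cdot\bw\big)(\bnu_{K_a}-\bnu)$, i.e.\ the two $O(h)$ rank-one leftovers (one along $\bnu$, one along $\bnu_{K_a}$) carry the same scalar coefficient up to $O(h^2)$, and their difference is a product of two $O(h)$ factors; no tangentiality of $\ipt{\bu}_{K_a}$ is needed in this organization. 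The paper instead avoids this bookkeeping by first substituting the forward transform \eqref{eqn:Piola}, $\bu=\mu_{K_a}^{-1}[\bPi-d{\bf H}]\ipt{\bu}_{K_a}$, and using ${\bf H}\bPi={\bf H}$ so the $[{\bf I}-d{\bf H}]$ factors cancel exactly, yielding the identity $\ipt{\bu}_K=\frac{\bnu\cdot\bnu_K}{\bnu\cdot\bnu_{K_a}}\bigl[{\bf I}-\frac{\bnu\otimes\bnu_K}{\bnu\cdot\bnu_K}\bigr]\ipt{\bu}_{K_a}$; comparing with \eqref{eqn:calMDef} the error collapses to $(\bnu-\bnu_{K_a})\otimes\bnu_K\,\ipt{\bu}_{K_a}$ plus $O(h^2)$ terms, and only there is $\bnu_{K_a}\cdot\ipt{\bu}_{K_a}=0$ invoked --- to replace $\bnu_K$ by $\bnu_K-\bnu_{K_a}$ in the \emph{second} slot of the outer product (making the scalar $O(h)$), not to kill $\bnu_{K_a}$-directional output. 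With the cancellation repaired in either of these two ways, your plan goes through and the final step $|\ipt{\bu}_{K_a}|\lesssim|\bu|$ is fine.
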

\begin{proof}
Using ${\bf H} \bnu = \frac12 \nab |\bnu|^2 = 0$,
we have ${\bf H}\bPi = {\bf H}$.  
Note in addition that $\left [ {\bf I} - \frac{\bnu \otimes \bnu_K}{\bnu \cdot \bnu_K} \right ] \bPi = \left [ {\bf I} - \frac{\bnu \otimes \bnu_K}{\bnu \cdot \bnu_K} \right ]$.  
Therefore by  \eqref{eqn:invPiola},  \eqref{eqn:Piola}, and \eqref{eqn:muForm} we have
\begin{align*}
\begin{aligned} 
\ipt{\bu}_K 
& = \mu_K \left [ {\bf I} - \frac{\bnu \otimes \bnu_K}{\bnu \cdot \bnu_K} \right ] \left [ {\bf I} - d {\bf H} \right ] ^{-1} \bu\\
& = \mu_K \left [ {\bf I} - \frac{\bnu \otimes \bnu_K}{\bnu \cdot \bnu_K} \right ] \left [ {\bf I} - d {\bf H} \right ] ^{-1} \frac{1}{\mu_{K_a}} \left [ \bPi - d {\bf H} \right ] \ipt{\bu}_{K_a}\\
& = \mu_K \left [ {\bf I} - \frac{\bnu \otimes \bnu_K}{\bnu \cdot \bnu_K} \right ] \left [ {\bf I} - d {\bf H} \right ] ^{-1} \frac{1}{\mu_{K_a}} \left [ {\bf I}- d {\bf H} \right ] \bPi \ipt{\bu}_{K_a}
\\ & = \frac{\bnu \cdot \bnu_K}{\bnu \cdot \bnu_{K_a}} \left  [ {\bf I} - \frac{\bnu \otimes \bnu_K}{\bnu \cdot \bnu_K} \right ] {\bf \Pi} \ipt{\bu}_{K_a}
\\ & = \frac{\bnu \cdot \bnu_K}{\bnu \cdot \bnu_{K_a}} \left  [ {\bf I} - \frac{\bnu \otimes \bnu_K}{\bnu \cdot \bnu_K} \right ] \ipt{\bu}_{K_a}.
\end{aligned}
\end{align*}

Now $|\bnu-\bnu_K| + |\bnu_{K_a}-\bnu_K| \lesssim h$, $|1-\bnu\cdot \bnu_K| = \frac{1}{2} |\bnu-\bnu_K|^2 \lesssim h^2$, and $|1-\bnu_{K_a} \cdot \bnu_K|\lesssim h^2$.   
Thus employing \eqref{eqn:calMDef} and the identity $\bnu_{K_a} \cdot \ipt{\bu}_{K_a}=0$ we have
\begin{align*}
\begin{aligned}
|\ipt{\bu}_K-\calM_a^K \ipt{\bu}_{K_a}| 
& =  \left | \left (  \frac{\bnu \cdot \bnu_K}{\bnu \cdot \bnu_{K_a}} \left [ {\bf I} - \frac{\bnu \otimes \bnu_K}{\bnu \cdot \bnu_K} \right ] - \bnu_{K_a} \cdot \bnu_K \left [ {\bf I} - \frac{ \bnu_{K_a} \otimes \bnu_K}{\bnu_{K_a} \cdot \bnu_K} \right ] \right )  \ipt{\bu}_{K_a} \right |\\
& \lesssim  \left |   \big  [  \left [ {\bf I} - \bnu \otimes \bnu_K \right ] - \left [ {\bf I} - \bnu_{K_a} \otimes \bnu_K \right ]   \big ]  \ipt{\bu}_{K_a} \right |+h^2 |\ipt{\bu}_{K_a}|
\\ & = |(\bnu -\bnu_{K_a}) \otimes \bnu_K \ipt{\bu}_{K_a}| + h^2 |\ipt{\bu}_{K_a}|
\\ & = |(\bnu-\bnu_{K_a}) \otimes (\bnu_K-\bnu_{K_a}) \ipt{\bu}_{K_a}| + h^2 |\ipt{\bu}_{K_a}|
\\ & \lesssim h^2 |\ipt{\bu}_{K_a}|. 
\end{aligned} 
\end{align*}
Finally noting that $|\ipt{\bu}_{K_a}| = | \calP_{\bp|_{K_a}^{-1}} \bu| \lesssim {|\bu|}$ (cf.~\eqref{eqn:invPiola}) completes the proof.
\end{proof}

\begin{remark}  In {SFEMs} it is common to use a higher-order surface approximation 
$\Gamma$ to $\gamma$ of polynomial degree $k$  (here we consider $k=1$).  In that case $\nu_K$ is no 
longer constant on $K$, and we have $|\nu(a)-\nu_K(a)|\lesssim h^k$.  The results of Lemma \ref{lem:PiolaD}
 easily generalize to this situation with $h^{2k}$ replacing $h^2$ on the right hand side of \eqref{piola_defect}.
\end{remark}

\section{Finite element spaces and inf-sup stability}\label{sec-FE}
By utilizing Lemma \ref{lem:PiolaD}, we can construct tangential 
finite element spaces on the surface approximation $\Gamma_h$ using nodal (Lagrange) basis functions. 
The essential idea is to enforce continuity at nodal degrees of freedom in a weak sense through 
the mapping $\calM_a^K$ given in Definition \ref{def:MaK}. Although this procedure 
does not yield a globally continuous finite element space, it preserves in-plane normal continuity and exhibits 
 weak continuity properties. These properties are generally sufficient for achieving convergence in second-order elliptic problems.

In the following discussion, we focus on the construction of the lowest-order MINI Stokes pair for simplicity \cite{mini}. 
However, we expect that Definition \ref{def:MaK} and Lemma \ref{lem:PiolaD} provide a general framework 
for constructing convergent finite element schemes based on classical and conforming finite element pairs such as Taylor-Hood and Scott-Vogelius \cite{BoffiEtalBook}.

\subsection{Surface MINI space and approximation properties}

Let $\hat K$ be the reference triangle with vertices $(0,0),(1,0),(0,1)$,
and for $K\in \calT_h$, let $F_K:\hat K\to K$ be
an affine diffeomorphism.  
The constant Jacobian matrix of $F_K$ is denoted
by $DF_K\in \bbR^{3\times 2}$.  Note that the columns
of $DF_K$ span the tangential space of $K$.
For a vector-valued function $\hat \bv:\hat K\to \bbR^2$,
its Piola transform with respect  to $F_K$ is given by
\begin{align}
\label{ref_piola}
\bv(x) = (\calP_{F_K} \hat \bv)(x):= \frac1{J}DF_K \hat \bv(\hat x),\qquad x = F_K(\hat x),
\end{align}
and $J = \sqrt{\det(DF_K^\intercal DF_K)}$.

Let $b_{K}$ be the standard cubic bubble function on $K$,
i.e., the product of the three barycentric coordinates of $K$.
The local MINI space defined on the reference triangle
is given by $\hat \bV:=\bpol_1(\hat K) \oplus b_{\hat K} \bpol_0(\hat K)$,
where $\pol_k(D)$ is the space of polynomials of degree $\le k$
with domain $D$, and {$\bpol_k(D) = [\pol_k(D)]^2$}.
We then define the surface finite element spaces 
on $\Gamma_h$ as
\begin{align*}
\bV_h &= \{\bv\in \bL_2(\Gamma_h):\forall K\in \calT_h \, \exists \hat \bv\in \hat \bV,\ \bv_K = \calP_{F_K}\hat \bv\  ;\
\bv_K(a) = \calM_a^K(\bv_{K_a}(a))\ \forall K\in \calT_a,\ \forall a\in \calV_h\},\\
Q_h & = \{q\in H^1(\Gamma_h)\cap \mathring{L}_2(\Gamma_h):\ q_K\in \pol_1(K)\ \forall K\in \calT_h\},
\end{align*}
where $\bv_K = \bv|_K$ is the restriction of $\bv$ to $K\in \calT_h$,
 $\calM_a^K$ is defined in Definition \ref{def:MaK}, and we recall $\calV_h$
 is the set of vertices in $\calT_h$.

For $\bv\in \bV_h$, we let $\bv_L$ denote the linear portion of $\bv$, i.e.,
$\bv_L$ is the unique tangential and piecewise linear vector
in $\bV_h$ satisfying $(\bv_L)_{K_a}(a) = \bv_{K_a}(a)$ for all $a\in \calV_h$.
We then have the following identity on each $K\in \calT_h$:
\begin{equation}\label{eqn:MiniDecomp}
\bv = \bv_L + 60 b_K \fint_K (\bv - \bv_L)\qquad \forall \bv\in \bV_h,
\end{equation}
where 
 we have used the fact $\int_K b_K = |K|/60$ for all $K\in \calT_h$.

Because the columns of
$DF_K$ span the tangential space of $K$,
we see that functions in the discrete velocity space $\bV_h$ are tangential, i.e., $\bv\cdot \bnu_h = 0$
for all $\bv\in \bV_h$.  In addition, due to the normal-preserving properties
of the transform $\calM_a^K$, the space is $H({\rm div})$-conforming
as the next result shows.

\begin{proposition}\label{prop:Hdiv}
There holds $\bV_h \subset \bH({\rm div}_{\Gamma_h};\Gamma_h)$.
\end{proposition}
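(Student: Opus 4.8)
The plan is to verify the characterization \eqref{eqn:HdivProp} of $\bH({\rm div}_{\Gamma_h};\Gamma_h)$ edge-by-edge. Fix an interior edge $e$ shared by $K_1 = K_1^e$ and $K_2 = K_2^e$, with outward in-plane normals $\bn_1^e$ and $\bn_2^e$. Since $\bV_h \subset \bH^1_h(\Gamma_h)$ (each $\bv_K$ is a polynomial vector field, hence smooth on $K$), it suffices to show $\bv_1 \cdot \bn_1^e|_e + \bv_2 \cdot \bn_2^e|_e = 0$, where $\bv_j = \bv|_{K_j}$. The key observations are: (i) on each $K_j$, $\bv_j$ is the reference Piola transform $\calP_{F_{K_j}}\hat\bv_j$ of a reference MINI field, and the classical reference Piola transform preserves in-plane normal traces across $e$ up to orientation; (ii) the bubble part of each $\bv_j$ vanishes on $\partial K_j$, so only the linear part $\bv_{j,L}$ contributes on $e$; and (iii) the vertex conditions $\bv_K(a) = \calM_a^K(\bv_{K_a}(a))$ force the two linear traces $\bv_{1,L}|_e$ and $\bv_{2,L}|_e$ to have matching normal components at the two endpoints of $e$, and hence (being affine along $e$) along all of $e$.

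More precisely, I would first reduce to the linear part: using \eqref{eqn:MiniDecomp}, $\bv_j = \bv_{j,L} + 60 b_{K_j}\fint_{K_j}(\bv_j - \bv_{j,L})$, and $b_{K_j} \equiv 0$ on $e \subset \partial K_j$, so $\bv_j|_e = \bv_{j,L}|_e$. Next, I would track how the reference Piola transform acts on normal traces. Let $\hat e$ be the edge of $\hat K$ corresponding to $e$ under $F_{K_1}$ (and, via a suitable identification, under $F_{K_2}$); the standard computation for \eqref{ref_piola} gives that $(\calP_{F_{K_j}}\hat\bv)\cdot \bn_j^e$, integrated or evaluated along $e$, equals a fixed scalar multiple of $\hat\bv\cdot\hat\bn$ along $\hat e$, with the sign determined by the orientation of $F_{K_j}$ restricted to $\hat e$. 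The cleanest route, though, is probably the direct one: since everything in play is affine along $e$, it is enough to check the identity $\bv_1\cdot\bn_1^e + \bv_2\cdot\bn_2^e = 0$ at the two endpoints $a, a'$ of $e$. At such a vertex $a$, by Definition \ref{def:MaK} and the remark following it, $\bv_{K_j}(a) = \calM_a^{K_j}(\bv_{K_a}(a))$, and the crucial algebraic fact is that $\calM_a^{K_1}(\bx)$ and $\calM_a^{K_2}(\bx)$, being Piola transforms (with respect to the inverse closest-point projection onto the plane of $K_a$) to the planes of $K_1$ and $K_2$ respectively, are two representatives of the same tangential-to-$\gamma$ vector at $\bp(a)$ up to the same $O(h^2)$ normal corrections — but more to the point, the exact identity we need is that the \emph{in-plane normal components} of these two representatives at the shared edge $e$ are negatives of one another, which follows from the Piola transform's preservation of $\bH({\rm div})$-normal continuity (the surface analogue recorded in \eqref{eqn:HdivProp} applied to the idealized picture, or directly from the explicit formula \eqref{eqn:calMDef}).

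The main obstacle I anticipate is making the last step genuinely \emph{exact} rather than merely $O(h^2)$: Lemma \ref{lem:PiolaD} only gives an $O(h^2)$ comparison between $\calM_a^K$ and the idealized Piola transform, which is not enough for a strict inclusion $\bV_h \subset \bH({\rm div}_{\Gamma_h};\Gamma_h)$. So the argument must instead exploit the precise structure of $\calM_a^K$ in \eqref{eqn:calMDef} directly. Concretely, I would compute $\bn_j^e \cdot \calM_a^{K_j}(\bx)$ for $j = 1,2$ using \eqref{eqn:calMDef} and show the two expressions sum to zero for every $\bx \in \bbR^3$ — this should reduce to a small linear-algebra identity involving $\bnu_{K_a}, \bnu_{K_1}, \bnu_{K_2}, \bn_1^e, \bn_2^e$ and the geometry of the dihedral configuration along $e$ (notably that $\bn_j^e$ is tangent to $K_j$ and perpendicular to the edge direction, and that $e$ lies in both planes). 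Once that pointwise identity is in hand, evaluating at both endpoints of $e$ and using affineness along $e$ closes the argument; summing over all edges and invoking \eqref{eqn:HdivProp} yields $\bV_h \subset \bH({\rm div}_{\Gamma_h};\Gamma_h)$.
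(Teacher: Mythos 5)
Your proposal follows essentially the same route as the paper's proof: drop the bubble contribution on the edge, use affineness along $e$ to reduce to the two endpoints, and verify \emph{exact} cancellation of the in-plane normal components directly from the formula \eqref{eqn:calMDef} rather than from the $O(h^2)$ comparison of Lemma \ref{lem:PiolaD}. The linear-algebra identity you defer does indeed hold for every $\bx\in\bbR^3$: by the Binet--Cauchy identity, $\calM_a^{K_j}\bx\cdot\bn_j^e=(\bnu_{K_a}\times\bx)\cdot(\bnu_{K_j}\times\bn_j^e)$, and $\bnu_{K_1}\times\bn_1^e=-\bnu_{K_2}\times\bn_2^e$ (both are the unit tangent of $e$, traversed with opposite orientations by the two triangles), which is precisely how the paper closes the argument.
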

\begin{proof}
Due to the properties of the cubic bubble, it is sufficient
to show that the linear component of $\bv\in \bV_h$ 
satisfies the in-plane normal continuity condition \eqref{eqn:HdivProp}
across all edges in $\calT_h$.

Let $a\in \calV_h$ be a vertex of $\calT_h$,
and let $K_1,K_2\in \calT_a$ be two elements that have $a$
as a vertex and share a common edge $e = \p K_1\cap \p K_2$.
Denote by $\bn^e_j$ the  in-plane outward unit normal
vector with respect to $\p K_j$ restricted to $e$.

Using the definitions of the finite element space and
the operator $\calM_a^K$, along with the Binet-Cauchy identity,
there holds for any $\bv\in \bV_h$,
\begin{align*}
\bv_j(a) \cdot \bn_j^e
&  = \calM_a^{K_j}(\bv_{K_a}(a)) \cdot \bn^e_j\\
& = (\bnu_{K_a}\cdot \bnu_{K_j})(\bv_{K_a}(a) \cdot \bn^e_j) - (\bnu_{K_a}\cdot \bn^e_j)(\bnu_{K_j}  \cdot \bv_{K_a}(a)) =
 (\bnu_{K_a}\times \bv_{K_a}(a))\cdot (\bnu_{K_j} \times \bn^e_j),
\end{align*}
where $\bv_j = \bv_{K_j} = \bv|_{K_j}$.
Therefore,
\begin{align*}
\bv_1(a) \cdot \bn^e_1+\bv_2(a)\cdot \bn^e_2
& =  (\bnu_{K_a}\times \bv_{K_a}(a))\cdot \big((\bnu_{K_1} \times \bn^e_1) +(\bnu_{K_2} \times \bn^e_2)\big)=0.
\end{align*}
Because $\bv_j$ is a linear polynomial on $e$,
we conclude that \eqref{eqn:HdivProp} is satisfied on all edges.
This implies the desired result $\bV_h \subset \bH({\rm div}_{\Gamma_h};\Gamma_h)$.
\end{proof}

\begin{lemma}\label{lem:Interp}
For each $\bw\in \bC(\gamma)\cap \bH^1_T(\gamma)\cap \bH_h^2(\gamma)$,
there exists $\bI_h \ipt{\bw}\in \bV_h$ such that
\begin{align*}
h^m \|\ipt{\bw}-\bI_h\ipt{\bw}\|_{H^m(K)}\lesssim h^2 \|\bw\|_{H^2_h(\omega_{K^\gamma})}\qquad \forall K\in \calT_h,\quad m=0,1,
\end{align*}
with $\ipt{\bw} = \calP_{\bp^{-1}} \bw$.
\end{lemma}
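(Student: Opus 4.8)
The plan is to build $\bI_h\ipt{\bw}$ by prescribing nodal values that inherit the single-valued continuity of $\bw$ on $\gamma$, and then to compare it elementwise to the ordinary MINI nodal interpolant. For each vertex $a\in\calV_h$ set $(\bI_h\ipt{\bw})_{K_a}(a):=\ipt{\bw}|_{K_a}(a)$, i.e.\ the Piola transform to $K_a$ of the tangent vector $\bw(\bp(a))$ via \eqref{eqn:invPiola}; this is well defined since $\bw\in\bC(\gamma)$ and, by Lemma~\ref{lem:PiolaNormEquiv}, $\ipt{\bw}|_{K_a}\in\bH^2(K_a)\hookrightarrow\bC(\overline{K_a})$. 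Propagate this value to all of $\calT_a$ by $(\bI_h\ipt{\bw})_K(a):=\calM_a^K\big((\bI_h\ipt{\bw})_{K_a}(a)\big)$, so that the $\calM_a^K$-compatibility conditions defining $\bV_h$ hold by construction and every nodal value is tangential to its face. This fixes the linear part of $\bI_h\ipt{\bw}$ on each $K\in\calT_h$; on each $K$ we then pick the remaining (bubble) coefficient so that $\fint_K\bI_h\ipt{\bw}=\fint_K\ipt{\bw}$, which is permissible because the bubble coefficient is unconstrained in $\bV_h$ (cf.~\eqref{eqn:MiniDecomp}). This yields $\bI_h\ipt{\bw}\in\bV_h$, depending linearly on $\bw$.

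Now fix $K\in\calT_h$ with vertices $a_1,a_2,a_3$ and let $\calI_K\ipt{\bw}$ be the standard MINI nodal interpolant of $\ipt{\bw}|_K$ on the flat triangle $K$: the affine field in the tangent plane of $K$ taking the values $\ipt{\bw}|_K(a_i)$ (which are tangential to $K$, since $\calP_{\bp^{-1}}$ produces $\Gamma_h$-tangential fields) plus the multiple of $b_K$ enforcing $\fint_K\calI_K\ipt{\bw}=\fint_K\ipt{\bw}$. Because $\calI_K$ reproduces affine tangential fields, the Bramble--Hilbert lemma and the usual $h$-scaling on a shape-regular triangle of diameter $h$ give, for $m=0,1$,
\[
h^m\|\ipt{\bw}-\calI_K\ipt{\bw}\|_{H^m(K)}\lesssim h^2\,|\ipt{\bw}|_{H^2(K)}\lesssim h^2\,\|\bw\|_{H^2(K^\gamma)},
\]
the last step by Lemma~\ref{lem:PiolaNormEquiv}.

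It remains to bound $\bz_K:=\calI_K\ipt{\bw}-\bI_h\ipt{\bw}$, which is again a linear-plus-bubble tangential field on $K$ (an element of $\calP_{F_K}\hat{\bV}$). By construction $\fint_K\bz_K=0$, while at each vertex $\bz_K(a_i)=\ipt{\bw}|_K(a_i)-\calM_{a_i}^K\big(\ipt{\bw}|_{K_{a_i}}(a_i)\big)$, which Lemma~\ref{lem:PiolaD} — applied to the tangent vector $\bu=\bw(\bp(a_i))$, tangential at $\bp(a_i)$ since $\bw\in\bH^1_T(\gamma)$ — bounds by $\lesssim h^2|\bw(\bp(a_i))|$. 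Since a linear-plus-bubble tangential field on $K$ with zero mean and vertex values of magnitude $\rho$ satisfies $\|\cdot\|_{L_2(K)}\lesssim h\rho$ and $|\cdot|_{H^1(K)}\lesssim\rho$, taking $\rho=h^2\max_i|\bw(\bp(a_i))|$ gives $h^m\|\bz_K\|_{H^m(K)}\lesssim h^3\max_i|\bw(\bp(a_i))|$ for $m=0,1$. A scaled Sobolev inequality on $K^\gamma$ (equivalently on $K$ via \eqref{eqn:extEquivalence}) then yields $\max_i|\bw(\bp(a_i))|\le\|\bw\|_{L_\infty(K^\gamma)}\lesssim h^{-1}\|\bw\|_{H^2(K^\gamma)}$, so $h^m\|\bz_K\|_{H^m(K)}\lesssim h^2\|\bw\|_{H^2(K^\gamma)}$. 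Combining with the previous display via the triangle inequality and $K^\gamma\subset\omega_{K^\gamma}$ completes the proof.

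\textbf{Where the difficulty lies.} The substantive point is the third paragraph: the naive interpolant $\calI_K\ipt{\bw}$ does not belong to $\bV_h$ because it violates the $\calM_a^K$ inter-element conditions, so one is forced to estimate the correction $\bz_K$. The $O(h^2)$ ``Piola defect'' supplied by Lemma~\ref{lem:PiolaD} is precisely what survives after one power of $h$ is lost in the $L_\infty$ bound of the vertex values, and it is exactly enough to retain the optimal $O(h^2)$ rate; an $O(h)$ defect would cost a full power of $h$ in the $L_2$ estimate.
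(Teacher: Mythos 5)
Your proof is correct and follows essentially the same route as the paper: prescribe the master-element vertex values $\ipt{\bw}_{K_a}(a)$, propagate them via $\calM_a^K$, compare the result with a discontinuous elementwise nodal interpolant, and absorb the $O(h^2)$ Piola defect from Lemma \ref{lem:PiolaD} after losing one factor of $h$ in a sup-norm bound of the vertex values. The only cosmetic differences are that you fold the mean-matching bubble into the interpolant itself (the paper adds it later, in the inf-sup argument) and you control the vertex values by a scaled Sobolev embedding of $\bw$ on $K^\gamma$ rather than the paper's inverse estimate for the elementwise interpolant on the patch $\omega_K$.
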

\begin{proof}
Given  $\bw\in  \bC(\gamma)\cap \bH^1_T(\gamma)\cap \bH_h^2(\gamma)$,
we uniquely define $\bv:=\bI_h \ipt{\bw}\in \bV_h$ such that each
component of $\bv$ is a piecewise linear polynomial
and satisfies
\begin{equation}\label{eqn:InterpvhDOFs}
\begin{aligned}
\bv_{K_a}(a) &= \ipt \bw_{K_a}(a)\qquad &&\forall a\in \calV_h.
\end{aligned}
\end{equation}

Let $\ipt{\bw}_I$
be the elementwise (discontinuous), linear interpolant of $\ipt{\bw}$ with respect to the vertices in $\calT_h$, i.e.,
$(\ipt{\bw}_I)_K\in \bpol_1(K)$ with $(\ipt{\bw}_I)_K(a) = \ipt{\bw}_K(a)$ for all $K\in \calT_h$ and $a\in \calV_K$.
By Lemma \ref{lem:PiolaD} (with $\bu = \bw(a)$), \eqref{eqn:InterpvhDOFs},
and the definition of $\bV_h$ we have for each vertex $a\in \calV_h$,
\begin{align*}
|(\ipt{\bw}_I - \bv)_K(a)| = |(\ipt{\bw}_K - \calM_a^K \bv_{K_a})(a)| =  |(\ipt{\bw}_K - \calM_a^K \ipt{\bw}_{K_a})(a)| \lesssim 
h^2 |\ipt{\bw}_{K_a}(a)|
\qquad \forall K\in \calT_a.
\end{align*}
Consequently, by standard inverse estimates,
\begin{align*}
h^m \|\ipt{\bw}_I- \bv\|_{H^m(K)}
&\lesssim  \|\ipt{\bw}_I - \bv\|_{L_2(K)}\\
&\lesssim h\|\ipt{\bw}_I-\bv\|_{L_\infty(K)} = 
h\max_{a\in \calV_K} |(\ipt{\bw}_I - \bv)_K(a)|\lesssim h^3 \max_{a\in \calV_K}  |\ipt{\bw}_{K_a}(a)|\quad m=0,1.
\end{align*}
Using inverse estimates once again, 
and applying standard interpolation results yields
\begin{align*}
\max_{a\in \calV_K}  |\ipt{\bw}_{K_a}(a)|
&\le \|\ipt{\bw}_I\|_{L_\infty(\omega_K)}\\
&\lesssim h^{-1}\|\ipt{\bw}_I\|_{L_2(\omega_K)}\\
&\lesssim h^{-1} (\|\ipt\bw\|_{L_2(\omega_K)}+\|\ipt{\bw} - \ipt{\bw}_I\|_{L_2(\omega_K)})\\
&\lesssim h^{-1} (\|\ipt\bw\|_{L_2(\omega_K)}+ h^2\ |\ipt{\bw}|_{H^2_h(\omega_K)}).
\end{align*}
Therefore,
\begin{align*}
h^m \|\ipt{\bw}_I - \bv\|_{H^m(K)}\lesssim  h^2 \|\ipt{\bw}\|_{H^2_h(\omega_K)}\qquad m=0,1,
\end{align*}
and so by \eqref{eqn:PiolaNormEquiv},
\begin{align*}
h^m \|\ipt{\bw}-{\bv}\|_{H^m(K)}
&\lesssim h^m \|\ipt{\bw}-\ipt{\bw}_I\|_{H^m(K)}+h^m \|\ipt{\bw}_I-\bv\|_{H^m(K)}\lesssim h^2 \|\bw\|_{H^2_h(\omega_{K^\gamma})}.
\end{align*}
\end{proof}

\begin{lemma}\label{lem:InfSup}
There exists a constant $\alpha>0$ independent
of $h$ such that
\begin{equation}\label{eqn:dInfSup}
\sup_{\bv\in \bV_h\backslash \{0\}} \frac{\int_{\Gamma_h} ({\rm div}_{\Gamma_h} \bv)q}{\|\bv\|_{H^1_h(\Gamma_h)}}\ge \alpha \|q\|_{L_2(\Gamma_h)}\qquad \forall q\in Q_h.
\end{equation}
\end{lemma}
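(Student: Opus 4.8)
Our plan is to prove \eqref{eqn:dInfSup} by a Fortin-type argument: for an arbitrary $q\in Q_h\setminus\{0\}$ we construct $\bv\in\bV_h$ with $\int_{\Gamma_h}({\rm div}_{\Gamma_h}\bv)\,q\gtrsim\|q\|_{L_2(\Gamma_h)}^2$ and $\|\bv\|_{H^1_h(\Gamma_h)}\lesssim\|q\|_{L_2(\Gamma_h)}$; using $\bv$ as a competitor in the supremum then yields \eqref{eqn:dInfSup} with $\alpha$ the quotient of the two hidden constants. To build a good candidate we first pass to the exact surface. Since $q\in\mathring{L}_2(\Gamma_h)$ and $|1-\mu_h|\lesssim h^2$ by \eqref{eqn:muBound}, the lift $q^\ell$ has mean value of size $O(h^2\|q\|_{L_2(\Gamma_h)})$ on $\gamma$; set $\bar q:=q^\ell-\fint_\gamma q^\ell\in\mathring{L}_2(\gamma)$, so $\|\bar q\|_{L_2(\gamma)}\approx\|q\|_{L_2(\Gamma_h)}$ for $h$ small. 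Because $q$ is continuous on $\Gamma_h$ and $\bp^{-1}$ is a smooth diffeomorphism, $q^\ell$ — hence $\bar q$ — actually lies in $H^1(\gamma)$, with $\|\bar q\|_{H^1(\gamma)}\lesssim h^{-1}\|q\|_{L_2(\Gamma_h)}$ by the chain rule and an elementwise inverse estimate. Let $\phi\in H^3(\gamma)\cap\mathring{L}_2(\gamma)$ solve the Laplace--Beltrami equation $\Delta_\gamma\phi=\bar q$ and set $\bw:=\nab_\gamma\phi$. By standard elliptic regularity on the smooth closed surface $\gamma$, $\bw\in\bH_T^1(\gamma)\cap\bH^2(\gamma)$ — in particular $\bw$ lies in the space appearing in Lemma \ref{lem:Interp} — with ${\rm div}_\gamma\bw=\bar q$, $\|\bw\|_{H^1(\gamma)}\lesssim\|\phi\|_{H^2(\gamma)}\lesssim\|q\|_{L_2(\Gamma_h)}$, and $\|\bw\|_{H^2(\gamma)}\lesssim\|\phi\|_{H^3(\gamma)}\lesssim\|\bar q\|_{H^1(\gamma)}\lesssim h^{-1}\|q\|_{L_2(\Gamma_h)}$.

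Next we define $\bv$. Let $\bv_L:=\bI_h\ipt{\bw}\in\bV_h$ be the interpolant of Lemma \ref{lem:Interp}, and on each $K\in\calT_h$ set $\bv|_K:=\bv_L|_K+60\,b_K\fint_K(\ipt{\bw}-\bv_L)$. The added term equals $b_K$ times a constant vector tangent to $K$ (both $\ipt{\bw}|_K$ and $\bv_L|_K$ are tangent to $K$), hence it lies in the local MINI space $\calP_{F_K}(\hat{\bV})$ because $J^{-1}DF_K$ maps $\bbR^2$ onto the tangent plane of $K$; it also vanishes at every vertex, so $\bv$ satisfies the same $\calM_a^K$ constraints as $\bv_L$ and therefore $\bv\in\bV_h$. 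Since $\fint_K b_K=\frac{1}{60}|K|$, we get $\int_K\bv=\int_K\ipt{\bw}$ for every $K\in\calT_h$. To evaluate the divergence pairing we integrate by parts elementwise: as $\bv\in\bV_h\subset\bH({\rm div}_{\Gamma_h};\Gamma_h)$ (Proposition \ref{prop:Hdiv}), the interelement terms cancel by \eqref{eqn:HdivProp} together with the continuity of $q$, so $\int_{\Gamma_h}({\rm div}_{\Gamma_h}\bv)\,q=-\int_{\Gamma_h}\bv\cdot\nab_{\Gamma_h}q$. Because $\nab_{\Gamma_h}q$ is elementwise a constant tangent vector, the right-hand side depends on $\bv$ only through the means $\int_K\bv=\int_K\ipt{\bw}$, hence equals $-\int_{\Gamma_h}\ipt{\bw}\cdot\nab_{\Gamma_h}q=\int_{\Gamma_h}({\rm div}_{\Gamma_h}\ipt{\bw})\,q$ (the same integration by parts now for $\ipt{\bw}$, which lies in $\bH({\rm div}_{\Gamma_h};\Gamma_h)$ since $\bw\in\bH({\rm div}_\gamma;\gamma)$). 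By \eqref{eqn:PiolaInv1} this equals $\int_\gamma({\rm div}_\gamma\bw)\,q^\ell$, and since $\int_\gamma{\rm div}_\gamma\bw=0$ and $q^\ell=\bar q+\fint_\gamma q^\ell$, it further equals $\int_\gamma({\rm div}_\gamma\bw)\,\bar q=\|\bar q\|_{L_2(\gamma)}^2\gtrsim\|q\|_{L_2(\Gamma_h)}^2$.

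Finally we bound $\|\bv\|_{H^1_h(\Gamma_h)}$. From Lemma \ref{lem:PiolaNormEquiv} and the bounds on $\bw$ above, $\|\ipt{\bw}\|_{H^1_h(\Gamma_h)}\lesssim\|\bw\|_{H^1(\gamma)}\lesssim\|q\|_{L_2(\Gamma_h)}$, while Lemma \ref{lem:Interp} and the finite overlap of the patches $\omega_{K^\gamma}$ give $\|\ipt{\bw}-\bv_L\|_{L_2(\Gamma_h)}\lesssim h^2\|\bw\|_{H^2_h(\gamma)}\lesssim h\|q\|_{L_2(\Gamma_h)}$ and $\|\ipt{\bw}-\bv_L\|_{H^1_h(\Gamma_h)}\lesssim h\|\bw\|_{H^2_h(\gamma)}\lesssim\|q\|_{L_2(\Gamma_h)}$; hence $\|\bv_L\|_{H^1_h(\Gamma_h)}\lesssim\|q\|_{L_2(\Gamma_h)}$. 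For the bubble correction, an elementwise inverse estimate with $\|b_K\|_{L_2(K)}\lesssim h$ and $|\fint_K(\ipt{\bw}-\bv_L)|\lesssim h^{-1}\|\ipt{\bw}-\bv_L\|_{L_2(K)}$ gives $\big\|b_K\fint_K(\ipt{\bw}-\bv_L)\big\|_{H^1(K)}\lesssim h^{-1}\|\ipt{\bw}-\bv_L\|_{L_2(K)}$; summing over $K$ and using the $L_2$ bound above yields $h^{-1}\cdot h\|q\|_{L_2(\Gamma_h)}=\|q\|_{L_2(\Gamma_h)}$ for the bubble part. Altogether $\|\bv\|_{H^1_h(\Gamma_h)}\lesssim\|q\|_{L_2(\Gamma_h)}$, which closes the argument.

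The step I expect to require the most care is the regularity bookkeeping: one must confirm that $q^\ell$ genuinely lies in $H^1(\gamma)$ — so that the Laplace--Beltrami solution is in $H^3(\gamma)$ and $\bw$ in $\bH^2(\gamma)$, meeting the hypotheses of Lemma \ref{lem:Interp} — and then track the single negative power of $h$ in $\|\bw\|_{H^2(\gamma)}$, which must be absorbed exactly by the $h^2$ (resp.\ $h$) in Lemma \ref{lem:Interp} and by the $h^{-1}$ in the bubble inverse estimate, leaving an $h$-independent constant. The Piola identities \eqref{eqn:PiolaInv1} and the in-plane normal continuity of $\bV_h$ are already in hand from the excerpt, and the only new observation specific to the MINI construction is that the bubble correction can be chosen tangent to $K$ and vanishing at all vertices, so that it stays in $\bV_h$ while repairing the elementwise means.
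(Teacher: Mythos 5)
Your proof is correct, and its skeleton is the same as the paper's: build $\bw$ with ${\rm div}_\gamma\bw$ equal (up to controlled perturbations) to the lifted pressure, interpolate its inverse Piola transform into $\bV_h$, repair the element means with the cubic bubble so that $\int_K\bv=\int_K\ipt{\bw}$, and then use $\bH({\rm div}_{\Gamma_h};\Gamma_h)$-conformity, integration by parts, and \eqref{eqn:PiolaInv1} to reduce the pairing to $\int_\gamma({\rm div}_\gamma\bw)q^\ell$. Where you genuinely diverge is in how the regularity needed for Lemma \ref{lem:Interp} is obtained, and in the choice of divergence data. The paper takes a generic right inverse of ${\rm div}_\gamma$ producing only $\bw\in\bH^1_T(\gamma)$ with data $(\mu_h^{-1}q)^\ell$ (which makes the final identity exact, $\int_\gamma({\rm div}_\gamma\bw)q^\ell=\|q\|_{L_2(\Gamma_h)}^2$), and then manufactures the required piecewise-$H^2$, continuous, tangential field by a Scott--Zhang interpolation of $\bw^e$ followed by tangential projection, at the cost of the extra stability bookkeeping on enlarged patches $\omega_K'$. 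You instead solve $\Delta_\gamma\phi=\bar q$ with the (mean-corrected) lifted discrete pressure as data, so that $\bw=\nab_\gamma\phi$ is globally $\bH^2$ and Lemma \ref{lem:Interp} applies directly; the price is the inverse-estimate blow-up $\|\bw\|_{H^2(\gamma)}\lesssim h^{-1}\|q\|_{L_2(\Gamma_h)}$, which is exactly absorbed by the $h^2$ (resp.\ $h$) factors in Lemma \ref{lem:Interp} and the bubble inverse estimate, plus the $O(h^2)$ mean and $\mu_h$ corrections since you use $\bar q$ rather than $(\mu_h^{-1}q)^\ell$. Your route avoids the Scott--Zhang intermediate step and keeps the patch bookkeeping lighter, but it leans on $H^3$ elliptic regularity of the Laplace--Beltrami operator on $\gamma$ and on an inverse estimate for $Q_h$ (so constants now pass through the regularity theory and quasi-uniformity), whereas the paper's argument needs only the $\bH^1_T$ surjectivity of the surface divergence and is in that sense more robust with respect to the regularity of the auxiliary field.
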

\begin{proof}
Fix $ q\in Q_h$, and 
let $\bw\in \bH^1_T(\gamma)$ satisfy \cite{JankuhnEtal18,DziukElliott13,Demlow09}
\begin{equation}\label{eqn:wDef}
{\rm div}_\gamma \bw = (\mu_h^{-1}q)^\ell\in \mathring{L}_2(\gamma),\quad\text{and}\quad
\|\bw\|_{H^1(\gamma)}\lesssim \|(\mu_h^{-1}q)^\ell\|_{L_2(\gamma)}\lesssim \|q\|_{L_2(\Gamma_h)},
\end{equation}
where we used \eqref{eqn:extEquivalence} and \eqref{eqn:muBound} 
in the last step.  Let $\bI^{SZ}_h \bw^e$ 
be the Scott-Zhang interpolant of the extension $\bw^e\in \bH^1(\Gamma_h)$
onto the space of continuous piecewise linear polynomials with respect to $\calT_h$
\cite{CamachoDemlow15,DemlowDziuk07}, and set  $\bw_h = \bPi (\bI^{SZ}_h \bw^e)^\ell \in \bH^1_T(\gamma)$.
From \eqref{eqn:extEquivalence}, $\bw-\bw_h = \Pi (\bw-\bI^{SZ}_h \bw^e)$, and approximation properties of the Scott-Zhang interpolant,
there holds on each $K\in \calT_h$,
\begin{equation}\label{eqn:SZL2}
\begin{split}
\|\bw_h\|_{H^1(K^\gamma)}+h^{-1} \|\bw - \bw_h\|_{L_2(K^\gamma)}
&\lesssim
\|(\bI^{SZ}_h \bw^e)^\ell\|_{H^1(K^\gamma)}+h^{-1} \|\bw - (\bI^{SZ}_h \bw^e)^\ell\|_{L_2(K^\gamma)} \\
&\lesssim
\|\bI^{SZ}_h \bw^e\|_{H^1(K)}+h^{-1} \|\bw^e - \bI^{SZ}_h \bw^e\|_{L_2(K)} \\
&\lesssim \|\bw^e\|_{H^1(\omega_K)}\lesssim  \|\bw\|_{H^1(\omega_{K^\gamma})}.
\end{split}
\end{equation}

Noting $\bw_h\in \bC(\gamma)\cap \bH^1_T(\gamma)\cap \bH_h^2(\gamma)$,
we define $\bv\in\bV_h$ such that
\[
\bv_K = \big({\bI_h \ipt{\bw}_h}\big)_K + 60 b_K \fint_K (\ipt{\bw}-{\bI_h \ipt{\bw}_h})\qquad \forall K\in \calT_h,
\]
where $\bI_h \ipt{\bw}_h$ is given in Lemma \ref{lem:Interp}.
We then have $\int_K \bv = \int_K \ipt{\bw}$, and by \eqref{eqn:SZL2} and Lemma \ref{lem:Interp},
\begin{align*}
\|\bv\|_{H^1(K)}
&\le \|\bI_h \ipt{\bw}_h\|_{H^1(K)}
+\|\bv-{\bI_h \ipt{\bw}_h}\|_{H^1(K)}\\
&\lesssim \|\ipt{\bw}\|_{H^1(K)}+\|\ipt{\bw}-\bI_h \ipt{\bw}_h\|_{H^1(K)}+ h^{-1}\|\ipt{\bw}-{\bI_h \ipt{\bw}_h}\|_{L_2(K)}\\
&\le \|\ipt{\bw}\|_{H^1(K)}+\|\ipt{\bw}-\ipt{\bw}_h\|_{H^1(K)}+\|\ipt{\bw}_h-{\bI_h \ipt{\bw}_h}\|_{H^1(K)}
\\ & ~~~~~~~+h^{-1} (\|\ipt{\bw}-\ipt{\bw}_h\|_{L_2(K)}+\|\ipt{\bw}_h-{\bI_h \ipt{\bw}_h}\|_{L_2(K)})\\
&\lesssim \|\bw\|_{H^1(\omega_{K^\gamma})} + h\|\bw_h\|_{H^2_h(\omega_{K^\gamma})}.
%
\end{align*}
By \eqref{eqn:extEquivalence}, a standard inverse estimate, and the $H^1$-stability
properties of the Scott-Zhang interpolant,
\[
h\|\bw_h\|_{H^2_h(\omega_{K^\gamma})}\lesssim h \|(\bI^{SZ}_h \bw^e)^\ell\|_{H^2_h(\omega_{K^\gamma})}\lesssim h\|\bI^{SZ}_h \bw^e\|_{H^2_h(\omega_K)}
\lesssim \|\bI^{SZ}_h\bw^e\|_{H^1(\omega_K)}\lesssim \|\bw\|_{H^1(\omega'_{K^\gamma})},
\]
and so by \eqref{eqn:wDef},
\begin{align}\label{eqn:vhStable}
\|\bv\|_{H^1(K)}\lesssim \|\bw\|_{H^1(\omega'_{K_\gamma})}\ \forall K\in \calT_h\quad \Longrightarrow\quad \|\bv\|_{H^1_h(\Gamma_h)}\lesssim \|\bw\|_{H^1(\gamma)}\lesssim \|q\|_{L_2(\Gamma_h)}.
\end{align}

Next, we recall from Proposition \ref{prop:Hdiv} that $\bv\in \bH({\rm div}_{\Gamma_h};\Gamma_h)$
and therefore \eqref{eqn:HdivProp} is satisfied.
Thus, by integration by parts, the identity $\int_K \bv = \int_K \ipt{\bw}$,
and applying 
\eqref{eqn:PiolaInv1} yields
\begin{align*}
\int_{\Gamma_h} ({\rm div}_{\Gamma_h} \bv) q
&= -\int_{\Gamma_h} \bv\cdot \nab_{\Gamma_h} q 
= -\int_{\Gamma_h} \ipt\bw \cdot \nab_{\Gamma_h} q 
= \int_{\Gamma_h} ({\rm div}_{\Gamma_h} \ipt \bw)  q 
 = \int_\gamma  ({\rm div}_\gamma \bw) q^\ell. 
\end{align*}
We then use \eqref{eqn:wDef}, \eqref{eqn:muBound}, and \eqref{eqn:extEquivalence}
to obtain
\begin{align*}
\int_{\Gamma_h} ({\rm div}_{\Gamma_h} \bv) q
\gtrsim \|q\|_{L_2(\Gamma_h)}^2.
\end{align*}
This identity combined with \eqref{eqn:vhStable} completes the proof.
\end{proof}



\subsection{$\textit{\textbf{H}}^{\hspace{1pt} 1}_T$-conforming approximations to discrete functions}
 While the finite element space
$\bV_h$ is merely $\bH({\rm div}_{\Gamma_h};\Gamma_h)$-conforming (cf.~Proposition \ref{prop:Hdiv}),
the following lemma shows that functions
in this space are ``close'' to an $\bH^1$-conforming relative.
 


\begin{lemma}\label{lem:H1_approx}
Given $\bv \in \bV_h$, denote by $\pt{\bv} = \calP_{\bp} \bv$ its Piola transform via the closest point projection to $\gamma$.  Then there exists $\pt \bv_c \in \bH_T^1(\gamma)$ such that 
\begin{align}
\label{H1_approx}
\|\pt{\bv} - \pt{\bv}_c \|_{L_2(K^\gamma)} + h |\pt{\bv} - \pt \bv_c |_{H_h^1(K^\gamma)} \lesssim h^2 \|\pt{\bv} \|_{L_2(K^\gamma)}\quad \forall K^\gamma \in \calT_h^\gamma.
\end{align}
\end{lemma}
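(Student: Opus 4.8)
The plan is to construct $\pt\bv_c$ explicitly from the Lagrange data of $\bv$, treating the cubic-bubble part of the MINI space separately. Write $\bv = \bv_L + \bb$ with $\bv_L$ the piecewise-linear part of $\bv$ and $\bb$ its bubble part (cf.\ \eqref{eqn:MiniDecomp}). Since $b_K$ vanishes on $\p K$, each $\bb|_K$ vanishes on $\p K$, so $\bb$ is tangential to $\Gamma_h$ and has zero trace on every mesh edge; hence $\pt\bb := \calP_\bp\bb$ is tangential to $\gamma$, vanishes on every lifted edge $\bp(e)$, and is therefore globally continuous, i.e.\ $\pt\bb \in \bH^1_T(\gamma)$. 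It thus suffices to produce an $\bH^1_T(\gamma)$ approximation $\pt{(\bv_L)}_c$ of $\pt{(\bv_L)} := \calP_\bp\bv_L$ satisfying the desired local bound, and then set $\pt\bv_c := \pt{(\bv_L)}_c + \pt\bb$, so that $\pt\bv - \pt\bv_c = \pt{(\bv_L)} - \pt{(\bv_L)}_c$.

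For the linear part, I would let $\bz_h$ be the globally continuous, piecewise-linear, $\RRR^3$-valued Lagrange field on $\Gamma_h$ with the \emph{single-valued} nodal data $\bz_h(a) := \bv_{K_a}(a)$ for $a \in \calV_h$ (continuity is automatic since the nodal data is single-valued), and define $\pt{(\bv_L)}_c := \bPi\,\bz_h^\ell$, which lies in $\bH^1_T(\gamma)$ because $\bz_h \in \bH^1(\Gamma_h)$ and $\bPi$ is smooth. On a fixed $K^\gamma$ I split (with $\bv_L^\ell$ understood elementwise)
\[
\pt{(\bv_L)} - \bPi\,\bz_h^\ell \;=\; \underbrace{\big(\calP_\bp\bv_L - \bPi\,\bv_L^\ell\big)}_{=:\,E_1} \;+\; \underbrace{\bPi\,(\bv_L - \bz_h)^\ell}_{=:\,E_2}\qquad \text{on } K^\gamma .
\]
The term $E_1$ is a pure geometric-consistency error: by \eqref{eqn:Piola}, $E_1\circ\bp = M\,\bv_L$ with $M := \mu_h^{-1}[\bPi - d\bH] - \bPi\circ\bp$, and using $|d| \lesssim h^2$, $|1-\mu_h| \lesssim h^2$, $|\bnu-\bnu_h| \lesssim h$ together with smoothness of $\bnu,\bH$ on the tube one checks $\|M\|_{L_\infty(K)} \lesssim h^2$ and $\|\nab_{\Gamma_h} M\|_{L_\infty(K)} \lesssim h$; combined with an inverse estimate for the linear $\bv_L$ and the change of variables between $K$ and $K^\gamma$, this gives $\|E_1\|_{L_2(K^\gamma)} + h\,|E_1|_{H^1(K^\gamma)} \lesssim h^2 \|\bv_L\|_{L_2(K)}$. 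For $E_2$, write $(\bv_L - \bz_h)|_K = \sum_{a\in\calV_K} \lambda_a^K\,\bc_a$ with $\bc_a := \bv_K(a) - \bv_{K_a}(a) = (\calM_a^K - \bI)\bv_{K_a}(a)$; since $\bv_{K_a}(a)\cdot\bnu_{K_a} = 0$ one gets $|\bc_a| \lesssim h\,|\bv_{K_a}(a)|$ with the $O(h)$ part a multiple, up to $O(h^2)$, of $\bnu_{K_a}$, and the crucial gain is $|\bPi(\bp(x))\,\bc_a| \lesssim h^2 |\bv_{K_a}(a)|$ for $x\in K$, obtained from $\bPi(\bp(x))\bnu_{K_a} = \bPi(\bp(x))(\bnu_{K_a}-\bnu(\bp(x)))$ and $|\bnu_{K_a}-\bnu(\bp(x))| \lesssim h$. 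The same cancellation, after applying the chain rule, controls $\nab_\gamma E_2$, and with the inverse estimate $|\bv_{K_a}(a)| \approx |\bv_K(a)| \lesssim h^{-1}\|\bv_L\|_{L_2(K)}$ (since $\calM_a^K$ is a near-isometry on vectors tangent to $K_a$ and $\bv_K(a)$ is a Lagrange value of the linear $\bv_L|_K$) one obtains $\|E_2\|_{L_2(K^\gamma)} + h\,|E_2|_{H^1(K^\gamma)} \lesssim h^2 \|\bv_L\|_{L_2(K)}$.

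To finish, one notes that $\bpol_1(K)$ and $b_K\bpol_0(K)$ are complementary subspaces of a fixed finite-dimensional space on the reference element, so $\|\bv_L\|_{L_2(K)} \lesssim \|\bv\|_{L_2(K)}$, and $\|\bv\|_{L_2(K)} \approx \|\pt\bv\|_{L_2(K^\gamma)}$ by the change of variables (alternatively Lemma~\ref{lem:PiolaNormEquiv}); combining these with $\pt\bv - \pt\bv_c = E_1 + E_2$ yields \eqref{H1_approx}. I expect the estimate of $E_2$ to be the main obstacle: a priori $\bv_L$ jumps by $O(h)$ across mesh edges, so $\bz_h$ is only an $O(h)$-accurate substitute on $\Gamma_h$, and the extra power of $h$ is recovered \emph{only} after the tangential projection $\bPi$ — precisely because the $O(h)$ discrepancy $(\calM_a^K - \bI)\bv_{K_a}(a)$ is, up to $O(h^2)$, normal to $K_a$ and $\bnu_{K_a}$ is $O(h)$-close to the true normal $\bnu$ annihilated by $\bPi$; this is the same $O(h^2)$ perturbation mechanism quantified in Lemma~\ref{lem:PiolaD}. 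A secondary care point is verifying $\|M\|_{L_\infty} \lesssim h^2$ and $\|\nab M\|_{L_\infty} \lesssim h$, since several constituent quantities (e.g.\ $\nab_{\Gamma_h}\mu_h$ and $\nab_{\Gamma_h}d$) are only $O(h)$ rather than $O(h^2)$ and the small factors must be tracked through the products.
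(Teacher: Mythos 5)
Your construction is correct and essentially mirrors the paper's proof: both build the conforming relative from the continuous piecewise-linear field on $\Gamma_h$ carrying the single-valued nodal data $\bv_{K_a}(a)$ (the paper's $\bw$, your $\bz_h$), keep the bubble part unchanged, and extract the extra power of $h$ from the fact that $(\calM_a^K-\bI)\bv_{K_a}(a)$ is, up to $O(h^2)$, a multiple of $\bnu_{K_a}$, which the (near-)tangential projection reduces by a further factor of $h$ — the same mechanism as in Lemma \ref{lem:PiolaD}. The only real difference is cosmetic: the paper maps $\bw^\ell$ into the tangent bundle with the Piola-type matrix $\mu_h^{-1}\,\bnu\cdot\bnu_h\,(\bPi-d\bH)$ rather than your plain $\bPi$, which removes your extra geometric term $E_1$; since the two matrices differ by $O(h^2)$ with $O(h)$ tangential gradient, your variant yields the same bounds.
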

\begin{proof}
On an element $K\in \calT_h$, we first write $\bv=\bv_{L} + \balpha b_K$, where $\bv_{L}$ is componentwise 
affine on $K$, and ${\balpha}\in \bbR^3$ is tangent to $K$ (cf.~\eqref{eqn:MiniDecomp}).  Likewise, $\pt \bv = \pt \bv_{L} + \pt{\balpha} b_K^\ell$ is
the Piola transform of $\bv$ to $\gamma$.   We next let $\bw$ be the unique
continuous piecewise linear polynomial with respect to $\calT_h$ satisfying $\bw(a)=\bv_{K_a}(a)$ for each vertex $a\in \calV_h$.
We then set
\begin{align*}
\pt \bv_c=\frac{\bPi-d\bH}{(1-d \kappa_1)(1-d\kappa_2)} \bw^\ell + \pt{\balpha} b_K^\ell = \mu_h^{-1} \bnu\cdot \bnu_h (\bPi-d\bH) \bw^\ell + \pt{\balpha} b_K^\ell.
\end{align*}
Note that $\pt \bv_{c} \in \bH_T^1(\gamma)$, and 
\begin{align*}
\pt \bv - \pt \bv_c = \pt \bv_{L} - \mu_h^{-1} \bnu\cdot \bnu_h (\bPi-d\bH) \bw^\ell.
\end{align*}
Fixing $K \in \calT_h$, by norm equivalence (cf.~\eqref{eqn:PiolaNormEquiv}) we  prove \eqref{H1_approx} by establishing that 
\begin{align}
\label{toprove}
\|\bv_{L} - \iptW {\mu_h^{-1} \bnu \cdot \bnu_h (\bPi-d\bH) \bw^\ell} \|_{L_2(K)} + h |\bv_{L} - \iptW {\mu_h^{-1} \bnu \cdot \bnu_h (\bPi-d\bH) \bw^\ell}  |_{H_h^1(K)} \lesssim h^2 \|\bv \|_{L_2(K)},
\end{align}
{where $\iptW {\mu_h^{-1} \bnu \cdot \bnu_h (\bPi-d\bH) \bw^\ell} = \calP_{\bp|_K^{-1}}\mu_h^{-1} \bnu \cdot \bnu_h (\bPi-d\bH) \bw^\ell$.} 
We show \eqref{toprove} in three steps.

(i) Employing \eqref{eqn:invPiola}, \eqref{eqn:muForm}, and $\bPi_h (\bnu \cdot \bnu_h - \bnu \otimes \bnu_h) \bPi = \bnu \cdot \bnu_h - \bnu \otimes \bnu_h$, we have that
\begin{equation*}
\iptW {\mu_h^{-1} \bnu \cdot \bnu_h (\bPi-d\bH) \bw^\ell} = \bPi_K \left((\bnu \cdot \bnu_K) {{\bf I}}-\bnu \otimes \bnu_K\right) \bw.
\end{equation*}
Here we interchangeably write $\bnu_h=\bnu_K=\bnu_h|_K$ and $\bPi_h =\bPi_K$ in order to better distinguish dependence on the element $K$.  
Using $\bv_L \cdot \bnu_K=0$ and $\bPi_K \bv_L=\bv_L$, we then have
\begin{align}\label{error_rep}
\begin{aligned}
\bv_L- & \iptW {\mu_h^{-1} \bnu \cdot \bnu_h (\bPi-d\bH) \bw^\ell} =\bPi_K  [ \bv_L-\left((\bnu \cdot \bnu_K){{\bf I}}-\bnu \otimes \bnu_K\right) \bw]
\\ & = [(1-\bnu \cdot \bnu_K) \bv_L ]+ [\bnu \cdot \bnu_K \bPi_K (\bv_L-\bw) ]+  [ \bnu_K \cdot (\bw- \bv_L) \bPi_K (\bnu-\bnu_K)]
\\ & =: I + II + III.
\end{aligned}
\end{align}

(ii)  We next bound the terms $I$, $II$, and $III$ in $L_2$.  Using $|1-\bnu \cdot \bnu_K| = \frac{1}{2} |\bnu-\bnu_K|^2 \lesssim h^2$ yields
\begin{align}\label{eqn:IBound}
\|I\|_{L_2(K)}  \lesssim h^2 \|\bv_L\|_{L_2(K)}.
\end{align}
Next we use \eqref{eqn:calMDef} and recall that $\bv_{K_a}(a) \cdot \bnu_{K_a}=0$ to compute that for each vertex $a \in K$
\begin{align}
\label{computation1}
\begin{aligned}
|\bPi_K (\bv_L-\bw)(a)| & = |\bPi_K (\calM_a^K-{\bf I}) \bv_{K_a}(a) |
\\ & = |\bPi_K  [\left((\bnu_{K_a}\cdot \bnu_K) {{\bf I}}-\bnu_{K_a} \otimes \bnu_K\right)-{\bf I}]\bv_{K_a}(a)|
\\& = |(\bnu_{K_a} \cdot \bnu_{K}-1) \bPi_K \bv_{K_a}(a)-  (\bnu_{K}-\bnu_{K_a}) \cdot \bv_{K_a}(a) \bPi_K (\bnu_{K_a}-\bnu_K)|
\\ & \lesssim h^2 |\bv_{K_a}(a)| \lesssim {h^2} |\bv_K(a)|.
\end{aligned}
\end{align}
In the last step we have employed \eqref{eqn:Piola} and \eqref{eqn:invPiola} to obtain $|\bv_{K_a}(a)| = |\frac{1}{\bnu_K \cdot \bnu_{K_a}} \bPi_{K_a} \bv_K(a)| \lesssim |\bv_K(a)|$.  We then use the fact that $\bPi_K(\bv_L-\bw)$ and $\bv_L$ are affine, along with inverse inequalities, to obtain
\begin{align}
\label{computation2}
\begin{aligned}
\|II\|_{L_2(K)} &  \lesssim  \|\bPi_K(\bv_L-\bw)\|_{L_2(K)} \lesssim h \max_{a\in {\calV_K}} |\bPi_K (\bv_L-\bw)(a)| 
\\ & \lesssim h^3 \|\bv_L\|_{L_\infty(K)} \lesssim h^2\|\bv_L\|_{L_2(K)}.
\end{aligned}
\end{align}
In order to bound $III$, we first proceed similarly {as} \eqref{computation1} to obtain
\begin{align*}
|(\bv_L-\bw)(a)|= |(\bnu_{K_a} \cdot \bnu_{K}-1) \bv_{K_a}(a) - ( \bnu_K-\bnu_{K_a}) \cdot \bv_{K_a}(a) \bnu_{K_a}| \lesssim h|\bv_K(a)|.
\end{align*}
Using $|\bPi_K (\bnu-\bnu_K)| \lesssim h$, we thus have similar to above that
\begin{align}
\label{computation3}
\|III\|_{L_2(K)} \lesssim h \|\bv_L-\bw\|_{L_2(K)} \lesssim h^2 \|\bv_L\|_{L_2(K)}.  
\end{align}
Recalling that $\bv=\bv_L+ \balpha b_K$ and $b_K(a)=0$ at vertices $a$, we again use inverse inequalities to obtain 
\begin{align}
\label{computation4}
\|\bv_L\|_{L_2(K)} \lesssim h \|\bv_L\|_{L_\infty(K)}= h \max_{a \in \calV_K} |\bv_L(a)| \le h \|\bv\|_{L_\infty(K)} {\lesssim} \|\bv\|_{L_2(K)}.
\end{align}
Collecting the  inequalities \eqref{eqn:IBound}, \eqref{computation2}--\eqref{computation4}
yields $\|\pt \bv- \pt{\bv}^c\|_{L_2(K^\gamma)} \lesssim h^2 \|\pt \bv\|_{L_2(K^\gamma)}.$

(iii) In order to bound $\|{\nabla_{\Gamma_h}}( \bv_L-  \iptW {\mu_h^{-1} \bnu \cdot \bnu_h (\bPi-d\bH) \bw^\ell})\|_{L_2(K)}$, we recall \eqref{error_rep} and consider first $\nabla_{\Gamma_h} I$.  First note that $|\nabla (1-\bnu \cdot \bnu_K)| = |\bH \bnu_K| = |\bH (\bnu_K-\bnu)| \lesssim h$.  Thus using an inverse inequality and $|1-\bnu\cdot \bnu_K| \lesssim h^2$, we obtain
\begin{align*}
\|\nabla_{\Gamma_h} I\|_{L_2(K)} \lesssim \|1-\bnu \cdot \bnu_K\|_{L_\infty(K)} \|\nabla_{\Gamma_h} \bv_L \|_{L_2(K)} 
+ \|\nabla(1-\bnu \cdot \bnu_K)\|_{L_\infty(K)} \|\bv_L\|_{L_2(K)} \lesssim h \|\bv_L\|_{L_2(K)}.
\end{align*}
Employing inverse inequalities, $|\nabla (\bnu \cdot \bnu_K)| \lesssim h$, and \eqref{computation2} also yields
\begin{align*}
\begin{aligned}
\|\nabla_{\Gamma_h} II \|_{L_2(K)} & \le \|\nabla ( \bnu \cdot \bnu_K)\|_{L_\infty(K)} \|\bPi_K (\bv_L-\bw)\|_{L_2(K)} + \|\bnu \cdot \bnu_K\|_{L_\infty(K)} \|\nabla_{\Gamma_h} [\bPi_K(\bv_L-\bw)]\|_{L_2(K)} 
\\ & \lesssim h\|\bPi_K (\bv_L-\bw)\|_{L_2(K)} + h^{-1} \| \bPi_K (\bv_L-\bw)\|_{L_2(K)} 
\\ & \lesssim h \|\bv_L\|_{L_2(K)}.
\end{aligned}
\end{align*}
We finally compute using inverse inequalities and \eqref{computation3} that
\begin{align*}
\begin{aligned}
\|\nabla_{\Gamma_h} III\|_{L_2(K)} & \lesssim \|\nabla [\bPi_K (\bnu-\bnu_K)]\|_{L_\infty(K)} \|\bv_L-\bw\|_{L_2(K)}
\\ & ~~~~~~~ + \|\bPi_K(\bnu-\bnu_K)\|_{L_\infty(K)} \|\nabla_{\Gamma_h}(\bv_L-\bw)\|_{L_2(K)}
\\ & \lesssim (1+h h^{-1}) \|\bv_L-\bw\|_{L_2(K)} \lesssim h \|\bv_L\|_{L_2(K)}.
\end{aligned}
\end{align*}
Collecting the above {inequalities}
and employing \eqref{computation4} yields
\[
\|\nabla_{\Gamma_h} (I+II+III)\|_{L_2(K)} \lesssim h \|\bv_L\|_{L_2(K)} \lesssim h \|\bv\|_{L_2(K)},
\]
which completes the proof.  
\end{proof}

\subsection{Discrete Korn-type inequalities}

From Lemma \ref{lem:H1_approx}, we immediately obtain
a discrete Korn-type inequality on the exact surface $\gamma$.
\begin{lemma}\label{eqn:Korn1}  Given $\bv \in \bV_h$, there holds
\begin{align}
\label{discrete_korn}
\|\pt \bv \|_{H_h^1(\gamma)} \lesssim \|\pt \bv \|_{L_2(\gamma)} + \|{\rm Def}_{\gamma,h} \pt \bv \|_{L_2(\gamma)},
\end{align}
where $\pt \bv = \calP_{\bp} \bv$.
\end{lemma}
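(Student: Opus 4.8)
The plan is to transfer the desired inequality on $\gamma$ to an inequality on the flat triangulation $\Gamma_h$, where a discrete Korn inequality for the piecewise-polynomial, $\bH(\mathrm{div})$-conforming space $\bV_h$ can be obtained via its $\bH^1$-conforming relative from Lemma \ref{lem:H1_approx}. First I would use Lemma \ref{lem:PiolaNormEquiv} (with $m=0,1$) together with \eqref{eqn:DefRelation} of Lemma \ref{lem:DefRelation} to reduce \eqref{discrete_korn} to the analogous estimate on $\Gamma_h$, namely
\begin{align*}
\|\bv\|_{H^1_h(\Gamma_h)}\lesssim \|\bv\|_{L_2(\Gamma_h)}+\|{\rm Def}_{\Gamma_h,h}\bv\|_{L_2(\Gamma_h)}\qquad\forall\,\bv\in\bV_h,
\end{align*}
modulo lower-order terms of size $O(h)\|\bv\|_{L_2}$ coming from the geometric perturbation in \eqref{eqn:DefDef}, which are harmless for $h$ small since they can be absorbed into the left-hand side.

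\textbf{The flat-surface estimate.} For the reduced inequality, let $\bv_c$ be the $\bH^1$-conforming function whose Piola transform is the $\pt\bv_c$ produced by Lemma \ref{lem:H1_approx}; equivalently, one works directly with the piecewise-linear-plus-bubble $\bH^1$-conforming relative of $\bv$ constructed in that proof (the $\bw$ there, plus the same bubble enrichment). Writing $\bv = \bv_c + (\bv-\bv_c)$, I would apply the standard (continuous) Korn inequality on $\gamma$ — or its piecewise-flat analogue on $\Gamma_h$, using that $\bv_c$ is globally $\bH^1$ and tangential — to control $\|\bv_c\|_{H^1}$ by $\|\bv_c\|_{L_2}+\|{\rm Def}\,\bv_c\|_{L_2}$, then bound the remainder $\bv-\bv_c$ in $H^1_h$ by $\lesssim h^{-1}\|\bv-\bv_c\|_{L_2}+|\bv-\bv_c|_{H^1_h}\lesssim h\|\bv\|_{L_2}$ via \eqref{H1_approx}. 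Triangle inequalities then give $\|\bv\|_{H^1_h}\lesssim \|\bv_c\|_{L_2}+\|{\rm Def}\,\bv_c\|_{L_2}+h\|\bv\|_{L_2}$, and another application of \eqref{H1_approx} converts the right-hand side back to $\|\bv\|_{L_2}+\|{\rm Def}_{\Gamma_h,h}\bv\|_{L_2}$ up to an $O(h)\|\bv\|_{L_2}$ term.

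\textbf{Main obstacle.} The delicate point is the Korn inequality for the $\bH^1$-conforming relative $\bv_c$ itself: on a surface, the kernel of ${\rm Def}_\gamma$ consists of Killing fields, so a naive Korn inequality fails and one must either invoke a Korn inequality with the $L_2$ term present (which is exactly the form stated, so this is fine) or argue on the flat mesh where ${\rm Def}_{\Gamma_h}$ has a genuinely finite-dimensional kernel (rigid in-plane motions per component) handled by a Peetre–Tartar/compactness argument that must be made $h$-uniform. I expect the cleanest route is to prove the flat inequality directly on $\Gamma_h$ for the conforming relative using a scaling argument to the reference element plus the global $\bH^1$-conformity and tangentiality, keeping the $L_2$ term throughout so no quotient space is needed; the geometric perturbation terms and the nonconformity defect are then both genuinely lower order and absorbed. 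The one bookkeeping subtlety is ensuring all the $\lesssim$ constants are $h$-independent, which follows from shape-regularity and quasi-uniformity as assumed.
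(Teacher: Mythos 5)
Your proposal contains the paper's key ingredient — the conforming relative $\pt\bv_c\in\bH_T^1(\gamma)$ from Lemma \ref{lem:H1_approx}, a Korn inequality with the $L_2$ term, and a triangle-inequality/perturbation argument — but the route you actually commit to (reduce everything to $\Gamma_h$ and prove a Korn inequality there) has a genuine gap. The paper never leaves $\gamma$: it applies the continuous tangential Korn inequality on the \emph{fixed smooth} surface to $\pt\bv_c$, then swaps $\pt\bv_c$ for $\pt\bv$ using \eqref{H1_approx}, which gives $\|\pt\bv-\pt\bv_c\|_{H^1_h(\gamma)}\lesssim h\|\pt\bv\|_{L_2(\gamma)}$; the perturbation is already of the form $h\|\pt\bv\|_{L_2}$, so no absorption is needed and the constant is manifestly $h$-independent because the Korn constant is that of $\gamma$. (The flat-surface estimate \eqref{eqn:KornOnGammah} is then derived \emph{afterwards} as a corollary, via \eqref{eqn:DefRelation} and an inverse estimate — the opposite order from yours.)

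The gap in your flat-surface plan is twofold. First, there is no tangential $\bH^1$-conforming relative on $\Gamma_h$: the continuous piecewise-linear field $\bw$ used in the proof of Lemma \ref{lem:H1_approx} is globally continuous but \emph{not} tangential to $\Gamma_h$ (a continuous tangential field must essentially vanish at vertices — the locking phenomenon described in the introduction), and the Piola pullback of $\pt\bv_c$ to $\Gamma_h$ is tangential but no longer $\bH^1$-conforming, since the Piola map destroys in-plane tangential continuity. So the object you want to feed into a "piecewise-flat Korn inequality on $\Gamma_h$" does not exist. Second, even granting a suitable conforming field, a Korn inequality on the $h$-dependent Lipschitz surface $\Gamma_h$ with $h$-uniform constant is not available off the shelf and cannot be obtained "by a scaling argument to the reference element": Korn is a genuinely global inequality, and on a single element a rigid rotation has vanishing deformation, gradient of size $|K|^{1/2}$, and $L_2$ norm of size $h|K|^{1/2}$, so the local inequality degenerates as $h\to0$; a Peetre--Tartar compactness argument on a family of surfaces varying with $h$ requires a uniformity argument whose natural implementation is precisely to map back to $\gamma$ — i.e., the paper's route. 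Also a small bookkeeping point: the geometric perturbation in \eqref{eqn:DefDef} is of size $h(|\nab_{\Gamma_h}\bv|+|\bv|)$, not $O(h)\|\bv\|_{L_2}$; the gradient part must be absorbed into the left-hand side, which your reduction would require but the paper's direct argument on $\gamma$ avoids entirely. If you simply take the branch you mention in passing — apply the continuous Korn inequality on $\gamma$ to $\pt\bv_c$ — the detour through $\Gamma_h$ becomes unnecessary and your argument collapses to the paper's proof.
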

\begin{proof}
Given $\bv \in \bV_h$, let $\pt\bv_c \in \bH_T^1(\gamma)$ satisfy \eqref{H1_approx}.  
A continuous Korn inequality holds for $\pt \bv_c$, so using \eqref{H1_approx} we have
\begin{equation}\label{eqn:Triangle}
\begin{split}
\|\pt \bv \|_{H_h^1(\gamma)} & \lesssim \|\pt \bv_c \|_{H^1(\gamma)} + \|\pt\bv_c - \pt \bv\|_{H_h^1(\gamma)} 
\\ & \lesssim \|\pt \bv_c \|_{L_2(\gamma)} + \|{\rm Def}_\gamma \pt \bv_c \|_{L_2(\gamma)} + \|\pt \bv_c - \pt \bv\|_{H_h^1(\gamma)} 
\\ & \lesssim \|\pt\bv \|_{L_2(\gamma)} + \|{\rm Def}_{\gamma, h} \pt\bv \|_{L_2(\gamma)} + \|\pt \bv_c - \pt\bv\|_{H_h^1(\gamma)} 
\\ & \lesssim (1+h) \|\pt\bv \|_{L_2(\gamma)} + \|{\rm Def}_{\gamma, h}\pt \bv \|_{L_2(\gamma)}.
\end{split}
\end{equation}
\end{proof}

From this result, we obtain a discrete Korn inequality
for $\bV_h$ on $\Gamma_h$.
\begin{lemma}
There holds
\begin{align}\label{eqn:KornOnGammah}
\|\bv\|_{H^1_h(\Gamma_h)}\lesssim \|\bv\|_{L_2(\Gamma_h)}+ \|{\rm Def}_{\Gamma_h} \bv\|_{L_2(\Gamma_h)}\qquad \forall \bv\in \bV_h.
\end{align}
\end{lemma}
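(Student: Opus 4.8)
The plan is to transfer the discrete Korn inequality \eqref{discrete_korn} on the exact surface $\gamma$ to the discrete surface $\Gamma_h$ by means of the Piola transform $\calP_{\bp}$ and the norm equivalences already established. Given $\bv\in\bV_h$, set $\pt\bv = \calP_{\bp}\bv$. The key observation is that all four quantities appearing in \eqref{eqn:KornOnGammah} and \eqref{discrete_korn} are related elementwise: by Lemma \ref{lem:PiolaNormEquiv} (with $m=0,1$) we have $\|\bv\|_{H^m_h(\Gamma_h)}\approx\|\pt\bv\|_{H^m_h(\gamma)}$, and by Lemma \ref{lem:DefRelation} the deformation tensors of $\bv$ and $\pt\bv$ are comparable up to lower-order terms scaled by $h$.

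The steps, in order, are as follows. First, apply \eqref{eqn:PiolaNormEquiv} with $m=1$ to get $\|\bv\|_{H^1_h(\Gamma_h)}\lesssim\|\pt\bv\|_{H^1_h(\gamma)}$. Second, invoke Lemma \ref{eqn:Korn1} to bound $\|\pt\bv\|_{H^1_h(\gamma)}\lesssim\|\pt\bv\|_{L_2(\gamma)}+\|{\rm Def}_{\gamma,h}\pt\bv\|_{L_2(\gamma)}$. Third, use \eqref{eqn:PiolaNormEquiv} with $m=0$ to replace $\|\pt\bv\|_{L_2(\gamma)}$ by $\|\bv\|_{L_2(\Gamma_h)}$. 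Fourth, and this is the one genuinely substantive step, control $\|{\rm Def}_{\gamma,h}\pt\bv\|_{L_2(\gamma)}$: summing \eqref{eqn:DefRelation} over $K\in\calT_h$ gives
\begin{align*}
\|{\rm Def}_{\gamma,h}\pt\bv\|_{L_2(\gamma)}\lesssim \|{\rm Def}_{\Gamma_h}\bv\|_{L_2(\Gamma_h)}+h\big(\|\nab_{\Gamma_h}\bv\|_{L_2(\Gamma_h)}+\|\bv\|_{L_2(\Gamma_h)}\big)\lesssim \|{\rm Def}_{\Gamma_h}\bv\|_{L_2(\Gamma_h)}+h\|\bv\|_{H^1_h(\Gamma_h)}.
\end{align*}
Collecting these four inequalities yields
\begin{align*}
\|\bv\|_{H^1_h(\Gamma_h)}\lesssim \|\bv\|_{L_2(\Gamma_h)}+\|{\rm Def}_{\Gamma_h}\bv\|_{L_2(\Gamma_h)}+h\|\bv\|_{H^1_h(\Gamma_h)}.
\end{align*}

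The main (mild) obstacle is the final absorption: the term $h\|\bv\|_{H^1_h(\Gamma_h)}$ on the right appears also on the left, so for $h$ sufficiently small this term is absorbed into the left-hand side, leaving the claimed estimate $\|\bv\|_{H^1_h(\Gamma_h)}\lesssim\|\bv\|_{L_2(\Gamma_h)}+\|{\rm Def}_{\Gamma_h}\bv\|_{L_2(\Gamma_h)}$. This absorption is legitimate because the hidden constants are independent of $h$; one should note explicitly that the smallness assumption on $h$ has already been invoked throughout the paper, so no new hypothesis is introduced. Everything else is a routine chaining of already-proven norm equivalences, and no new estimate needs to be developed.
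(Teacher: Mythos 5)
Your proposal is correct and follows essentially the same chain as the paper: Piola norm equivalence (Lemma \ref{lem:PiolaNormEquiv}), the discrete Korn inequality on $\gamma$ (Lemma \ref{eqn:Korn1}), and the deformation-tensor comparison \eqref{eqn:DefRelation}. The only difference is in disposing of the leftover term $h\|\bv\|_{H^1_h(\Gamma_h)}$: the paper uses an inverse estimate ($h\|\bv\|_{H^1_h(\Gamma_h)}\lesssim \|\bv\|_{L_2(\Gamma_h)}$, valid since $\bv\in\bV_h$ is piecewise polynomial), whereas you absorb it into the left-hand side for $h$ sufficiently small; both are legitimate here.
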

\begin{proof}
We apply Lemma \ref{eqn:Korn1}, \eqref{eqn:DefRelation}, and an inverse estimate:
\begin{align*}
\|\bv\|_{H^1_h(\Gamma_h)}
&\lesssim \|\pt \bv\|_{H^1_h(\gamma)} \\ 
&\lesssim \| \pt \bv\|_{L_2(\gamma)}+\|{\rm Def}_{\gamma,h} \pt \bv\|_{L_2(\gamma)}\\
&\lesssim \| \bv\|_{L_2(\Gamma_h)}+\|{{\rm Def}_{\Gamma_h}}  \bv\|_{L_2(\Gamma_h)}+ h \|\bv\|_{H^1_h(\Gamma_h)}\\
&\lesssim \| \bv\|_{L_2(\Gamma_h)}+\|{{\rm Def}_{\Gamma_h}}  \bv\|_{L_2(\Gamma_h)}.
\end{align*}\hfill
\end{proof}

\section{Finite element method and convergence analysis}\label{sec-FEM}

For piecewise smooth $\bw,\bv$ with $\bv\in \bH({\rm div}_\gamma;\gamma)$
and $q\in L_2(\gamma)$,
we define the bilinear forms
\begin{align*}
a_\gamma(\bw,\bv) 
&= \int_\gamma {\rm Def}_{\gamma,h} \bw: {\rm Def}_{\gamma,h} \bv +\int_\gamma \bw\cdot \bv,\\
b_\gamma(\bv,q) 
& = -\int_\gamma ({\rm div}_\gamma \bv)q.
\end{align*}
The variational formulation for the Stokes problem \eqref{eqn:Stokes}
seeks $(\bu,p) \in \bH_T^1(\gamma)\times \mathring{L}_2(\gamma)$ satisfying
\begin{equation}
\label{eqn:StokesVar}
\begin{aligned}
a_\gamma(\bu,\bv) + b_\gamma(\bv,p) & = \int_\gamma {\bm f}\cdot \bv\qquad &&\forall \bv\in \bH_T^1(\gamma),\\
b_\gamma(\bu,q) & = 0\qquad &&\forall q\in \mathring{L}_2(\gamma).
\end{aligned}
\end{equation}
\begin{remark}\label{rem-Killing}
In order to ensure the well-posedness of \eqref{eqn:StokesVar} and avoid technical complications associated with Killing fields, 
we include the zeroth-order mass term in the momentum equations, as mentioned earlier in the introduction. 
A method for incorporating Killing fields 
into surface finite element methods for the  Stokes problem is presented in \cite{SurfaceStokes1},
and the main ideas presented there are applicable to the proposed discretization below.
\end{remark}

We define the analogous bilinear forms 
with respect to the discrete surface $\Gamma_h$:
\begin{align*}
a_{\Gamma_h}(\bw,\bv) 
&= \int_{\Gamma_h} {\rm Def}_{\Gamma_h} \bw: {\rm Def}_{\Gamma_h} \bv +\int_{{\Gamma_h}} \bw\cdot \bv,\\
b_{\Gamma_h}(\bv,q) 
& = -\int_{\Gamma_h} ({\rm div}_{\Gamma_h} \bv)q,
\end{align*}
where the differential operator ${\rm Def}_{\Gamma_h}$ is understood to act piecewise
with respect to $\calT_h$.  Then 
the finite element method seeks $(\bu_h,p_h)\in \bV_h\times Q_h$ such that
\begin{equation}\label{eqn:FEM}
\begin{aligned}
a_{\Gamma_h}(\bu_h,\bv) + b_{\Gamma_h}(\bv,p_h) & = \int_{{\Gamma_h}} {\bm f}_h\cdot \bv\qquad &&\forall \bv\in \bV_h,\\
b_{\Gamma_h}(\bu_h,q) & = 0\qquad &&\forall q\in Q_h,
\end{aligned}
\end{equation}
where ${\bm f}_h$ is some approximation of ${\bm f}$ that is defined on $\Gamma_h$.

By the inf-sup condition \eqref{eqn:dInfSup}, the discrete Korn-like inequality \eqref{eqn:KornOnGammah},
and standard theory of saddle-point problems,
there exists a unique solution \eqref{eqn:FEM}.
To derive error estimates, we restrict
 \eqref{eqn:FEM} to the discretely divergence--free subspace
$\bX_h := \{\bv\in \bV_h:\ \int_{\Gamma_h} ({\rm div}_{\Gamma_h} \bv)q = 0\ \forall q\in Q_h\}$.
Then $\bu_h\in \bX_h$ is uniquely determined by the problem
\begin{equation}\label{eqn:PoissonFEM}
a_{\Gamma_h}(\bu_h,\bv) = 
 \int_{\Gamma_h} {\bm f}_h \cdot \bv\quad \forall \bv\in \bX_h.
 \end{equation}
 Now set $\pt{\bu}_h = \calP_{\bp} \bu_h$,
$\pt{\bv} = \calP_{\bp} \bv$, and note that
\begin{align*}
\int_{\Gamma_h} {\bm f}_h\cdot \bv = \int_{\Gamma_h} {\bm f}_h \cdot \calP_{\bp^{-1}} \pt{\bv} = \int_{\gamma} {\bm F}_h \cdot \pt{\bv},
\end{align*}
where ${\bm F}_h =({\bf M}^\intercal {\bm f}_h)^\ell $, and 
  \begin{equation}\label{eqn:MDef}
  {\bf M} = \Big[{\bf I}-\frac{\bnu\otimes \bnu_h}{\bnu\cdot \bnu_h}\Big][{\bf I}-d {\bf H}]^{-1}
  \end{equation}
  is the matrix arising in the definition of $\calP_{\bp^{-1}}$.
  Therefore \eqref{eqn:PoissonFEM} is equivalent to the statement
\begin{equation}\label{eqn:FEMXhPt}
a_\gamma(\pt{\bu}_h,\pt{\bv}) = 
 \int_{\gamma} {{\bm F}_h\cdot \pt{\bv}} +G_h(\bu_h,\bv)\quad \forall {\bv}\in {\bX}_h,
 \end{equation}
 where the bilinear form $G_h: {\bH^1_h(\Gamma_h)\times \bH^1_h(\Gamma_h)}
 \to \mathbb{R}$ 
 given by
 \[
 G_h(\bw,\bv) =a_\gamma(\pt{\bw},\pt{\bv})-a_{\Gamma_h}(\bw,\bv)
 \]
  encodes geometric error.
  \begin{lemma}\label{lem:GhBound}
There holds
\begin{equation}\label{eqn:GhBound}
|G_h(\bw,\bv)|\lesssim h \|\pt \bw\|_{H^1_h(\gamma)} \|\pt{\bv}\|_{H^1_h(\gamma)}
\end{equation}
for all tangential $\bw,\bv\in \bH^1_h(\Gamma_h)$.
\end{lemma}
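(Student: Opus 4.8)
The bilinear form $G_h(\bw,\bv) = a_\gamma(\pt{\bw},\pt{\bv}) - a_{\Gamma_h}(\bw,\bv)$ consists of two pieces: the deformation-tensor term $\int_\gamma {\rm Def}_{\gamma,h}\pt{\bw}:{\rm Def}_{\gamma,h}\pt{\bv} - \int_{\Gamma_h}{\rm Def}_{\Gamma_h}\bw:{\rm Def}_{\Gamma_h}\bv$ and the zeroth-order term $\int_\gamma \pt{\bw}\cdot\pt{\bv} - \int_{\Gamma_h}\bw\cdot\bv$. I would handle these separately, element by element over $K\in\calT_h$ and $K^\gamma = \bp(K)$, and then sum. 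The main tool is the change of variables $\int_{K^\gamma}(\cdot) = \int_K (\cdot)^e\,\mu_h$ together with the estimate $|1-\mu_h|\lesssim h^2$ from \eqref{eqn:muBound}, the Piola norm equivalence Lemma \ref{lem:PiolaNormEquiv}, and crucially the deformation-tensor comparison Lemma \ref{lem:DefRelation}, which gives the pointwise bound $|{\rm Def}_\gamma\pt\bv - ({\rm Def}_{\Gamma_h}\bv)\circ\bp^{-1}|\lesssim h(|(\nab_{\Gamma_h}\bv)\circ\bp^{-1}| + |\bv\circ\bp^{-1}|)$.

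For the zeroth-order term, the plan is: write $\int_\gamma \pt\bw\cdot\pt\bv - \int_{\Gamma_h}\bw\cdot\bv = \int_{\Gamma_h}(\mu_h\, (\pt\bw\cdot\pt\bv)^e - \bw\cdot\bv)$, then split into $\int_{\Gamma_h}(\mu_h-1)(\pt\bw\cdot\pt\bv)^e$ plus $\int_{\Gamma_h}\big((\pt\bw\cdot\pt\bv)^e - \bw\cdot\bv\big)$. The first integral is controlled by $h^2\|\pt\bw\|_{L_2(\gamma)}\|\pt\bv\|_{L_2(\gamma)}$ via Cauchy–Schwarz, \eqref{eqn:muBound}, and norm equivalence. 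For the second, I would use \eqref{eqn:Piola}, which gives $\pt\bw\circ\bp = \mu_h^{-1}[\bPi - d\bH]\bw$; since $|\mu_h^{-1}[\bPi-d\bH] - \bPi_h|\lesssim h^2$ (using $|\mu_h-1|\lesssim h^2$, $|d|\lesssim h^2$, and $\bPi_h\bw=\bw$ since $\bw$ is tangential to $\Gamma_h$), we get $|(\pt\bw)^e - \bw|\lesssim h^2|\bw|$ pointwise, and the difference of products is then $\lesssim h^2(|\bw|+|\pt\bw\circ\bp|)(|\bv|+|\pt\bv\circ\bp|)$ pointwise; integrating and using norm equivalence yields an $h^2$ bound, hence certainly $\lesssim h$.

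For the deformation term, write it as $\int_{\Gamma_h}\big(\mu_h\,({\rm Def}_{\gamma,h}\pt\bw:{\rm Def}_{\gamma,h}\pt\bv)^e - {\rm Def}_{\Gamma_h}\bw:{\rm Def}_{\Gamma_h}\bv\big)$. Again split off the $\mu_h-1$ factor (contributing $h^2$ after Cauchy–Schwarz and norm equivalence). For the remaining $\int_{\Gamma_h}\big(({\rm Def}_{\gamma,h}\pt\bw)^e:({\rm Def}_{\gamma,h}\pt\bv)^e - {\rm Def}_{\Gamma_h}\bw:{\rm Def}_{\Gamma_h}\bv\big)$, insert and subtract to write it as $({\rm Def}_{\gamma,h}\pt\bw)^e : \big(({\rm Def}_{\gamma,h}\pt\bv)^e - {\rm Def}_{\Gamma_h}\bv\big) + \big(({\rm Def}_{\gamma,h}\pt\bw)^e - {\rm Def}_{\Gamma_h}\bw\big):{\rm Def}_{\Gamma_h}\bv$; each cross term is bounded pointwise using Lemma \ref{lem:DefRelation} by $h\,|{\rm stuff}|\cdot(|\nab_{\Gamma_h}\cdot| + |\cdot|)$, and after Cauchy–Schwarz, \eqref{eqn:DefRelation}, and Lemma \ref{lem:PiolaNormEquiv} this yields the claimed $h\|\pt\bw\|_{H^1_h(\gamma)}\|\pt\bv\|_{H^1_h(\gamma)}$ (here one also uses $\|{\rm Def}_{\Gamma_h}\bv\|_{L_2(\Gamma_h)}\lesssim\|\nab_{\Gamma_h}\bv\|_{L_2(\Gamma_h)}\lesssim\|\pt\bv\|_{H^1_h(\gamma)}$ by norm equivalence). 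The main obstacle is bookkeeping: keeping careful track that every geometric perturbation factor ($\mu_h-1$, $d$, $\bnu-\bnu_h$, and the $\bH$-terms) contributes at least one power of $h$, so that the slowest term is the single cross-term estimate giving $O(h)$; there are no genuine analytic difficulties beyond assembling these pieces and summing over $\calT_h$.
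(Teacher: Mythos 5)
Your plan follows essentially the same route as the paper: split $G_h$ into the deformation part and the zeroth-order part, bound the deformation part by the cross-term decomposition together with \eqref{eqn:DefDef}, \eqref{eqn:muBound}, and Lemma \ref{lem:PiolaNormEquiv} (the paper carries this out on $\gamma$ with a $\mu_h^{-1}$ factor rather than pulling back to $\Gamma_h$, which is only a cosmetic difference), and bound the mass part by a purely geometric $O(h^2)$ perturbation estimate. One intermediate claim in your treatment of the mass term is, however, not correct as stated: from \eqref{eqn:Piola} one gets $(\pt\bw)\circ\bp-\bw=(\mu_h^{-1}-1)\bPi\bw-\mu_h^{-1}d\bH\bw+(\bPi-\bPi_h)\bw$, and since $\bw$ is tangential to $\Gamma_h$ but not to $\gamma$, the last term equals $-\bnu(\bnu\cdot\bw)$ with $|\bnu\cdot\bw|=|(\bnu-\bnu_h)\cdot\bw|\lesssim h|\bw|$; so the pointwise difference is only $O(h|\bw|)$, not $O(h^2|\bw|)$. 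This does not sink the proof: the resulting bound on the product difference is still $O(h)$, which is all the lemma needs (and the $O(h^2)$ rate for the mass term can be recovered by observing that the leading error is purely normal to $\gamma$ and pairs with the $O(h)$ normal component of the other factor, or, as the paper does, by writing the mass discrepancy as $\int_\gamma \pt\bw^{\intercal}\bigl[\bPi-(\mu_h\circ\bp^{-1}){\bf M}^{\intercal}{\bf M}\bigr]\pt\bv$ with ${\bf M}$ from \eqref{eqn:MDef} and bounding the bracketed matrix by $h^2$ directly). With that correction, your argument assembles into a valid proof of \eqref{eqn:GhBound}.
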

\begin{proof}
We write
\begin{align*} 
&\int_{\gamma} {\rm Def}_{\gamma,h} \pt{\bw}:{\rm Def}_{\gamma,h} \pt{\bv} - \int_{\Gamma_h} {\rm Def}_{\Gamma_h} \bw:{\rm Def}_{\Gamma_h} \bv\\
&\qquad = \int_{\gamma} {\rm Def}_{\gamma,h} \pt{\bw}:{\rm Def}_{\gamma,h} \pt{\bv} - \int_{\gamma} (\mu_h^{-1} {\rm Def}_{\Gamma_h} \bw)\circ \bp^{-1} :({\rm Def}_{\Gamma_h} \bv)\circ \bp^{-1}\\
&\qquad = \int_{\gamma} \left({\rm Def}_{\gamma,h} \pt{\bw}-(\mu_h^{-1} {\rm Def}_{\Gamma_h} \bw)\circ \bp^{-1}\right):{\rm Def}_{\gamma,h} \pt{\bv}\\
&\qquad\qquad - \int_{\gamma} (\mu_h^{-1} {\rm Def}_{\Gamma_h} \bw)\circ \bp^{-1} :\left(({\rm Def}_{\Gamma_h} \bv)\circ \bp^{-1}-{\rm Def}_{\gamma,h} \pt{\bv}\right).
\end{align*}
Applying \eqref{eqn:DefDef}, \eqref{eqn:muBound}, and Lemma \ref{lem:PiolaNormEquiv}, we obtain
\begin{align}\label{eqn:GBound1}
\left|\int_{\gamma} {\rm Def}_{\gamma,h} \pt{\bw}:{\rm Def}_{\gamma,h} \pt{\bv} - \int_{\Gamma_h} {\rm Def}_{\Gamma_h} \bw:{\rm Def}_{\Gamma_h} \bv\right|
&\lesssim h \|\pt{\bw}\|_{H^1_h(\gamma)} \|\pt{\bv}\|_{H^1_h(\gamma)}.
\end{align}

Next, we use the formula of the Piola transform
involving ${\bf M}$ to obtain
\begin{align*}
\int_\gamma \pt{\bw}\cdot \pt{\bv} - \int_{\Gamma_h} \bw\cdot \bv
& = \int_\gamma \pt{\bw}\cdot \pt{\bv}  -\int_\gamma (\mu_h\circ \bp^{-1})  \pt{\bw}^{\intercal} {\bf M}^\intercal {\bf M} \pt{\bv}\\
%
%
& = \int_\gamma \pt{\bw}^{\intercal}[\bPi-(\mu_h \circ \bp^{-1}) {\bf M}^{\intercal} {\bf M}] \pt{\bv}. 
\end{align*}
A short  
computation using \eqref{eqn:muBound} yields $|\bPi-(\mu_h \circ \bp^{-1}) {\bf M}^{\intercal} {\bf M}| \lesssim |(\bnu-\frac{\bnu_h}{\bnu \cdot \bnu_h}) \otimes (\bnu-\frac{\bnu_h}{\bnu \cdot \bnu_h})| + h^2 \lesssim h^2$.  Thus 
%
\begin{equation}\label{eqn:GBound2}
\left|\int_\gamma \pt{\bw}\cdot \pt{\bv} - \int_{\Gamma_h} \bw\cdot \bv\right|\lesssim h^2 \|\pt{\bw}\|_{L_2(\gamma)}\|\pt{\bv}\|_{L_2(\gamma)}.
\end{equation}
The result  \eqref{eqn:GhBound} follows from \eqref{eqn:GBound1}--\eqref{eqn:GBound2}.
\end{proof}

The next lemma states the approximation properties
of the discretely divergence--free subspace $\bX_h$.
The result essentially follows from the inf-sup condition \eqref{eqn:dInfSup}
and the arguments
in \cite[Theorem 12.5.17]{BrennerScott}. For completeness
we provide the proof.
\begin{lemma}\label{lem:XhApprox}
Let $\bu\in \bH^1(\gamma)$ satisfy $\Div_{\gamma} \bu=0$.
Then there holds
\[
\inf_{\bv\in \bX_h} \|\bu-\pt{\bv}\|_{H^1_h(\gamma)}\lesssim \inf_{\bv\in \bV_h} \|\bu-\pt{\bv}\|_{H^1_h(\gamma)}.
\]
\end{lemma}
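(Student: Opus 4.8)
The plan is to adapt the classical Fortin-type argument for approximation in discretely divergence-free subspaces, following \cite[Theorem 12.5.17]{BrennerScott}, but accounting for the geometric perturbation introduced by the Piola transform $\calP_{\bp}$ and the fact that $\bV_h$ is only $\bH({\rm div})$-conforming rather than $\bH^1$-conforming. First I would fix an arbitrary $\bv \in \bV_h$ and set $\pt{\bv} = \calP_{\bp}\bv$; the goal is to produce $\bz \in \bX_h$ with $\|\bu - \pt{\bz}\|_{H^1_h(\gamma)} \lesssim \|\bu - \pt{\bv}\|_{H^1_h(\gamma)}$. Since $\bu$ is exactly divergence-free on $\gamma$ while $\bv$ need not be discretely divergence-free on $\Gamma_h$, the natural correction is to find $\bw \in \bV_h$ such that $b_{\Gamma_h}(\bw, q) = b_{\Gamma_h}(\bv, q)$ for all $q \in Q_h$ (equivalently, $\bv - \bw \in \bX_h$), with the stability bound $\|\bw\|_{H^1_h(\Gamma_h)} \lesssim \sup_{q \in Q_h} b_{\Gamma_h}(\bv,q)/\|q\|_{L_2(\Gamma_h)}$. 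Such a $\bw$ exists by the discrete inf-sup condition \eqref{eqn:dInfSup} applied to the linear functional $q \mapsto b_{\Gamma_h}(\bv,q)$ on $Q_h$. Then $\bz := \bv - \bw \in \bX_h$ is the candidate.

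The key step is to bound $\sup_{q}b_{\Gamma_h}(\bv,q)/\|q\|_{L_2(\Gamma_h)}$ by the best-approximation error. Here I would use that $\bu$ is divergence-free on $\gamma$, translate $b_{\Gamma_h}(\bv,q)$ to an integral over $\gamma$ using \eqref{eqn:PiolaInv1} (noting $\bv \in \bH({\rm div}_{\Gamma_h};\Gamma_h)$ by Proposition \ref{prop:Hdiv}), and write $b_{\Gamma_h}(\bv, q) = -\int_{\Gamma_h}({\rm div}_{\Gamma_h}\bv)q = -\int_\gamma ({\rm div}_\gamma \pt{\bv})q^\ell = -\int_\gamma ({\rm div}_\gamma(\pt{\bv} - \bu))q^\ell$, since ${\rm div}_\gamma \bu = 0$. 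Integrating by parts on each $K^\gamma$ and using that the in-plane normal jumps of $\pt{\bv}$ vanish — because $\calP_{\bp}$ preserves in-plane normal continuity and $\bv \in \bH({\rm div}_{\Gamma_h};\Gamma_h)$, hence $\pt{\bv} \in \bH({\rm div}_\gamma;\gamma)$ — would show this equals $\int_\gamma (\pt{\bv} - \bu)\cdot \nab_\gamma q^\ell$ plus lower-order geometric terms, which is bounded by $\|\bu - \pt{\bv}\|_{L_2(\gamma)}\|q\|_{H^1(\gamma)} \lesssim h^{-1}\|\bu - \pt{\bv}\|_{L_2(\gamma)}\|q\|_{L_2(\Gamma_h)}$ via an inverse estimate on $Q_h$ and \eqref{eqn:extEquivalence}, \eqref{eqn:muBound}. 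Alternatively, and more cleanly, one can bound ${\rm div}_\gamma(\pt{\bv} - \bu)$ directly in $L_2(\gamma)$ and pair against $q^\ell$; either way one gets $\|\bw\|_{H^1_h(\Gamma_h)} \lesssim \|{\rm div}_\gamma(\bu - \pt{\bv})\|_{L_2(\gamma)} \lesssim \|\bu - \pt{\bv}\|_{H^1_h(\gamma)}$.

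Finally I would assemble the estimate via the triangle inequality: $\|\bu - \pt{\bz}\|_{H^1_h(\gamma)} \le \|\bu - \pt{\bv}\|_{H^1_h(\gamma)} + \|\pt{\bw}\|_{H^1_h(\gamma)}$, and use the norm equivalence $\|\pt{\bw}\|_{H^1_h(\gamma)} \lesssim \|\bw\|_{H^1_h(\Gamma_h)}$ (from Lemma \ref{lem:PiolaNormEquiv}) together with the bound on $\|\bw\|_{H^1_h(\Gamma_h)}$ just established, to conclude $\|\bu - \pt{\bz}\|_{H^1_h(\gamma)} \lesssim \|\bu - \pt{\bv}\|_{H^1_h(\gamma)}$. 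Taking the infimum over $\bv \in \bV_h$ finishes the proof. The main obstacle I anticipate is handling the geometric discrepancy carefully in the divergence estimate: $b_{\Gamma_h}(\bv, q)$ is an integral over $\Gamma_h$ with ${\rm div}_{\Gamma_h}$, whereas $\bu$ lives on $\gamma$ with ${\rm div}_\gamma$, so one must pass through \eqref{eqn:PiolaInv1} and control the $\mu_h$-weight and the difference between ${\rm div}_\gamma \pt{\bv}$ and $({\rm div}_{\Gamma_h}\bv)\circ\bp^{-1}$; the identity \eqref{eqn:PiolaInv1} makes this exact up to the $\mu_h^{-1}$ factor, so the perturbation is only $O(h^2)$ and absorbable, but it requires care to state precisely without circularity.
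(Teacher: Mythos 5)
Your proposal is correct and takes essentially the same approach as the paper: use the discrete inf-sup condition \eqref{eqn:dInfSup} to produce a correction $\bw\in\bV_h$ with a stability bound, observe that $b_{\Gamma_h}(\bv,q)=b_\gamma(\pt{\bv},q^\ell)$ holds exactly by \eqref{eqn:PiolaInv1} (no geometric terms are needed) so that ${\rm div}_\gamma\bu=0$ puts the corrected function in $\bX_h$, and finish with the triangle inequality and the norm equivalence of Lemma \ref{lem:PiolaNormEquiv} — the paper's correction $\bv+\bw$ with $b_{\Gamma_h}(\bw,q)=b_\gamma(\bu-\pt{\bv},q^\ell)$ coincides with yours up to sign. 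One caveat: your ``either way'' is too generous, since the integration-by-parts/inverse-estimate variant only yields $h^{-1}\|\bu-\pt{\bv}\|_{L_2(\gamma)}\,\|q\|_{L_2(\Gamma_h)}$, which is not controlled by $\|\bu-\pt{\bv}\|_{H^1_h(\gamma)}\|q\|_{L_2(\Gamma_h)}$ for arbitrary $\bv\in\bV_h$; only your ``cleaner'' route of pairing ${\rm div}_\gamma(\pt{\bv}-\bu)$ directly against $q^\ell$ in $L_2$ closes the argument, and that is exactly what the paper does.
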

\begin{proof}
Fix $\bv\in \bV_h$.  The inf-sup condition \eqref{eqn:dInfSup} implies  there exists $\bw\in \bV_h$ such that
$b_{\Gamma_h}(\bw,q) = b_\gamma(\bu-\pt{\bv},q^\ell)$ for all $q\in Q_h$, and $\|\bw\|_{H^1_h(\Gamma_h)}\lesssim \|\bu-\pt{\bv}\|_{H^1_h(\gamma)}$.
Then $\bw+\bv\in \bX_h$ and $\|\bu-(\pt{\bw}+\pt{\bv})\|_{H^1_h(\gamma)}\le \|\bu-\pt{\bv}\|_{H^1_h(\gamma)}+\|\pt{\bw}\|_{H^1_h(\gamma)}\lesssim \|\bu-\pt{\bv}\|_{H^1_h(\gamma)}$.
This implies the desired result.
\end{proof}

\begin{theorem}
Let $(\bu_h,p_h)\in \bV_h\times Q_h$
satisfy the finite element method \eqref{eqn:FEM}.
Let $\pt{\bu}_h = \calP_{\bp} \bu_h$ denote the Piola
transform of $\bu_h$ with respect to the closest point projection
$\bp$, and let $p^\ell_h = p_h\circ \bp^{-1}$. Then there holds
\begin{subequations}
\begin{align}
\label{eqn:uhErrorAb}
\|\bu-\pt{\bu}_h\|_{H^1_h(\gamma)}
&\lesssim \inf_{(\bv,q)\in \bV_h\times Q_h} \big(\|\bu-\pt{\bv}\|_{H^1_h(\gamma)}+ \|p-q^\ell\|_{L_2(\gamma)}\big)+ h^2 \|{\bm f}\|_{L_2(\gamma)} +\|{\bm f}-{\bm F}_h\|_{L_2(\gamma)}\\
 &\nonumber \quad+h\big(\|p\|_{L_2(\gamma)}+ \|\bu\|_{H^1(\gamma)}+\|{\bm f}_h\|_{L_2(\Gamma_h)}\big),\\
 \label{eqn:phErrorAb}
 \|p-p_h^\ell\|_{L_2(\gamma)} 
 &\lesssim \inf_{q\in Q_h} \|p-q^\ell\|_{L_2(\gamma)}
+\|\bu-\pt{\bu}_h\|_{H^1_h(\gamma)}+ 
h^2 \|{\bm f}\|_{L_2(\gamma)} +\|{\bm f}-{\bm F}_h\|_{L_2(\gamma)}\\
 &\nonumber \quad+h\big(\|p\|_{L_2(\gamma)}+ \|\bu\|_{H^1(\gamma)}+\|{\bm f}_h\|_{L_2(\Gamma_h)}\big).
 \end{align}
 \end{subequations}
Therefore, by Lemma \ref{lem:Interp}, if $(\bu,p)\in \bH^2(\gamma)\times H^1(\gamma)$, there holds
\begin{align}
\label{convergence}
\|\bu-\pt{\bu}_h\|_{H^1_h(\gamma)}+\|p-p_h^\ell\|_{L_2(\gamma)}
&\lesssim h(\|\bu\|_{H^2(\gamma)}+\|p\|_{H^1(\gamma)}+\|{\bm f}_h\|_{L_2(\Gamma_h)}) +\|{\bm f}-{\bm F}_h\|_{L_2(\gamma)}.
\end{align}
\end{theorem}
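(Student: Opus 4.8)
The plan is to prove the final convergence theorem via a Strang-type argument carried out on the exact surface $\gamma$. The starting point is the reformulation \eqref{eqn:FEMXhPt}, which says $\pt{\bu}_h$ satisfies the exact-surface variational problem on $\bX_h$ up to two perturbations: the geometric consistency term $G_h(\bu_h,\bv)$ controlled by Lemma \ref{lem:GhBound}, and the force discrepancy $\int_\gamma ({\bm F}_h - {\bm f})\cdot \pt{\bv}$. First I would write the $\bH^1_h(\gamma)$-error $\bu - \pt{\bu}_h$ as (best approximation from $\bX_h$) $+$ (a discrete error $\pt{\bu}_h - \pt{\bv}$ for an arbitrary $\bv\in\bX_h$). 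The discrete error is estimated by testing the difference of \eqref{eqn:StokesVar} restricted to $\pt{\bX}_h$ (note $b_\gamma(\pt{\bv},q^\ell)=0$ since $\bv\in\bX_h$ and using \eqref{eqn:PiolaInv1}) against \eqref{eqn:FEMXhPt}, then using coercivity of $a_\gamma$ on tangential fields — which is exactly the continuous Korn inequality plus the mass term, valid on $\bX_h$ fields via the discrete Korn inequality Lemma \ref{eqn:Korn1}. This produces, after Cauchy-Schwarz, a bound in terms of $\|\bu-\pt{\bv}\|_{H^1_h(\gamma)}$, $\|p-q^\ell\|_{L_2(\gamma)}$ (entering through the consistency defect $b_\gamma(\pt{\bv},p-q^\ell)$ with $q\in Q_h$ arbitrary), the $G_h$ term, and the force term. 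Applying Lemma \ref{lem:GhBound} with $\|\pt{\bu}_h\|_{H^1_h(\gamma)}\lesssim \|{\bm f}_h\|_{L_2(\Gamma_h)}$ (from \eqref{eqn:PoissonFEM} and Korn) and $\|\pt{\bv}\|_{H^1_h(\gamma)}$ absorbed or bounded by data, together with Lemma \ref{lem:XhApprox} to replace $\inf_{\bX_h}$ by $\inf_{\bV_h}$, yields \eqref{eqn:uhErrorAb}.

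For the pressure estimate \eqref{eqn:phErrorAb} I would use the discrete inf-sup condition \eqref{eqn:dInfSup}: for any $q\in Q_h$, $\|p_h - q\|_{L_2(\Gamma_h)}\lesssim \sup_{\bv\in\bV_h} b_{\Gamma_h}(\bv, p_h-q)/\|\bv\|_{H^1_h(\Gamma_h)}$. Rewriting $b_{\Gamma_h}(\bv,p_h)$ via the momentum equation \eqref{eqn:FEM}, transferring to $\gamma$ through the Piola identities and picking up another $G_h$ term and force term, and comparing with the continuous momentum equation \eqref{eqn:StokesVar}, one bounds this by $\|\bu - \pt{\bu}_h\|_{H^1_h(\gamma)}$ (via $a_\gamma(\bu - \pt{\bu}_h, \pt{\bv})$), $\|p - q^\ell\|_{L_2(\gamma)}$, the geometric $h$-terms, and $\|{\bm f}-{\bm F}_h\|_{L_2(\gamma)}$. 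A triangle inequality $\|p - p_h^\ell\|_{L_2(\gamma)}\le \|p - q^\ell\|_{L_2(\gamma)} + \|q^\ell - p_h^\ell\|_{L_2(\gamma)}$, the norm-equivalence \eqref{eqn:extEquivalence}/\eqref{eqn:muBound}, and infimizing over $q\in Q_h$ gives \eqref{eqn:phErrorAb}. Finally, \eqref{convergence} follows by inserting the interpolation estimate of Lemma \ref{lem:Interp} (applied to $\bu\in\bH^2(\gamma)$, using $\|\bu - \pt{\bI_h\ipt{\bu}}\|_{H^1_h(\gamma)}\lesssim h\|\bu\|_{H^2(\gamma)}$ after $\pt{}$/$\ipt{}$ cancel) and the standard $L_2$-projection / Lagrange interpolation estimate $\inf_{q\in Q_h}\|p - q^\ell\|_{L_2(\gamma)}\lesssim h\|p\|_{H^1(\gamma)}$, then absorbing $\|p\|_{L_2} + \|\bu\|_{H^1}$ into the $H^2\times H^1$ norms, and substituting \eqref{eqn:uhErrorAb} into \eqref{eqn:phErrorAb}.

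\textbf{The main obstacle} I anticipate is the careful bookkeeping of the geometric-consistency terms — specifically ensuring that after all transfers between $\Gamma_h$ and $\gamma$ (for the bilinear forms, the divergence constraint, and the right-hand side) the error contributions genuinely collapse to the clean form $h(\|p\|_{L_2(\gamma)} + \|\bu\|_{H^1(\gamma)} + \|{\bm f}_h\|_{L_2(\Gamma_h)})$ plus $\|{\bm f}-{\bm F}_h\|_{L_2(\gamma)}$, with no stray uncontrolled norms of $\pt{\bu}_h$ or $p_h$ on the right. The delicate point is that $G_h(\bu_h, \cdot)$ involves $\|\pt{\bu}_h\|_{H^1_h(\gamma)}$, which must itself be bounded \emph{a priori} by data using \eqref{eqn:PoissonFEM} and the discrete Korn inequality \eqref{eqn:KornOnGammah}; likewise the pressure consistency term requires knowing $\|p_h\|_{L_2}$ is controlled, which comes from the discrete inf-sup stability of the full saddle-point system. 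A secondary subtlety is the $b_\gamma$ defect: one must exploit that $\bv\in\bX_h$ makes $b_{\Gamma_h}(\bv,q)=0$ for $q\in Q_h$ and then use \eqref{eqn:PiolaInv1} to relate $b_{\Gamma_h}(\bv, q)$ and $b_\gamma(\pt{\bv}, q^\ell)$ \emph{exactly} (not just approximately) — this is the place where the $\bH({\rm div})$-conformity from Proposition \ref{prop:Hdiv} is essential, since without it integration by parts would generate uncontrolled jump terms. Everything else is routine application of the lemmas already established.
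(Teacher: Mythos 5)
There is a genuine gap in your argument, and it sits at the heart of why this paper needs Lemma \ref{lem:H1_approx}. You propose to estimate the discrete error by ``testing the difference of \eqref{eqn:StokesVar} restricted to $\pt{\bX}_h$'' (and again, in the pressure step, by ``comparing with the continuous momentum equation''). But $\bV_h$, hence $\bX_h$, is only $\bH({\rm div}_{\Gamma_h};\Gamma_h)$-conforming; for $\bv\in\bX_h$ the transform $\pt{\bv}=\calP_{\bp}\bv$ is merely piecewise $H^1$ with nonzero tangential jumps across edges, so it is \emph{not} an admissible test function in \eqref{eqn:StokesVar}, whose test space is $\bH_T^1(\gamma)$. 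If you instead test the strong form elementwise and integrate by parts, the pressure-flux edge terms cancel by normal continuity (this is the role of Proposition \ref{prop:Hdiv} that you correctly identify for the $b$-form), but the deformation-flux terms $\int_e ({\rm Def}_\gamma\bu\,\bn)\cdot\jump{\pt{\bv}}$ do not, so there is a nonconformity consistency error in the $a$-form that your outline silently sets to zero. The paper's remedy is precisely the conforming relative: for $\bv\in\bX_h$ take $\pt{\bv}_c\in\bH_T^1(\gamma)$ from Lemma \ref{lem:H1_approx}, apply \eqref{eqn:StokesVar} only with $\pt{\bv}_c$, and split $a_\gamma(\bu,\pt{\bv})=a_\gamma(\bu,\pt{\bv}_c)+a_\gamma(\bu,\pt{\bv}-\pt{\bv}_c)$; the correction terms $\int_\gamma {\bm f}\cdot(\pt{\bv}_c-\pt{\bv})$, $b_\gamma(\pt{\bv}_c-\pt{\bv},p)$ and $a_\gamma(\bu,\pt{\bv}-\pt{\bv}_c)$, estimated via \eqref{H1_approx}, are exactly the source of the $h^2\|{\bm f}\|_{L_2(\gamma)}$, $h\|p\|_{L_2(\gamma)}$ and $h\|\bu\|_{H^1(\gamma)}$ contributions in \eqref{eqn:uhErrorAb}--\eqref{eqn:phErrorAb}. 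The fact that your bookkeeping cannot produce these terms is a symptom of the missing step; the same repair is needed in the pressure bound before invoking the inf-sup condition \eqref{eqn:dInfSup}.

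Apart from this, your skeleton coincides with the paper's: work from \eqref{eqn:FEMXhPt}, use $b_\gamma(\pt{\bv},q^\ell)=b_{\Gamma_h}(\bv,q)=0$ for $\bv\in\bX_h$ via \eqref{eqn:PiolaInv1} to bring in $\|p-q^\ell\|_{L_2(\gamma)}$, bound $G_h$ with Lemma \ref{lem:GhBound} together with the a priori estimate $\|\pt{\bu}_h\|_{H^1_h(\gamma)}\lesssim\|{\bm f}_h\|_{L_2(\Gamma_h)}$, pass from $\bX_h$ to $\bV_h$ with Lemma \ref{lem:XhApprox}, use \eqref{eqn:dInfSup} for the pressure, and conclude \eqref{convergence} with Lemma \ref{lem:Interp} and pressure interpolation. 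So the proposal would become the paper's proof once you insert the conforming-relative step (or an equivalent direct estimate of the tangential edge jumps of $\pt{\bv}$); as written, however, the key consistency estimate is missing and the argument does not close.
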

\begin{proof}
For $\bv\in \bX_h$, we denote by $\pt{\bv}_c\in \bH_T^1(\gamma)$ the conforming
relative of $\pt{\bv} = \calP_{\bp} \bv$ satisfying \eqref{H1_approx}.
Using \eqref{eqn:StokesVar}, \eqref{eqn:FEMXhPt} and \eqref{eqn:PiolaInv1}, we write 
\begin{align*}
a_\gamma(\bu-\pt{\bu}_h,\pt{\bv})
& = a_\gamma(\bu,\pt{\bv}_c) + a_\gamma(\bu,\pt{\bv}-\pt{\bv}_c) - \int_\gamma {\bm F}_h \cdot \pt{\bv} - G_h(\bu_h,\bv)\\
& = \int_\gamma {\bm f}\cdot \pt{\bv}_c  - \int_\gamma {\bm F}_h \cdot \pt{\bv}-b_\gamma(\pt{\bv}_c,p)+ a_\gamma(\bu,\pt{\bv}-\pt{\bv}_c) - G_h(\bu_h,\bv)\\
&  = \int_{\gamma} {\bm f}\cdot (\pt{\bv}_c-\pt{\bv}) -\int_\gamma ({\bm F}_h-{\bm f}) \cdot \pt{\bv}
  -b_\gamma(\pt{\bv},p-q^\ell)  -b_\gamma(\pt{\bv}_c-\pt{\bv},p)\\
  &\qquad+ a_\gamma(\bu,\pt{\bv}-\pt{\bv}_c) - G_h(\bu_h,\bv)\qquad \forall q\in Q_h.
\end{align*}
Applying continuity estimates of the bilinear forms,
\eqref{H1_approx}, and \eqref{eqn:GhBound} yield
\begin{align*}
&a_\gamma(\bu-\pt{\bu}_h,\pt{\bv})\\
&~~\lesssim  
\big(h^2 \|{\bm f}\|_{L_2(\gamma)} +\|{\bm f}-{\bm F}_h\|_{L_2(\gamma)} +\|p-q^\ell\|_{L_2(\gamma)}
 +h\|p\|_{L_2(\gamma)}+h \|\bu\|_{H^1(\gamma)}+ h\|\pt{\bu}_h\|_{H^1_h(\gamma)}\big)
\|\pt \bv\|_{H^1_h(\gamma)}.
\end{align*}
The estimate $\|\pt{\bu}_h\|_{H^1_h(\gamma)}\lesssim \|\bu_h\|_{H^1_h(\Gamma_h)}\lesssim \|{\bm f}_h\|_{L_2(\Gamma_h)}$, and standard arguments then yield
\begin{align*}
\|\bu-\pt{\bu}_h\|_{H^1_h(\gamma)}
&\lesssim \inf_{(\bv,q)\in \bX_h\times Q_h} \big(\|\bu-\pt{\bv}\|_{H^1_h(\gamma)}+ \|p-q^\ell\|_{L_2(\gamma)}\big)
+ h^2 \|{\bm f}\|_{L_2(\gamma)} +\|{\bm f}-{\bm F}_h\|_{L_2(\gamma)}\\
 &\qquad+h(\|p\|_{L_2(\gamma)}+ \|\bu\|_{H^1(\gamma)} +\|{\bm f}_h\|_{L_2(\Gamma_h)}).
\end{align*}
The estimate \eqref{eqn:uhErrorAb} then 
follows by applying Lemma \ref{lem:XhApprox}.

For the pressure error,
we similarly apply \eqref{eqn:PiolaInv1},  \eqref{eqn:FEMXhPt}, \eqref{H1_approx}, and \eqref{eqn:FEMXhPt}--\eqref{eqn:GhBound}
to obtain for all $\bv\in \bV_h$ and $q\in Q_h$,
\begin{align*}
b_{\Gamma_h}(\bv,p_h-q)
& = \int_{\Gamma_h} {\bm f}_h \cdot \bv - a_{\Gamma_h}(\bu_h,\bv)-b_\gamma(\pt{\bv},q^\ell)\\
& = \int_{\gamma} {\bm F}_h \cdot \pt{\bv} - a_\gamma(\pt{\bu}_h,\pt{\bv}) -b_\gamma(\pt{\bv},q^\ell) +G_h(\bu_h,\bv)\\
%
%
& = \int_\gamma {\bm f}\cdot (\pt{\bv}-\pt{\bv}_c)+\int_{\gamma} ({\bm F}_h -{\bm f})\cdot \pt{\bv} +a_\gamma(\bu - \pt{\bu}_h,\pt{\bv}) +b_\gamma(\pt{\bv},p-q^\ell) +G_h(\bu_h,\bv)\\
&\qquad -a_\gamma(\pt{\bu},\pt{\bv}-\pt{\bv}_c)-b_\gamma(\pt{\bv}-\pt{\bv}_c,p)\\
&\lesssim \Big(h^2 \|{\bm f}\|_{L_2(\gamma)} +\|{\bm f}-{\bm F}_h\|_{L_2(\gamma)}
+\|\bu-\pt{\bu}_h\|_{H^1_h(\gamma)}+ \|p-q^\ell\|_{L_2(\gamma)}\\
&\qquad+ h\big(\|\pt\bu_h\|_{H^1_h(\gamma)}+\|\bu\|_{H^1(\gamma)}+\|p\|_{L_2(\gamma)}\big)\Big) \|\pt{\bv}\|_{H^1_h(\gamma)}.
\end{align*}
%
We conclude from the inf-sup condition \eqref{eqn:dInfSup}
and the estimates $\|\pt{\bu}_h\|_{H^1_h(\gamma)}\lesssim \|{\bm f}_h\|_{L_2(\Gamma_h)}$,
$\|\pt{\bv}\|_{H^1_h(\gamma)}\lesssim \|\bv\|_{H^1_h(\Gamma_h)}$
that
\begin{align*}
\|p-p_h^\ell \|_{L_2(\gamma)}
&\le \|p-q^\ell\|_{L_2(\gamma)}+\|p_h^\ell-q^\ell\|_{L_2(\gamma)}\\
&\lesssim \|p-q^\ell\|_{L_2(\gamma)}+\|p_h-q\|_{L_2(\Gamma_h)}\\
&\lesssim  h^2 \|{\bm f}\|_{L_2(\gamma)} +\|{\bm f}-{\bm F}_h\|_{L_2(\gamma)}
+\|\bu-\pt{\bu}_h\|_{H^1_h(\gamma)}+ \|p-q^\ell\|_{L_2(\gamma)}\\
&\qquad+ h\big(\|\bu\|_{H^1(\gamma)}+\|p\|_{L_2(\gamma)}+\|{\bm f}_h\|_{L_2(\gamma)}\big). 
\end{align*}
By taking the infimum over $q\in Q_h$ we obtain \eqref{eqn:phErrorAb}.
\end{proof}

\begin{remark}In order to obtain a final $O(h)$ energy error bound from \eqref{convergence} we must choose ${\bm f}_h$ so that $\|{\bm f}-{\bm F}_h\|_{L_2(\gamma)} \lesssim h$.  A short calculation shows that ${\bm f}_h=\calP_{\bp^{-1}} {\bm f}  $ yields $\|{\bm f}-{\bf F}_h\|_{L_2(\gamma)} \lesssim h^2$; a variety of other choices also yield optimal convergence.
\end{remark}

\begin{remark}  
Analysis of $L_2$ errors in the velocity is the subject of ongoing work.  
Numerical experiments presented below indicate that $\|\ipt \bu - \bu_h\|_{L_2(\Gamma_h)} \lesssim h^2$, as expected.  
However, the conforming approximation error estimate given in Lemma \ref{lem:H1_approx} {seems} insufficient 
to obtain an $O(h^2)$ convergence rate in $L_2$.  In addition, the $O(h)$ geometric error estimate in Lemma \ref{lem:DefRelation} is sufficient to establish optimal $O(h)$ convergence in the energy norm, but not an optimal $O(h^2)$ $L_2$ convergence rate.  Obtaining $O(h^2)$ geometric error estimates sufficient to achieve optimal $L_2$ convergence is likely possible using techniques introduced in \cite{HansboLarsonLarsson20} but is significantly more technical than the energy case analyzed here.  
\end{remark}

\section{Numerical Experiments}\label{sec-numerics}

In this section we briefly comment on 
{the implementation of the finite element method \eqref{eqn:FEM}}
and then present numerical experiments demonstrating optimal convergence rates in the energy and $L_2$ norms for both MINI and lowest-order Taylor-Hood elements.

\subsection{Implementation notes}

The main additional complication in the implementation of the surface MINI method as compared to the Euclidean case arises in choosing two individual degrees of freedom at each vertex and interpreting them on each incident element.   In the Euclidean case it is natural to choose individual degrees of freedom to align with the canonical Euclidean basis vectors.  Thus natural global basis functions corresponding to a vertex $a$ are $(\varphi_a,0)^{\top} $ and $(0, \varphi_a)^{\top}$, with $\varphi_a$ the usual affine hat function corresponding to $a$.  In the surface case there is no such natural choice, so at each vertex $a \in \calV_h$ one must first fix the master element $K_a$ along with two arbitrary but mutually orthogonal unit vectors $\bv_{1,a,K_a}$ and $\bv_{2,a,K_a}$ tangent to $K_a$.   The two global degrees of freedom corresponding to $a$ are then $\bu_h|_{K_a}(a) \cdot \bv_{1,a,K_a}$ and $\bu_h|_{K_a} (a) \cdot \bv_{2,a,K_a}$.  

Once $K_a$, $\bv_{1,a,K_a}$, and $\bv_{2,a,K_a}$ are fixed, the Piola transform formula \eqref{eqn:calMDef} is used to interpret these quantities appropriately on each element $K \ni a$.  These bookkeeping steps are naturally implemented as a precomputation in which the necessary information is encoded into a DOF handler structure.  The precomputation step costs $O(\# \calV_h)$ and does not add significantly to the overall computational cost.  Once this step is completed the rest of the FEM is implemented in a standard way, but using the DOF handler to correctly compute basis functions on each element.  

We now describe more precisely some elements of the precomputation step.
\begin{figure}[h]
\begin{center}
\includegraphics[scale=.25]{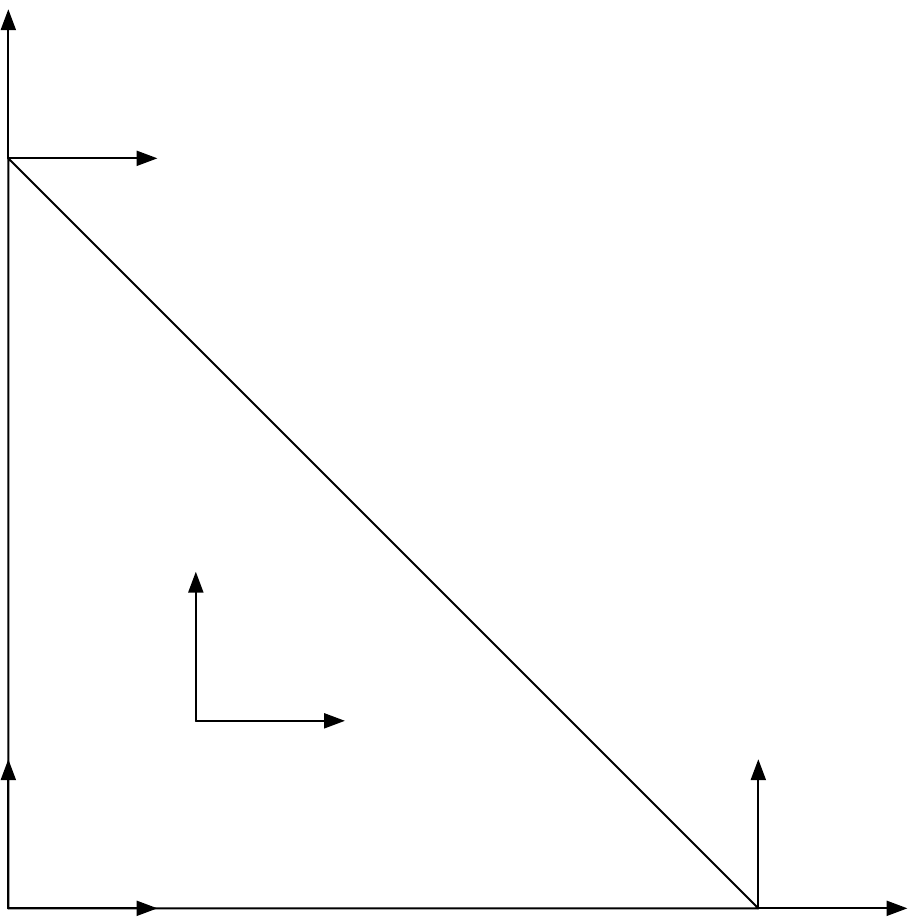}
\put(-23,-8){\parbox[b]{2cm}{$\hat{z}_{2}$}}
\put(-112,-8){\parbox[b]{2cm}{$\hat{z}_{1}$}}
\put(-120,90){\parbox[b]{2cm}{$\hat{z}_{3}$}}
\end{center}
\caption{Degrees of freedom for the reference MINI element}
\label{fig_n1}
\end{figure}
Consider the reference element $\hat{K}$  with associated natural degrees of freedom for the MINI element  (cf.~Figure \ref{fig_n1}). Given a vertex ${\hat z_j} \in \hat{K}$, let {$\hat{\bm \phi}_{1,j}$} and $\hat {\bm \phi}_{2,j}$ be the basis functions corresponding to the vertex degrees 
of freedom in Figure \ref{fig_n1}, i.e., $\hat{\bm \phi}_{1,j}(\hat{z}_i)= \begin{pmatrix} \delta_{ij} \\ 0 \end{pmatrix}$ and $\hat{\bm \phi}_{2,j} (\hat{z}_i)= \begin{pmatrix} 0 \\ \delta_{ij} \end{pmatrix}$.   We translate vertex degrees of freedom from the reference element to physical elements as follows.   For each vertex $a \in \calV_h$:
\begin{enumerate}
 \item Specify a master element $K_a \ni a$.
\item Choose arbitrary unit orthogonal vectors $\bv_{1,a,K_a}$, $\bv_{2,a,K_a}$ lying in the plane containing $K_a$.  
\item For each triangle {$K\in \calT_a$}, compute $\bv_{i,a,K}=\calM_a^K \bv_{i,a,K_a}$, $i=1,2$.
\item For each $K \in \calT_a$, let $j_K \in \{1,2,3\}$ be the local numbering of $a$ in $K$.  
\item For each {$K\in \calT_a$}, let $\calP_{F_K}$ be as in \eqref{ref_piola}.  Solve for {$\alpha_{i,\ell,a,K}$}, $i,\ell\in \{1,2\}$, such that ${\alpha_{i,1,a,K} } \calP_{F_K} \hat {\bm \phi}_{1,j_K}(\hat z_{j_K}) + {\alpha_{i,2,a,K}} \calP_{F_K} \hat {\bm \phi}_{2,j_K}(\hat z_{j_K})=\bv_{i,a,K}$. With degrees of freedom as pictured in Figure \ref{fig_n1}, this expression reduces to the linear system 
\[
\calP_{F_K} \balpha_{i,a,K} = \bv_{i,a,K}\quad \text{with}\quad \balpha_{i,a,K} = 
\begin{pmatrix}
\alpha_{i,1,a,K}\\
\alpha_{i,2,a,K}
\end{pmatrix}.
\]
This system is solved by application of the Moore-Penrose psuedoinverse $(\calP_{F_K}^\intercal \calP_{F_K})^{-1} \calP_{F_K}^{\intercal}.$
\end{enumerate}

The coefficients $\alpha_{i,\ell, a,K}$ serve as a ``Rosetta stone'' (or DOF handler) to translate the individual 
reference basis functions $\hat {\bm \phi}_{i,j}$ elementwise to global basis functions.  
On the element $K$ the global basis functions corresponding to the vertex $a \in K$ are concretely given  by $\varphi_a(\bx) \mathcal{P}_{F_K} \balpha_{1,a,K}$ and $\varphi_a (\bx)\mathcal{P}_{F_K} \balpha_{2,a,K}$, with $\varphi_a$ the standard scalar affine hat function corresponding to $a$.  Recall that in the Euclidean case natural global basis functions corresponding to $a$ are $\varphi_a(\bx) (1,0)^{\intercal}$ and $\varphi_a(\bx) (0,1)^{\intercal}$.  Thus in both the Euclidean and surface cases, the global MINI basis functions may be expressed as the product of a scalar hat function and a vector specifying direction.  However, in the surface case the vectors $\mathcal{P}_{F_K} \balpha_{i,a,K}$ in question are piecewise constant rather than globally constant in order to reflect variation of the tangent plane from element to element.  Once these expressions for global basis functions are in hand, the other aspects of the finite element code are essentially standard.  Note that sparsity patterns for the resulting system matrices are also similar to the Euclidean case, and the system solve generally has similar expense.

\subsection{Numerical results}

We take $\gamma$ to be the ellipsoid given by $\Psi(x,y,z):=\frac{x^2}{1.1^2} + \frac{y^2}{1.2^2} + \frac{z^2}{1.3^2}=1$.  The test solution is 
{$\bu=\bPi (-z^2, x, y)^\intercal$}; cf.~Figure \ref{fig_n2}.  Note that $\bPi={\bf I}-\bnu\otimes \bnu$ with $\bnu=\frac{\nabla \Psi}{|\nabla \Psi |}$ on $\gamma$, so $\bu$ is componentwise a rational function and not a polynomial.  The pressure is $p=xy^3+z$.  The incompressibility condition ${\rm div}_\gamma \bu=0$ does not hold, so the Stokes system must be solved with nonzero divergence constraint.  We employed a MATLAB code built on top of the iFEM library \cite{Ch09PP}.  
\begin{figure}[h]
\begin{center}
\includegraphics[scale=.4]{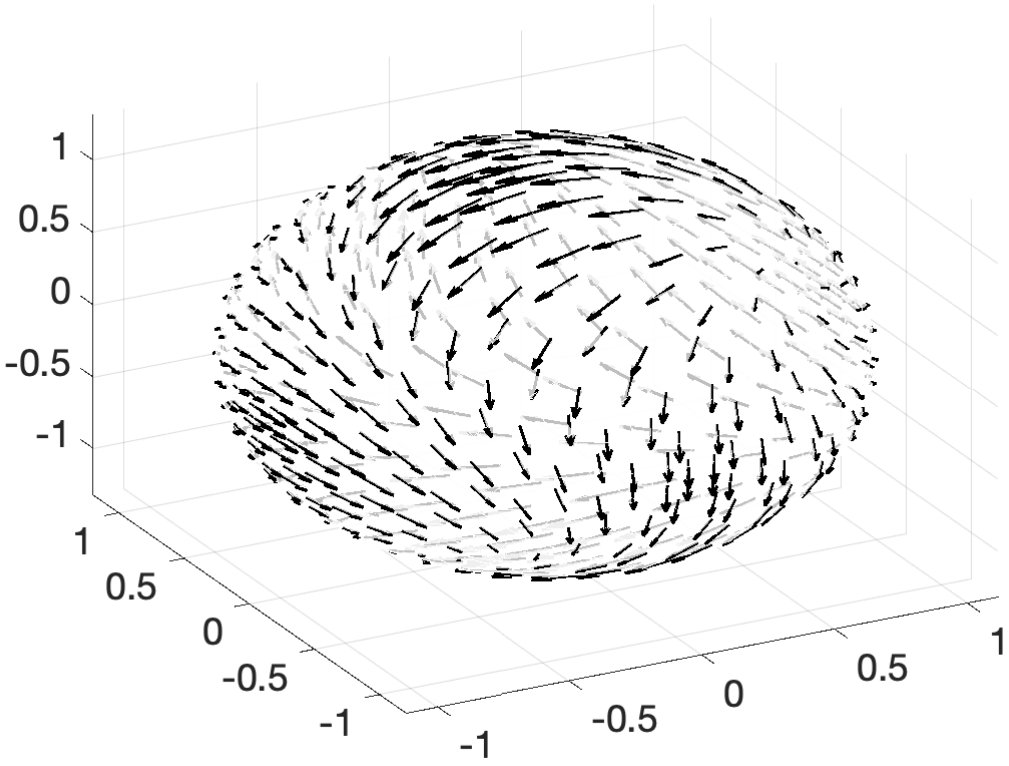}
\end{center}
\caption{Test solution $\bu$}
\label{fig_n2}
\end{figure}

The left plot in Figure \ref{fig_n3} depicts the convergence history for the MINI element on a sequence of uniformly refined meshes.  Optimal convergence is clearly observed in both the energy and $L_2$ norms, in particular $O(h)$ for the energy norm $\|\ipt \bu -\bu_h\|_{H_h^1(\Gamma_h)} + \|p^e -p_h\|_{L_2(\Gamma_h)}$ along with $O(h^2)$ for the error $\|\ipt \bu-\bu_h\|_{L_2(\Gamma_h)}$.  Recall also that the pressure is approximated by affine functions, which can in theory approximate to order $h^2$ in $L_2$. Convergence is generally restricted instead to order $h$ because the pressure is coupled to the velocity $H^1$ norm in the error analysis, but superconvergence of order $h^{3/2}$  may occur on sufficiently structured meshes \cite{ETX11}.  We observe an initial superconvergent decrease of order $h^{3/2}$ or higher, but the expected asymptotic rate of order $h$ is eventually seen; cf.~\cite{PS22} for discussion of similar phenomena in the Euclidean context.  

\begin{figure}[h]
\begin{center}
\includegraphics[scale=.2]{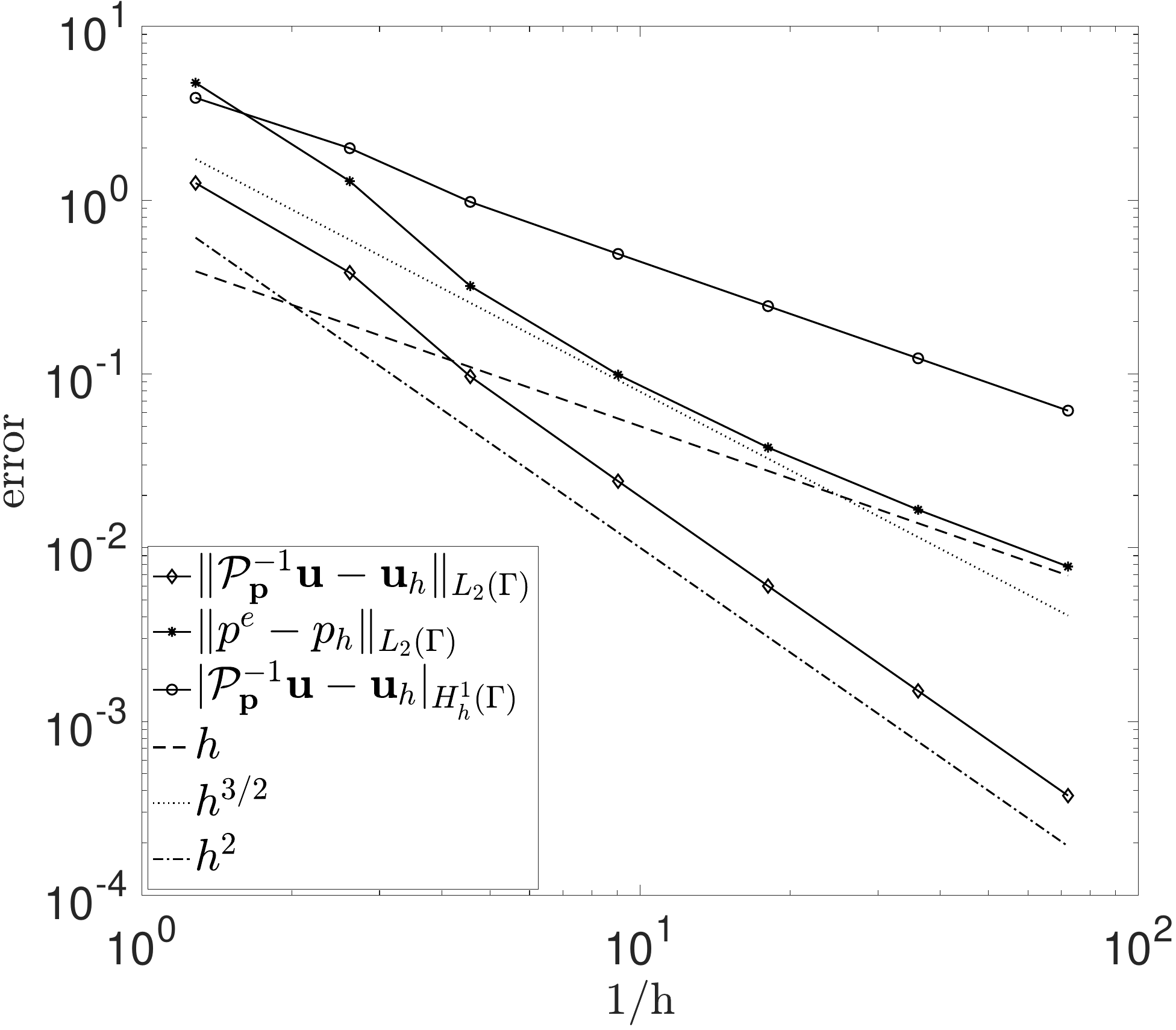}
\includegraphics[scale=.2]{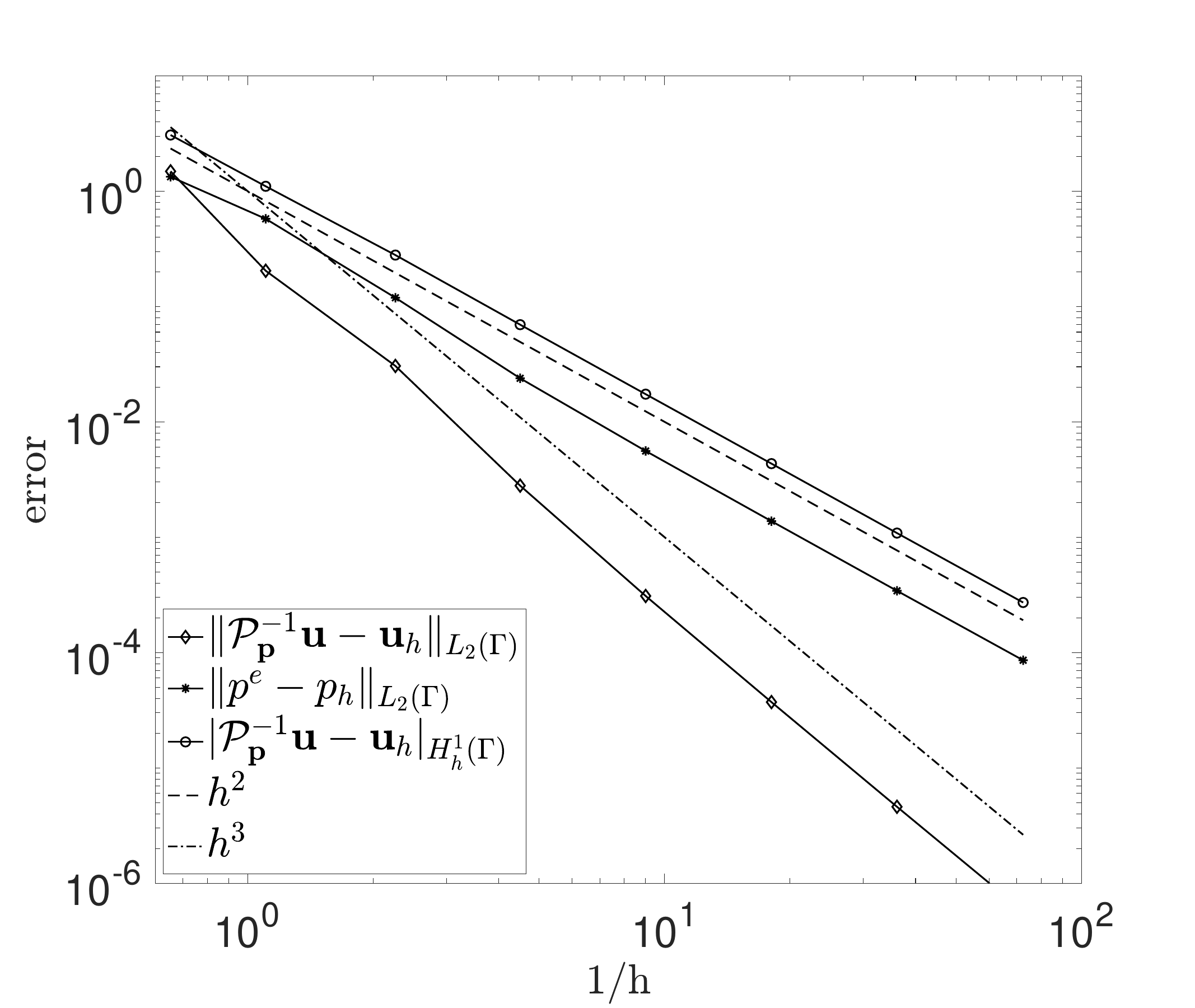}
\end{center}
\caption{Convergence for the MINI element (left) and $\mathbb{P}^2-\mathbb{P}^1$ Taylor-Hood element (right)}
\label{fig_n3}
\end{figure}

We also approximated $(\bu, p)$ using a $\mathbb{P}^2-\mathbb{P}^1$ Taylor-Hood method.  The discrete surface $\Gamma_h$ was taken to be a quadratic rather than affine approximation to $\gamma$ in order to obtain a geometric error commensurate with the expected order of convergence for this element.  Vertex degrees of freedom were defined as above, additionally taking into account the fact that the surface normal on a piecewise quadratic surface is, in contrast to the case of an affine surface, not elementwise constant.  Quadratic Taylor-Hood vector fields have degrees of freedom at edge midpoints in addition to at vertices, and these were defined in a manner completely analogous to the vertex degrees of freedom.  Because
the Piola transform preserves normal continuity, this construction guarantees normal continuity at three points on each (closed) 
edge, thus ensuring $\bH({\rm div}_{\Gamma_h};\Gamma_h)$-conformity (cf.~Proposition \ref{prop:Hdiv}). 
The right plot in Figure \ref{fig_n3} exhibits the expected $O(h^2)$ convergence in the energy norm and $O(h^3)$ convergence for the $L_2$ error in the velocity.  This confirms that our methodology has applicability beyond the MINI element; error analysis and extension to other stable Stokes element pairs employing nodal degrees of freedom will be the subject of future work.  

\appendix \section{Proof of Lemma \ref{lem:DefRelation}}

\begin{proof}
We divide the proof of Lemma \ref{lem:DefRelation} into three steps.

{\em Step 1:}
For a scalar function $q$ defined on $\Gamma_h$, we have the identity \cite[(2.2.19)]{DemlowDziuk07}
\[
\nab_\gamma (q\circ \bp^{-1}) = \big([{\bf I}-d{\bf H}]^{-1} [{\bf I} - \frac{\bnu_h\otimes \bnu}{\bnu_h\cdot \bnu}] \nab_{\Gamma_h} q\big)\circ \bp^{-1}\qquad \text{on }\gamma.
\]
Consequently, for $\bv = (v_1,v_2,v_3)^\intercal  \in \bH_T^1(K)$,
\begin{align*}
(\nab (\bv\circ \bp^{-1})\bPi)_{i,:} 
&= \big(\nab_\gamma (v_i\circ \bp^{-1})\big)^\intercal\\  
&= \Big(\big([{\bf I}-d{\bf H}]^{-1} [{\bf I} - \frac{\bnu_h\otimes \bnu}{\bnu_h\cdot \bnu}] \nab_{\Gamma_h} v_i\big)\circ \bp^{-1}\Big)^\intercal\\
&= \Big((\nab_{\Gamma_h} v_i)^\intercal [{\bf I} - \frac{\bnu_h \otimes \bnu}{\bnu_h\cdot \bnu}]^\intercal 
[{\bf I}-d{\bf H}]^{-\intercal}\Big)\circ \bp^{-1}\\
&= \Big((\nab_{\Gamma_h} v_i)^\intercal [{\bf I} - \frac{\bnu\otimes \bnu_h}{\bnu\cdot \bnu_h}] 
[{\bf I}-d{\bf H}]^{-1}\Big)\circ \bp^{-1}\\
&= \Big((\nab \bv \bPi_h)_{i,:} [{\bf I} - \frac{\bnu\otimes \bnu_h}{\bnu\cdot \bnu_h}] 
[{\bf I}-d{\bf H}]^{-1}\Big)\circ \bp^{-1}.
\end{align*}
Here, with an abuse of notation, we have suppressed the superscript for the extension $\bv^e$.
Thus, we have the identity
\begin{align}\label{eqn:ChainRULE}
\nab (\bv\circ \bp^{-1}) \bPi = \Big(\nab \bv\bPi_h [{\bf I} - \frac{\bnu\otimes \bnu_h}{\bnu\cdot \bnu_h}] 
[{\bf I}-d{\bf H}]^{-1}\Big)\circ \bp^{-1}.
\end{align}
Since $\bv$ is tangential, there holds $\nab \bv= \nab (\bPi_h \bv) = \bPi_h \nab \bv$,
because $\bPi_h$ is  constant on $K$.  Thus,
\begin{align}\label{eqn:nabNabh}
\nab (\bv\circ \bp^{-1}) \bPi = \Big(\nab_{\Gamma_h} \bv [{\bf I} - \frac{\bnu\otimes \bnu_h}{\bnu\cdot \bnu_h}] 
[{\bf I}-d{\bf H}]^{-1}\Big)\circ \bp^{-1}.
\end{align}

{\em Step 2:} Write $\pt \bv = \calP_{\bp} \bv  = ({\bf L} \bv)\circ \bp^{-1}$
with ${\bf L} = {\mu_h^{-1}} [\bPi - d{\bf H}]$.  We then have by \eqref{eqn:nabNabh},
\begin{align*}
\nab_\gamma \pt{\bv} 
&= \bPi \nab \pt{\bv} \bPi = \bPi \nab ({\bf L} \bv\circ \bp^{-1}) \bPi\\
& = \bPi {\bf L} \nab (\bv \circ \bp^{-1}) \bPi + \bPi \nab {\bf L} \bv \circ \bp^{-1} \bPi\\
& = \bPi {\bf L}  \left(\nab_{\Gamma_h} \bv \left[{\bf I} - \frac{\bnu\otimes \bnu_h}{\bnu\cdot \bnu_h}\right] 
[{\bf I}-d{\bf H}]^{-1}\right)\circ \bp^{-1}+ \bPi \nab {\bf L} \bv \circ \bp^{-1} \bPi\\
& =  {\bf L}  \left(\nab_{\Gamma_h} \bv \left[{\bf I} - \frac{\bnu\otimes \bnu_h}{\bnu\cdot \bnu_h}\right] 
[{\bf I}-d{\bf H}]^{-1}\right)\circ \bp^{-1}+ \bPi \nab ({\bf L}\circ \bp^{-1}) \bv \circ \bp^{-1} \bPi,
\end{align*}
where
\begin{equation}\label{eqn:GradMatrix}
(\nab {\bf L} \bv)_{i,j} = \sum_{k=1}^3 \frac{\p {\bf L}_{i,k}}{\p x_j} v_k\qquad i,j=1,2,3.
\end{equation}
We conclude, by adding and subtracting terms, that
\begin{align*}
\nab_\gamma \pt{\bv} 
& = (\nab_{\Gamma_h} \bv) \circ \bp^{-1} + [{\bf L}-\bPi_h] (\nab_{\Gamma_h} \bv)\circ \bp^{-1} \left[{\bf I} - \frac{\bnu\otimes \bnu_h}{\bnu\cdot \bnu_h}\right] [{\bf I} - d{\bf H}]^{-1}\\
&\qquad + (\nab_{\Gamma_h} \bv)\circ \bp^{-1} \left(\left[{\bf I} - \frac{\bnu\otimes \bnu_h}{\bnu\cdot \bnu_h}\right] [{\bf I} - d{\bf H}]^{-1}-\bPi_h\right)+ \bPi \nab ({\bf L}\circ \bp^{-1}) \bv \circ \bp^{-1} \bPi.
\end{align*}

Using $|\bnu-\bnu_h|\lesssim h$, 
$|d|\lesssim h^2$, and \eqref{eqn:muBound}, we have
$|{\bf L}-\bPi_h|\lesssim h$ and 
$|[{\bf I}- \frac{\bnu_h\otimes \bnu}{\bnu\cdot \bnu_h}][{\bf I} - d {\bf H}]^{-1} - \bPi_h|\lesssim h$.
Therefore there holds
\begin{align}\label{eqn:DefDefPre}
|{\rm Def}_{\gamma} \pt \bv-({\rm Def}_{\Gamma_h} \bv)\circ \bp^{-1}|\lesssim  h |(\nab_{\Gamma_h} \bv)\circ \bp^{-1}|
+ |\bPi \nab ({\bf L}\circ \bp^{-1}) \bv\circ \bp^{-1} \bPi|.
\end{align}

{\em Step 3:} In the final step of the proof, we bound 
 the last term in \eqref{eqn:DefDefPre}.
 
Let $\bell^{(r)} = {\bf L}_{:,r}$ denote the $r$th column of ${\bf L}$.
Then \eqref{eqn:GradMatrix} and a short calculation yields
\[
\bPi \nab ({\bf L}\circ \bp^{-1}) \bv\circ \bp^{-1} \bPi =  \sum_{r=1}^3 (\bPi \nab (\bell^{(r)}\circ \bp^{-1}) \bPi) v_r\circ \bp^{-1},
\]
and so, by \eqref{eqn:ChainRULE},
\begin{equation}
\label{eqn:LChain}
\begin{split}
\bPi \nab ({\bf L}\circ \bp^{-1}) \bv\circ \bp^{-1} \bPi 
&= \sum_{r=1}^3 \bPi \left(\nab \bell^{(r)} \bPi_h \left[{\bf I}-\frac{\bnu\otimes \bnu_h}{\bnu\cdot \bnu_h}\right]\big[{\bf I}- d {\bf H}\big]^{-1}v_r\right)\circ \bp^{-1}  \\
&=  \bPi \left(\nab {\bf L} \bv\bPi_h \left[{\bf I}-\frac{\bnu\otimes \bnu_h}{\bnu\cdot \bnu_h}\right]\big[{\bf I}- d {\bf H}\big]^{-1}\right)\circ \bp^{-1}  .
\end{split}
\end{equation}

Taking the derivative of ${\bf L}_{i,k} = \mu^{-1}_h [\bPi_{i,k} - d {\bf H}_{i,k}]$ yields
\begin{align*}
\frac{\p {\bf L}_{i,k}}{\p x_j} 
&= \mu_h^{-1} \left(-{\bf L}_{i,k} \frac{\p \mu_h }{\p x_j} + \frac{\p \bPi_{i,k}}{\p x_j}- \frac{\p d}{\p x_j} {{\bf H}_{i,k}}- d \frac{\p {\bf H}_{i,k}}{\p x_j}\right)\\
%
&= -\mu_h^{-1} \left({\bf L}_{i,k} \frac{\p \mu_h}{\p x_j} +\nu_i {\bf H}_{k,j} +\nu_k {\bf H}_{i,j}+\nu_j {\bf H}_{i,k}+d \frac{\p {\bf H}_{i,k}}{\p x_j}\ \right).
\end{align*}
Thus by \eqref{eqn:muBound} and \eqref{eqn:GradMatrix}, there holds
\begin{equation}\label{eqn:nabL}
\begin{split}
\nab {\bf L} \bv 
&= - \mu^{-1}_h\left[ ({\bf L} \bv)\otimes \nab \mu_h + \bnu \otimes ({\bf H} \bv)+({\bf H} \bv)\otimes \bnu + (\bnu\cdot \bv) {\bf H} + d \nab {\bf H}\bv\right]\\
&= -\left[ ({\bf L} \bv)\otimes \nab \mu_h + \bnu \otimes ({\bf H} \bv)+({\bf H} \bv)\otimes \bnu\right]+O(h|\bv|). 
\end{split}
\end{equation}

Write $\mu_h = \bnu\cdot \bnu_h (1-d \kappa_1)(1-d \kappa_2) = \bnu\cdot \bnu_h \det({\bf I} - d{\bf H})$.
Because $\bnu_h$ is constant on $K$ and ${\bf H} \bnu=0$, there holds $\frac{\p (\bnu\cdot \bnu_h)}{\p x_k} = \bnu_h \cdot \frac{\p \bnu}{\p x_k} = ({\bf H} \bnu_h)_k = ({\bf H}(\bnu_h-\bnu))_k =  O(h)$.
Also by Jacobi's formula and $|d| \lesssim h^2$,
\begin{align*}
\frac{\p }{\p x_k} \det({\bf I}- d {\bf H}) 
&= \det({\bf I}-d {\bf H}){\rm tr}\left(({\bf I} - d {\bf H})^{-1} \frac{\p }{\p x_k} \big({\bf I} - d {\bf H}\big)\right)
 = -\nu_k {\rm tr}({\bf H}) +O(h^2).
\end{align*}
We then conclude using $|1-\bnu \cdot \bnu_h| \lesssim h^2$ that 
\begin{equation}\label{eqn:nabmu}
\nab \mu_h= - (\bnu\cdot \bnu_h)\bnu {\rm tr}({\bf H}) + O(h) =  - \bnu {\rm tr}({\bf H}) + O(h).
\end{equation}
Combining \eqref{eqn:nabL}--\eqref{eqn:nabmu}
yields
\begin{align}\label{eqn:AlmostDone}
\nab {\bf L} \bv 
&= \left[{\rm tr}({\bf H}) ({\bf L} \bv)\otimes  \bnu -\bnu \otimes ({\bf H} \bv)-({\bf H} \bv)\otimes \bnu\right]+O(h|\bv|).
\end{align}

We apply \eqref{eqn:AlmostDone} to \eqref{eqn:LChain}
along with the identity
$\bPi \bnu = \bPi^\intercal \bnu = 0$
and $|\bPi-\bPi_h|\lesssim h$ to obtain
\begin{align*}
|\bPi \nab ({\bf L}\circ \bp^{-1}) \bv\circ \bp^{-1} \bPi|\lesssim h|\bv\circ \bp^{-1}|.
\end{align*}
Combining this with \eqref{eqn:DefDefPre} yields the desired estimate
\begin{align*}
|{\rm Def}_{\gamma} \pt \bv-({\rm Def}_{\Gamma_h} \bv)\circ \bp^{-1}|\lesssim  h \big(|(\nab_{\Gamma_h} \bv)\circ \bp^{-1}|+ |\bv\circ \bp^{-1}|\big).
\end{align*}
\end{proof}


\section*{Acknowledgements} The authors thank Orsan Kilicer for assistance with numerical computations.

\bibliographystyle{siam}
\bibliography{stability_arXiv.bib}

\end{document}